\documentclass[11pt,a4paper]{amsart}
\usepackage[a4paper,margin=1.1in]{geometry}
\usepackage{amssymb,mathtools}
\usepackage{enumitem}
\usepackage{cite}
\usepackage{booktabs}
\usepackage[expansion=false]{microtype}
\usepackage{hyperref}
\allowdisplaybreaks
\hypersetup{colorlinks=true,linkcolor=blue,urlcolor=blue}
\theoremstyle{plain}
\newtheorem{theorem}{Theorem}[section]
\newtheorem{proposition}[theorem]{Proposition}
\newtheorem{lemma}[theorem]{Lemma}
\newtheorem{corollary}[theorem]{Corollary}
\theoremstyle{definition}
\newtheorem{definition}[theorem]{Definition}
\newtheorem{assumption}[theorem]{Assumption}
\theoremstyle{remark}
\newtheorem{remark}[theorem]{Remark}
\newtheorem{example}[theorem]{Example}

\numberwithin{equation}{section}

\newcommand{\R}{\mathbb{R}}
\newcommand{\eps}{\varepsilon}
\newcommand{\Lcal}{\mathcal{L}}
\newcommand{\Hp}{\mathcal{H}_{p,h}}
\newcommand{\norm}[1]{\left\lVert #1\right\rVert}
\newcommand{\dd}{\,\mathrm{d}}
\newcommand{\supp}{\operatorname{supp}}
\newcommand{\one}{\mathbf{1}}
\newcommand{\CC}{d_X}
\newcommand{\BX}{B_X}

\newenvironment{inwords}{\begin{quote}\small\textbf{In words.}\ }{\end{quote}}
\newenvironment{idea}{\begin{quote}\small\textbf{Idea.}\ }{\end{quote}}
\newenvironment{roadmap}{\begin{quote}\small\textbf{Plan of the proof.}\ }{\end{quote}}

\begin{document}

\title[Sharp Lifespan Estimates]{Sharp Lifespan Estimates for Heat Equations with H\"ormander Operators}

\author{Michael Ruzhansky}
\author{Berikbol T. Torebek$^{*}$}
\thanks{$^{*}$Corresponding author.}

\address{Michael Ruzhansky\\
Department of Mathematics: Analysis, Logic and Discrete Mathematics\\
Ghent University, Belgium\\
and School of Mathematical Sciences\\
Queen Mary University of London, United Kingdom}
\email{michael.ruzhansky@ugent.be}

\address{Berikbol T. Torebek\\
Institute of Mathematics and Mathematical Modeling\\
28 Shevchenko str., 050010 Almaty, Kazakhstan\\
and SDU University, Kaskelen, Kazakhstan}
\email{torebek@math.kz}

\subjclass[2020]{35K58, 35B44, 35B33, 35H20, 35K08}
\keywords{Fujita exponent; lifespan; H\"ormander vector fields;
time-dependent reaction; blow-up; heat kernel}

\begin{abstract}
In this paper, we study the Cauchy problem $u_t-\Lcal u=h(t)u^p$,
$u(0)=\eps u_0\ge0$, where $p>1$, $h\ge0$ is only locally integrable, and
$\Lcal=\sum_{j=1}^mX_j^2$ is a homogeneous H\"ormander sum of squares on
$\R^n$ of homogeneous dimension $q$. Diffusion spreads the solution out and
weakens the nonlinearity, while the reaction $h(t)u^p$ may force blow-up. We
prove that the outcome is decided by the single quantity
$\Hp(T)=\int_0^Th(t)(1+t)^{-\frac q2(p-1)}\,\dd t$, in which the reaction at
time $t$ is weighted by the factor by which diffusion has weakened the
solution: small data are global if $\Hp(\infty)<\infty$, while otherwise the
lifespan $T_\eps$ satisfies
$c\,\eps^{-(p-1)}\le\Hp(T_\eps)\le C\,\eps^{-(p-1)}$ for small $\eps$. Our
technique of proof is based on a backward-semigroup functional combined with
a Gaussian-envelope energy and an ODE cap telescoped along interlaced
geometric grids. The result
contains the Euclidean, Heisenberg, Carnot and Grushin cases, yields explicit lifespan
laws for power, logarithmic, exponential, periodic and intermittent
coefficients, and exhibits effects with no autonomous counterpart, such as a
lifespan with no power law in $\eps$. We also prove regularity and a priori
estimates for classical solutions and discuss what remains true for a general
nonlinearity $f(u)$.
\end{abstract}

\maketitle

\section{Introduction}\label{sec:intro}

In this paper, we study the Cauchy problem for the semilinear heat equation
with a time-dependent reaction
\begin{equation}\label{eq:main}
 \begin{cases}
 u_t-\Lcal u=h(t)u^p, & (t,x)\in(0,T)\times\R^n,\\
 u(0,x)=\eps u_0(x), & x\in\R^n,
 \end{cases}
 \qquad \Lcal=\sum_{j=1}^mX_j^2,
\end{equation}
where $p>1$, $\eps>0$ measures the size of the data, and
\begin{equation}\label{eq:h-ass}
 h\in L^1_{\rm loc}([0,\infty)),\quad h\ge0,
 \qquad
 0\le u_0\in L^1(\R^n)\cap L^\infty(\R^n),\quad u_0\not\equiv0 .
\end{equation}

Two effects compete in \eqref{eq:main}. The diffusion $\Lcal$ spreads the
solution over larger and larger regions and thereby lowers its height, which
weakens the nonlinear term $u^p$. The reaction $h(t)u^p$ works in the opposite
direction: it feeds on the solution and, if it acts for long enough, forces it
to blow up in finite time. The \emph{lifespan} $T_\eps\in(0,\infty]$ is the
largest time up to which the solution exists. The question of this paper is:
\emph{how does $T_\eps$ depend on $\eps$ and on the coefficient $h$ when $\eps$
is small?}

The operator $\Lcal$ is built from smooth vector fields $X_1,\dots,X_m$ on $\R^n$
that are homogeneous of degree one with respect to non-isotropic dilations
\begin{equation}\label{eq:dil}
 \delta_\lambda(x)=(\lambda^{\sigma_1}x_1,\dots,\lambda^{\sigma_n}x_n),
 \qquad 1=\sigma_1\le\dots\le\sigma_n,
\end{equation}
and that satisfy H\"ormander's rank condition (see Assumption~\ref{ass:H}
below). The number $q=\sigma_1+\dots+\sigma_n$ is the \emph{homogeneous
dimension}; it replaces the dimension $n$ in every scaling law (for
$\Lcal=\Delta$ one has $q=n$). No group law on $\R^n$ is assumed.

\begin{assumption}[Homogeneous H\"ormander structure]\label{ass:H}
$X_1,\dots,X_m$ are linearly independent smooth real vector fields on $\R^n$
such that \textup{(H1)} each $X_j$ is homogeneous of degree one with respect
to \eqref{eq:dil}, i.e.\ $X_j(f\circ\delta_\lambda)=\lambda(X_jf)\circ\delta_\lambda$;
\textup{(H2)} $\operatorname{rank}\mathrm{Lie}\{X\}(0)=n$.
\end{assumption}

In words, (H1) says that $\Lcal$ has the same scaling structure as the
Laplacian, with $x_i$ scaled by $\lambda^{\sigma_i}$ and time by $\lambda^2$;
this is what makes the heat kernel of $\Lcal$ behave like $t^{-q/2}$ on the
diagonal. (H2) is the rank condition, which makes $\Lcal$ hypoelliptic even
though it need not be elliptic. Examples are the Laplacian, the sub-Laplacians
of the Heisenberg group and of all Carnot groups, and the Grushin and Martinet
operators (Subsection~\ref{subsec:examples}).

Let $T_\eps\in(0,\infty]$ be the maximal time of existence of the mild
solution (Definition~\ref{def:mild}; a mild solution is a solution of the
Duhamel formula \eqref{eq:mild}, in which the solution is the linear heat flow
of the data plus the heat flow of the nonlinear term). Throughout, we write
\[
 \alpha:=\tfrac q2(p-1),\qquad
 \Hp(T):=\int_0^Th(t)(1+t)^{-\alpha}\,\dd t,\quad T\in[0,\infty].
\]
We shall see that $\Hp$ is the natural ``clock'' of the problem;
Subsection~\ref{subsec:heuristic} explains why.

\smallskip
\noindent\textbf{Notation.} $\norm\cdot_k$ is the norm of $L^k(\R^n)$,
$1\le k\le\infty$; $P_t=e^{t\Lcal}$ is the heat semigroup and $\Gamma_t(x,y)$ its
kernel; $\CC$ is the Carnot--Carath\'eodory distance of the fields
$X_1,\dots,X_m$; $f\asymp g$ means $c\le f/g\le C$; and $c,C$ denote positive
constants that depend only on $X$, $p$ and $u_0$ unless stated otherwise.

\subsection{Historical background}

The literature on problems of type \eqref{eq:main} falls into three groups.

\subsubsection{The classical heat equation}
For $h\equiv1$ and $\Lcal=\Delta$ the problem goes back to Fujita
\cite{Fujita1966}, who showed that $p_F=1+2/n$ separates blow-up of all
nontrivial solutions from the existence of small global solutions; the
critical case belongs to Hayakawa \cite{Hayakawa1973}, Sugitani
\cite{Sugitani1975} and Kobayashi--Sirao--Tanaka
\cite{KobayashiSiraoTanaka1977}, and the lifespan was studied by Lee and Ni
\cite{LeeNi1992} (see also \cite{Weissler1981,Levine1990,GuiWang1995,
IkedaSobajima2019,QuittnerSouplet2019}).

\subsubsection{Time-dependent reactions}
For $h(t)\asymp(1+t)^\beta$ the threshold becomes $1+2(1+\beta)/n$
\cite{Meier1988,Meier1990}, and blow-up and global-existence criteria for more
general $h$ and $f(u)$ are given in \cite{LoayzaPaixao2014,MeglioliPunzo2026},
and for coupled systems with time-weighted sources in
\cite{CastilloLoayza2019}.

\subsubsection{Heisenberg group, Grushin and H\"ormander operators}
On the Heisenberg group the Fujita exponent was found by Zhang
\cite{Zhang1998} (see also \cite{Oka, Jleli1, Jleli2, Pascucci, PohozaevVeron2000}) and lifespan estimates for local solutions were obtained by
Georgiev--Palmieri \cite{GeorgievPalmieri2021}; see also
\cite{BorikhanovRuzhanskyTorebek2022} for the equation with a forcing term.
Ruzhansky--Yessirkegenov \cite{RuzhanskyYessirkegenov2022} studied existence and
non-existence of global solutions on sub-Riemannian manifolds and found the
Fujita exponent on connected unimodular Lie groups. For Grushin operators,
local well-posedness and global existence for small data were obtained in
\cite{OliveiraViana2024,Kogoj1}, while nonexistence of nontrivial global
solutions for $p\le1+2/q$ is contained in
\cite{ChatzakouKassymovRuzhansky2026} (see also
\cite{DAmbrosioLucente2003,FinoViana2026a}); lifespan estimates were obtained
in \cite{FinoKirane2026}, and problems with mixed local and nonlocal
nonlinearities were studied in \cite{FinoViana2026a}. The global heat-kernel theory of homogeneous
H\"ormander operators
\cite{BiagiBonfiglioli2017,BiagiBonfiglioli2024,BiagiBonfiglioliBramanti2022,
BiagiBramanti2023} allowed Chatzakou, Kassymov and Ruzhansky
\cite{ChatzakouKassymovRuzhansky2024,ChatzakouKassymovRuzhansky2026} to find
the Fujita exponent for homogeneous H\"ormander sums of squares and to give
necessary conditions for blow-up (and sufficient conditions for global
existence) for $\varphi(t)f(u)$; for $f(u)=u^p$ these involve the integral
$\int_1^\infty\varphi(t)t^{-\alpha}\dd t$.

\subsection{Motivation}

The results above are qualitative in $h$: they give conditions for blow-up or
for global existence, but not the lifespan $T_\eps$ as a function of the
amplitude $\eps$ of the data for a general coefficient $h$. This leads us to
the following questions.

\smallskip
\noindent\textbf{Question 1.} Is there a single quantity, built from $h$, $p$
and $q$, that determines the lifespan $T_\eps$ of small solutions from above
and from below, for \emph{every} $h\ge0$ in $L^1_{\rm loc}$, without any
regularity, monotonicity or doubling condition on $h$?

\smallskip
\noindent\textbf{Question 2.} Can the same quantity decide whether small data
produce global solutions, and what is the critical amplitude of the data?

\smallskip
\noindent\textbf{Question 3.} What do the resulting laws look like for explicit
coefficients (power, logarithmic, exponential, periodic, intermittent), which
effects have no autonomous counterpart, what can be said about classical
solutions (regularity and a priori estimates), and what remains true for a
general nonlinearity $f(u)$?

\smallskip
\noindent The main purpose of this paper is to answer these questions. We show
that the integral $\Hp$ above is, quantitatively, the exact clock of the
lifespan.

\subsection{Heuristic: why \texorpdfstring{$\Hp$}{H} is the clock}\label{subsec:heuristic}

Before stating the theorems we explain, without proof, where the quantity
$\Hp$ comes from. Switch the reaction off first. Then $u(t)=\eps P_tu_0$ has
total mass $\eps\norm{u_0}_1$ for all $t$, and by the decay estimate
\eqref{eq:L1Linfty} its height is about $\eps t^{-q/2}$. Mass is height times
volume, so the solution occupies a region of volume about $t^{q/2}$.

Now switch the reaction on, and ask how much mass it creates per unit time at
time $s$:
\[
 h(s)\int u(s)^p\dd x\ \approx\ h(s)\,\bigl(\eps s^{-q/2}\bigr)^{p}\,s^{q/2}
 =\eps^{p}\,h(s)\,s^{-\frac q2(p-1)}=\eps^p\,h(s)\,s^{-\alpha}.
\]
Compared with the initial mass, which is of order $\eps$, the \emph{relative}
mass created up to time $T$ is therefore of order
\[
 \eps^{p-1}\int_0^T h(s)\,s^{-\alpha}\dd s\ \approx\ \eps^{p-1}\,\Hp(T)
\]
(the shift $1+t$ in the definition of $\Hp$ only removes the singularity of
$s^{-\alpha}$ at $s=0$). This suggests the following picture.
\begin{itemize}[nosep,leftmargin=1.6em]
\item If $\eps^{p-1}\Hp(T)\ll1$, the reaction is a small perturbation of the
heat flow up to time $T$: the solution exists and decays like the linear flow.
\item If $\eps^{p-1}\Hp(T)\gtrsim1$, the reaction has created as much mass as
was there to begin with, the solution is no longer close to the linear flow, and
one expects blow-up.
\end{itemize}
The transition occurs when $\eps^{p-1}\Hp(T_\eps)\asymp1$, which is exactly the
law \eqref{eq:master} below. The weight $(1+s)^{-\alpha}$ expresses the fact
that a unit of reaction is worth less the later it acts, because by then the
heat flow has spread the solution out.

This heuristic is not a proof. It treats the solution as the linear flow at
all times, which is no longer true once the nonlinearity matters, and at the
critical power ($\alpha=1$ for $h\equiv1$) the contributions of all dyadic time
scales add up. Proving the upper bound for the lifespan therefore requires
summing the contributions of all scales, for an arbitrary $h$; this is the main
difficulty of the paper (see ``Method of proof'' below).

\subsection{Main results}

This subsection presents the main results of the paper. We begin with the
two-sided lifespan law and the critical amplitude (Theorems~\ref{thm:master}
and~\ref{thm:threshold}). We then turn to classical solutions: their
regularity (Theorem~\ref{thm:regularity}) and their a priori estimates
(Theorem~\ref{thm:apriori} and Corollary~\ref{cor:small}). Finally we list the
operators covered (Proposition~\ref{prop:examples}) and the explicit lifespan
laws that follow from the main theorem (Subsection~\ref{subsec:laws}). Each
statement is followed by a short summary in words.

\begin{theorem}\label{thm:master}
Suppose that Assumption~\ref{ass:H} and \eqref{eq:h-ass} hold. There are constants
$c_{\rm fp},C_{\rm up}>0$ and $T_1\ge1$, depending only on $X$, $p$ and $u_0$
\textup{(}not on $\eps$ or $h$\textup{)}, such that:
\begin{enumerate}[label=\textup{(\alph*)}]
\item if $\eps>0$, $T\in(0,\infty]$ and $\eps^{p-1}\Hp(T)\le c_{\rm fp}$,
then $T_\eps\ge T$ and
\begin{equation}\label{eq:X-bound}
 \sup_{0\le t<T}\bigl[\norm{u(t)}_1+(1+t)^{q/2}\norm{u(t)}_\infty\bigr]
 \le C\eps;
\end{equation}
\item for every $\eps>0$ and every $\tau\in[T_1,T_\eps)$,
\begin{equation}\label{eq:upper-large}
 \int_{T_1}^{\tau}h(t)\,t^{-\alpha}\,\dd t\le C_{\rm up}\,\eps^{-(p-1)},
 \quad\text{hence}\quad
 \Hp(\tau)\le\Hp(T_1)+C_{\rm up}\,\eps^{-(p-1)} .
\end{equation}
\end{enumerate}
Consequently, exactly one of the following holds.
\begin{enumerate}[label=\textup{(\roman*)}]
\item If $\Hp(\infty)<\infty$, then $T_\eps=\infty$ for all
$0<\eps\le(c_{\rm fp}/\Hp(\infty))^{1/(p-1)}$.
\item If $\Hp(\infty)=\infty$, then $T_\eps<\infty$ for every $\eps>0$, and
\begin{equation}\label{eq:master}
 c_{\rm fp}\,\eps^{-(p-1)}\le\Hp(T_\eps)\le2C_{\rm up}\,\eps^{-(p-1)}
 \qquad\text{for }0<\eps\le\eps_0 ,
\end{equation}
where $\eps_0:=(C_{\rm up}/\Hp(T_1))^{1/(p-1)}$ \textup{(}$\eps_0=\infty$ if $\Hp(T_1)=0$\textup{)}.
\end{enumerate}
\end{theorem}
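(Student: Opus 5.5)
Part (a) is a fixed-point argument in the weighted space
\[
X_T=\Bigl\{u:\ \|u\|_{X_T}:=\sup_{0\le t<T}\bigl[\|u(t)\|_1+(1+t)^{q/2}\|u(t)\|_\infty\bigr]<\infty\Bigr\},
\]
working in the positive cone, since $h\ge0$ and $u_0\ge0$. I would use two facts. First, the heat kernel of $\Lcal$ is Gaussian, with $\Gamma_t\ge0$ and $\int\Gamma_t(x,y)\dd y=1$, so $P_t$ is a contraction on $L^1$ and $L^\infty$. Second, the decay estimate $\|P_tf\|_\infty\le Ct^{-q/2}\|f\|_1$ from \eqref{eq:L1Linfty} holds. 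Together they give $\|\eps P_tu_0\|_{X_T}\le C_0\eps$.

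For the Duhamel term $N(u)(t)=\int_0^tP_{t-s}[h(s)u(s)^p]\dd s$, I interpolate $\|u(s)\|_p^p\le\|u(s)\|_\infty^{p-1}\|u(s)\|_1$. This yields $\|N(u)(t)\|_1\le \Hp(t)\|u\|^p_{X_T}$.

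For the $L^\infty$ weight I split the time integral at $s=t/2$:
\begin{itemize}[nosep,leftmargin=1.6em]
\item On $s<t/2$, I use the $L^1\to L^\infty$ decay with $(t-s)^{-q/2}\asymp(1+t)^{-q/2}$ when $t\ge1$; for small $t$ I use the trivial bound.
\item On $s>t/2$, I use $L^\infty$ contractivity and $\|u(s)\|_\infty^p\le(1+s)^{-\alpha-q/2}\|u\|^p$ with $(1+s)\asymp(1+t)$.
\end{itemize}
Both pieces give $(1+t)^{q/2}\|N(u)(t)\|_\infty\le C\Hp(t)\|u\|^p$. The difference estimate is analogous. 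So if $\eps^{p-1}\Hp(T)\le c_{\rm fp}$ is small, $u\mapsto\eps P_tu_0+N(u)$ contracts the ball of radius $2C_0\eps$. Combined with the blow-up alternative for mild solutions (Definition~\ref{def:mild}), this gives $T_\eps\ge T$ and \eqref{eq:X-bound}.

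Part (b) is the hard direction, and I would follow the route announced in the abstract. I fix $\tau<T_\eps$ and use the backward-semigroup functional
\[
F(t)=\int u(t,x)\,(P_{\tau-t}\phi)(x)\dd x ,
\]
where $\phi$ is a Gaussian-type envelope at scale $\tau$, for instance $\phi=\tau^{q/2}\Gamma_\tau(\cdot,0)$ up to normalisation. Since $\partial_t P_{\tau-t}\phi=-\Lcal P_{\tau-t}\phi$, the linear terms cancel and
\[
F'(t)=h(t)\int u^p\,P_{\tau-t}\phi\,\dd x .
\]
Jensen with respect to the measure $P_{\tau-t}\phi\,\dd x$ gives $\int u^pP_{\tau-t}\phi\ge F^p/M(t)^{p-1}$, where $M(t)=\int P_{\tau-t}\phi\asymp\tau^{q/2}$. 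Hence $F'\ge c\,h\,\tau^{-\frac q2(p-1)}F^p$. A lower bound $F(t)\gtrsim\eps$ holds for $t\ge T_1$ by Gaussian lower bounds of $\Gamma$: the mass $\eps\|u_0\|_1$ sits at scale $\sqrt t$, and so does $\phi$.

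The ODE $F'\ge c\,h\,\tau^{-\alpha}F^p$, integrated over $[\tau/2,\tau]$, only gives the cap $\eps^{p-1}\int_{\tau/2}^\tau h\,t^{-\alpha}\le C$ on a single dyadic block. This is the main obstacle: summing over dyadic blocks naively loses a factor of $\log$ in the number of scales. To overcome it, I would make the lower bound on $F$ self-improving. On the grid $\tau_k=2^kT_1$, define $F_k$ with envelope at scale $\tau_k$ and show $F_{k+1}(\tau_k)\ge F_k(\tau_k)$ up to a factor $(1-\eta_k)$ with $\prod(1-\eta_k)>0$. This follows from the monotonicity $F_k(t)\le F_k(\tau_k)$ transported between adjacent scales via the semigroup property, and it is why the grids are interlaced, to handle the overlap. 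The Bernoulli ODE then telescopes:
\[
F_k(\tau_k)^{1-p}-F_0^{1-p}\lesssim-\sum_j\int_{\tau_j}^{\tau_{j+1}}h\,t^{-\alpha},
\]
and positivity of $F^{1-p}$ gives \eqref{eq:upper-large}.

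The consequences follow directly. Case (i) is (a) with $T=\infty$. For case (ii), suppose $T_\eps=\infty$. Then (b) with $\tau\to\infty$ contradicts $\Hp(\infty)=\infty$, so $T_\eps<\infty$. The lower bound in \eqref{eq:master} comes from (a), by continuity of $\Hp$ and maximality of $T_\eps$. The upper bound comes from (b) with $\tau\uparrow T_\eps$, using $\Hp(T_1)\le C_{\rm up}\eps^{-(p-1)}$ for $\eps\le\eps_0$; if $T_\eps<T_1$, then $\Hp(T_\eps)\le\Hp(T_1)$ already suffices.
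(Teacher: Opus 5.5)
Your part (a), and your derivation of (i)--(ii) from (a) and (b), are correct and essentially follow the paper. The gap is in part (b), at the step you call self-improving: the claim $F_{k+1}(\tau_k)\ge(1-\eta_k)F_k(\tau_k)$ with $\prod_k(1-\eta_k)>0$.

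With your normalisation, $F_k(t)=\tau_k^{q/2}[P_{2\tau_k-t}u(t)](0)$. So the two sides test the same function $u(\tau_k)$ against the windows $\tau_k^{q/2}\Gamma_{\tau_k}(0,\cdot)$ and $(2\tau_k)^{q/2}\Gamma_{3\tau_k}(0,\cdot)$, which are comparable only up to a fixed factor:
\begin{itemize}[nosep,leftmargin=1.6em]
\item Already for $\Lcal=\Delta$, the second window is $(2/3)^{q/2}$ times the first at the origin. Hence reaction mass created near the origin shortly before $\tau_k$ is seen by $F_{k+1}$ with a fixed loss.
\item For general $\Lcal$ only \eqref{eq:origin-gaussian}, with unequal constants, is available. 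At best you get $F_{k+1}(\tau_k)\ge cF_k(\tau_k)$ with a fixed $c<1$.
\item Refining the grid does not help: even for $\Lcal=\Delta$, the losses accumulated over one dyadic block multiply to a fixed constant $<1$.
\item The time-monotonicity of $F_k$ and the semigroup law do not help either. They relate one window at different times, not windows with different terminal times.
\end{itemize}
With $1-\eta_k\equiv c$, your recursion for $G_k=F_k(\tau_k)^{1-p}$ reads $G_{k+1}\le c^{1-p}G_k-c'w_k$, where $w_k=\int_{\tau_k}^{\tau_{k+1}}h\,t^{-\alpha}\dd t$. This gives only $w_k\lesssim c^{-(p-1)k}\eps^{-(p-1)}$. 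That is weaker than the single-slab cap, and useless at the critical power, which is exactly where (b) has content.

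The missing idea is a quantity that is exactly monotone in the scale, so that the Gaussian constants are paid once in the rate rather than multiplied along the grid. The paper uses the envelope energy $E(R)=\int_0^{R^2}\!\int h\,u^p e^{-A\CC(0,x)^2/R^2}\dd x\dd t$. It is nondecreasing in $R$ with no loss, and the Gaussian bounds are used only to sandwich the window functional on one slab: $F_R(aR^2)\ge c_F[\eps+E(rR)]$ and $F_R(bR^2)-F_R(aR^2)\le C_{\rm e}[E(\ell R)-E(rR)]$. Integrating the Jensen inequality over $[aR^2,bR^2]$ then gives two estimates:
\begin{itemize}[nosep,leftmargin=1.6em]
\item the additive increment $E(\ell R)-E(rR)\ge c_{\rm inc}\Lambda(R)[\eps+E(rR)]^p$;
\item via the Bernoulli integration, the cap $\Lambda(R)[\eps+E(rR)]^{p-1}\le C_{\rm cap}$.
\end{itemize}
For $Y_j=\eps+E(S_j)$ this reads $Y_{j+1}\ge Y_j(1+z_j)$ with $0\le z_j\le Z$. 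The factor in front of $Y_j$ is at least one, and $Y_j^{1-p}-Y_{j+1}^{1-p}\ge c_Zc_{\rm inc}W_j$ telescopes to $\sum_jW_j\le C\eps^{-(p-1)}$.

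Note also that the interlaced grids are not there to handle an overlap between adjacent windows. The increment inequality forces the scale ratio $\ell/r$ to exceed the slab ratio $(b/a)^{1/2}$, so one grid leaves gaps in time. Finitely many shifted grids, together with a cap-only treatment of the final piece $[\kappa\tau,\tau]$, are needed to cover $[T_1,\tau]$ for an arbitrary $h$.
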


\begin{inwords}
Part (a) is a guarantee: as long as $\eps^{p-1}\Hp(T)\le c_{\rm fp}$, the
solution exists at least up to time $T$ and stays comparable to the linear flow
(its $L^1$ norm stays of order $\eps$ and its height decays like
$(1+t)^{-q/2}$). Part (b) is the converse: whatever the amplitude, the weighted
reaction accumulated before the lifespan cannot exceed a constant multiple of
$\eps^{-(p-1)}$. Together they say that small data are global when the total
weighted reaction $\Hp(\infty)$ is finite, and that otherwise the lifespan is
the time at which $\Hp$ reaches the level $\eps^{-(p-1)}$, up to constants. No
regularity of $h$ is needed.
\end{inwords}

Since larger data cannot survive longer, there is also a critical amplitude.

\begin{theorem}\label{thm:threshold}
Suppose that Assumption~\ref{ass:H} and \eqref{eq:h-ass} hold, and let
\[
 \eps_*:=\sup\{\eps>0:\ T_\eps=\infty\}\in[0,\infty],\qquad \sup\emptyset:=0 .
\]
\begin{enumerate}[label=\textup{(\roman*)}]
\item \emph{Monotonicity.} $\eps\mapsto T_\eps$ is nonincreasing and
$u_{\eps_1}\le u_{\eps_2}$ for $\eps_1\le\eps_2$. Hence the solution is global
for $\eps<\eps_*$ and blows up in finite time for $\eps>\eps_*$.
\item \emph{Criterion for $\eps_*>0$.}
\begin{itemize}[nosep,leftmargin=1.6em]
\item If $\Hp(\infty)=\infty$, then $\eps_*=0$, i.e.\ every solution with
nontrivial data blows up in finite time.
\item If $\Hp(\infty)<\infty$, then $\eps_*\ge(c_{\rm fp}/\Hp(\infty))^{1/(p-1)}>0$,
i.e.\ $T_\eps=\infty$ whenever $\eps^{p-1}\Hp(\infty)\le c_{\rm fp}$. If moreover
$\Hp(\infty)\ge2\Hp(T_1)$, then also
$\eps_*\le(2C_{\rm up}/\Hp(\infty))^{1/(p-1)}$.
\end{itemize}
In particular, $\eps_*>0$ if and only if $\Hp(\infty)<\infty$.
\item \emph{Large data blow up.} $\eps_*<\infty$ unless $h=0$ a.e.\
\textup{(}in which case every solution is global\textup{)}.
\end{enumerate}
\end{theorem}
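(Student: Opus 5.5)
We derive Theorem~\ref{thm:threshold} from Theorem~\ref{thm:master} together with a comparison principle for mild solutions. The order is (i), then (ii), then (iii).

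\emph{Step 1: monotonicity.} Fix $\eps_1\le\eps_2$ and let $u_1=u_{\eps_1}$, $u_2=u_{\eps_2}$. We build the mild solutions by Picard iteration of the Duhamel map
\[
\Phi_\eps(u)(t)=\eps P_tu_0+\int_0^tP_{t-s}\bigl(h(s)u(s)^p\bigr)\dd s .
\]
Two facts drive the argument. First, $P_t$ is positivity preserving, because its kernel satisfies $\Gamma_t\ge0$. Second, $r\mapsto r^p$ is nondecreasing on $[0,\infty)$ and $h\ge0$. Hence $\Phi_{\eps_1}(v)\le\Phi_{\eps_2}(w)$ whenever $0\le v\le w$. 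Starting both iterations from $0$, induction gives the ordering of every iterate, and the limits satisfy $u_1\le u_2$ on the common interval of existence.

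Next we use that $T_\eps$ is characterized by blow-up of $\norm{u(t)}_\infty$; this is the standard blow-up alternative for the mild solution in $L^\infty$. Since $0\le u_1\le u_2$, $u_1$ stays bounded wherever $u_2$ does, so $T_{\eps_1}\ge T_{\eps_2}$. Consequently, if $\eps<\eps_*$ we pick $\eps'\in(\eps,\eps_*]$ with $T_{\eps'}=\infty$ and get $T_\eps=\infty$. If $\eps>\eps_*$, then $T_\eps<\infty$ by the definition of the supremum.

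\emph{Step 2: the criterion.} If $\Hp(\infty)=\infty$, Theorem~\ref{thm:master}(ii) gives $T_\eps<\infty$ for every $\eps>0$, so $\eps_*=0$. If $\Hp(\infty)<\infty$, Theorem~\ref{thm:master}(i), or directly part (a) with $T=\infty$, gives $T_\eps=\infty$ whenever $\eps^{p-1}\Hp(\infty)\le c_{\rm fp}$. This is the stated lower bound on $\eps_*$.

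For the upper bound, suppose $T_\eps=\infty$. Part (b) then holds for every $\tau\ge T_1$; letting $\tau\to\infty$ yields
\[
\Hp(\infty)\le\Hp(T_1)+C_{\rm up}\eps^{-(p-1)} .
\]
If $\Hp(\infty)\ge2\Hp(T_1)$, then $\Hp(T_1)\le\tfrac12\Hp(\infty)$, so $\tfrac12\Hp(\infty)\le C_{\rm up}\eps^{-(p-1)}$, that is, $\eps\le(2C_{\rm up}/\Hp(\infty))^{1/(p-1)}$. Taking the supremum over such $\eps$ bounds $\eps_*$. The equivalence ``$\eps_*>0$ iff $\Hp(\infty)<\infty$'' follows from the two bullets.

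\emph{Step 3: large data.} If $h=0$ a.e., the solution is $\eps P_tu_0$, which is global, so every solution is global. Otherwise $\Hp(\infty)>0$, and the only remaining case is $0<\Hp(\infty)<\infty$ with $\Hp(\infty)<2\Hp(T_1)$; Step 2 already handles $\Hp(\infty)=\infty$ and $\Hp(\infty)\ge2\Hp(T_1)$.

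Here is the main obstacle: the bound from (b) only controls $h$ on $[T_1,\infty)$, which may carry no mass. I would instead use the local mass of $h$ near some time. First choose $t_0$ and $t_1>t_0$ with $\int_{t_0}^{t_1}h>0$. Next use the positivity and lower bounds of the heat kernel from the global theory: on a fixed ball $B$ one has $u(t)\ge\eps P_tu_0\ge c\eps$ for $t\in[t_0,t_1]$. Then run a Kaplan-type argument with the test functional $F(t)=\int u(t)\,P_{t_1-t}\phi\,\dd x$, where $\phi\ge0$ is a fixed bump. By Jensen's inequality this functional satisfies
\[
F'\ge h(t)\,c_\phi F^p .
\]
Integrating on $[t_0,t_1]$ shows that blow-up occurs before $t_1$ once $F(t_0)^{p-1}\gtrsim\bigl(\int_{t_0}^{t_1}h\bigr)^{-1}$, and $F(t_0)\ge c\eps$, so this holds for $\eps$ large.

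The subtlety I expect is the Jensen step: it needs the backward weight $P_{t_1-t}\phi$ normalized as a probability density in $x$ uniformly on $[t_0,t_1]$. The mass conservation $\int P_s\phi=\int\phi$, from stochastic completeness of $P_t$, supplies this, and then $\eps_*<\infty$.
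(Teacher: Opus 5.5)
Your proposal is correct and follows the paper's proof: (i) by monotone iteration plus the blow-up alternative, (ii) directly from Theorem~\ref{thm:master}, and (iii) by a backward-semigroup functional, Jensen, and integrating $(F^{1-p})'\le-(p-1)c_\phi h$ over a window where $h$ has positive mass (the paper's choice of your bump $\phi$ is the heat-kernel column $R^q\Gamma_{\omega R^2-t_1}(0,\cdot)$). Your preliminary case split in (iii) is harmless but unnecessary, since the window argument works whenever $h$ is positive on a set of positive measure.
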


\begin{inwords}
Larger data cannot live longer, so there is a sharp transition amplitude
$\eps_*$: solutions are global below it and blow up above it. It is positive
exactly when $\Hp(\infty)<\infty$, so the same quantity $\Hp$ decides whether
small data are global at all. When $\Hp(\infty)<\infty$ and $\Hp(T_1)$ is not
dominant, $\eps_*$ is comparable to $\Hp(\infty)^{-1/(p-1)}$.
\end{inwords}

\begin{remark}\label{rem:comments}
Theorems~\ref{thm:master} and~\ref{thm:threshold} motivate the
following comments.
\begin{enumerate}[label=\textup{(\alph*)}]
\item No regularity, monotonicity or doubling is imposed on $h$: it may
oscillate, vanish on long intervals, or concentrate in short windows. This is
the reason for stating the law through $\Hp(T_\eps)$ rather than
$\Hp^{-1}(\eps^{-(p-1)})$, since $\Hp$ may have plateaus. The constants
$c_{\rm fp}$ and $C_{\rm up}$ come from unrelated arguments, so we obtain the
order of the lifespan but not the sharp multiplicative constant.
\item The second case of Theorem~\ref{thm:threshold}-(ii),
``$\Hp(\infty)<\infty$ $\Rightarrow$ some data are global'', is the
contrapositive of the necessary condition for blow-up in
\cite[Thm.~4.2]{ChatzakouKassymovRuzhansky2026}. Our setting is also covered by
the abstract framework of Meglioli and Punzo \cite{MeglioliPunzo2026}: $\R^n$
with the distance $\CC$ and Lebesgue measure carries the regular, strongly
local Dirichlet form $\int\sum_j|X_ju|^2\dd x$, whose heat kernel is
conservative and satisfies the two-sided Gaussian bounds
\eqref{eq:Gaussian-general}. For continuous $h>0$ and continuous data,
\cite[Thm.~6.6]{MeglioliPunzo2026} gives global existence for small data when
$\int_0^\infty h(t)\sup_{x,y}\Gamma_{t+\gamma}(x,y)^{p-1}\dd t<\infty$, which
here is exactly $\Hp(\infty)<\infty$ (for data below a multiple of a
heat-kernel column $\Gamma_\gamma(\cdot,y_0)$); their single-time criterion
\cite[Thm.~3.3]{MeglioliPunzo2026} (see also \cite{LenzSchmidtZimmermann2023}
for $h\equiv1$) contains the large-data blow-up of
Theorem~\ref{thm:threshold}-(iii) and Proposition~\ref{prop:window}; and
\cite[Thm.~6.4]{MeglioliPunzo2026} gives blow-up at $p=p_F$ for $h\equiv1$.
For a general $h$ the blow-up criterion \cite[Thm.~6.2]{MeglioliPunzo2026} is
single-scale; in our setting it reads
$\limsup_{T\to\infty}T^{-\alpha}\int_0^Th\dd t=\infty$. The new points are the
converse $\Hp(\infty)=\infty\Rightarrow T_\eps<\infty$ for all $\eps>0$, for an
arbitrary $h\ge0$ in $L^1_{\rm loc}$, which requires summing the reaction over
all scales; the two-sided law \eqref{eq:master}; and the quantitative
consequences in Theorems~\ref{thm:powerlog}--\ref{thm:nonstandard}.
\item The geometry enters only through the homogeneous dimension $q$; examples
are collected in Subsection~\ref{subsec:examples}.
\end{enumerate}
\end{remark}

\subsection*{Classical solutions}

The solutions in Theorems~\ref{thm:master} and~\ref{thm:threshold} are mild
solutions (Definition~\ref{def:mild}), for which nothing is assumed on $h$
beyond \eqref{eq:h-ass}. When $h$ is continuous, it is natural to ask whether
they solve the equation pointwise. Let $h\in C((0,\infty))$ satisfy
\eqref{eq:h-ass}. A \emph{classical solution on $[0,T)$} is a function $u\ge0$
with $u\in C([0,T);L^1(\R^n))$ and $u(0)=\eps u_0$, bounded on $[0,t]\times\R^n$
for every $t<T$, such that $u_t$, $X_ku$ and $X_iX_ju$
\textup{(}$1\le i,j,k\le m$\textup{)} exist and are continuous on
$(0,T)\times\R^n$ and bounded on $[t_0,t_1]\times\R^n$ for $0<t_0<t_1<T$, and
$u_t-\Lcal u=h\,u^p$ pointwise in $(0,T)\times\R^n$.

\begin{theorem}[Classical solutions]\label{thm:regularity}
Suppose that Assumption~\ref{ass:H} and \eqref{eq:h-ass} hold, let $u$ be the mild
solution on $[0,T_\eps)$, and let $I\subset(0,T_\eps)$ be an open interval. Then the following hold.
\begin{enumerate}[label=\textup{(\alph*)}]
\item \emph{Positivity.} $u\ge\eps P_tu_0>0$ and $u\in C((0,T_\eps)\times\R^n)$.
\item \emph{Smooth reaction.} If $h\in C^\infty(I)$, then
$u\in C^\infty(I\times\R^n)$ and $u_t-\Lcal u=hu^p$ pointwise in $I\times\R^n$.
\item \emph{H\"older reaction.} If $h\in C^{0,\gamma}_{\rm loc}(I)$,
$0<\gamma\le1$, and $\varkappa\in(0,1)$, $\varkappa\le2\gamma$, then for every
compact $J\subset I$ the derivatives $X_ku$, $u_t$ and $X_iX_ju$
\textup{(}all $i,j,k$\textup{)} exist, are continuous and bounded on
$J\times\R^n$, $u_t=\Lcal u+hu^p$ pointwise, and each $D\in\{u_t,X_iX_ju\}$
satisfies
\begin{equation}\label{eq:schauder}
 |D(t,x)-D(s,y)|\le C\bigl(\CC(x,y)^\varkappa+|t-s|^{\varkappa/2}\bigr),
 \qquad t,s\in J,\ x,y\in\R^n .
\end{equation}
\item \emph{Classical solutions.} If $h\in C^{0,\gamma}_{\rm loc}((0,\infty))$
for some $\gamma\in(0,1]$ \textup{(}in particular if $h\in C^\infty((0,\infty))$\textup{)},
then $u$ is a classical solution on $[0,T_\eps)$. Conversely, every classical
solution on $[0,T)$ coincides with $u$ on $[0,T)$, so $T\le T_\eps$ and $T_\eps$
is the maximal existence time of classical solutions; if $T_\eps<\infty$, then
$\limsup_{t\uparrow T_\eps}\norm{u(t)}_\infty=\infty$.
\end{enumerate}
\end{theorem}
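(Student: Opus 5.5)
The plan is to work throughout with the Duhamel formula \eqref{eq:mild},
\[
 u(t)=\eps P_tu_0+\int_0^t P_{t-s}\bigl[h(s)u(s)^p\bigr]\dd s ,
\]
together with the heat-kernel facts for homogeneous H\"ormander operators already in use: the two-sided Gaussian bounds \eqref{eq:Gaussian-general}, the corresponding bounds for $X_k\Gamma_t$, $X_iX_j\Gamma_t$ and $\partial_t\Gamma_t$, and hypoellipticity of $\partial_t-\Lcal$.

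\textbf{Part (a).} The integral term is nonnegative, so $u\ge\eps P_tu_0$. The lower Gaussian bound gives $\Gamma_t(x,y)>0$ everywhere, and $u_0\not\equiv0$, hence $P_tu_0>0$. For continuity, $P_tu_0$ is smooth for $t>0$. On $[0,t_1]$ the mild solution is bounded, so the Duhamel integrand $P_{t-s}[h(s)u(s)^p]$ is continuous in $(t,x)$ for $s<t$ and dominated by $h(s)\norm{u}_\infty^p$. Dominated convergence, with $h\in L^1_{\rm loc}$, then gives continuity of the integral.

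\textbf{Parts (b) and (c).} First, by the gradient bound $\norm{X_kP_\tau}_{\infty\to\infty}\lesssim\tau^{-1/2}$, which is integrable, I differentiate under the integral once and get $X_ku$ bounded on $J\times\R^n$. This already holds for $h\in L^1_{\rm loc}$, and it shows that $u$, hence $u^p$, is spatially Lipschitz in $\CC$ on $J$.

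For second derivatives the kernel bound $\tau^{-1}$ is not integrable, so I need H\"older regularity of $g(s,y)=h(s)u(s,y)^p$. I would localize in time: take $t_0<\inf J$ and write, for $t\in J$,
\[
 u(t)=P_{t-t_0}u(t_0)+\int_{t_0}^tP_{t-s}g(s)\dd s ,
\]
where the first term is smooth. On $[t_0,t]$ the factor $h$ is $\gamma$-H\"older in time. The time regularity of $u$ comes from a bootstrap: $u$ is $1/2$-H\"older in $t$, via the $X_ku$ bound and the standard estimate $\norm{P_\tau f-f}_\infty\lesssim\tau^{1/2}\norm{\nabla_X f}_\infty$. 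Hence $g$ is parabolically $\varkappa$-H\"older with $\varkappa\le\min(1,2\gamma)$.

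I then write
\[
 X_iX_j\int_{t_0}^tP_{t-s}g(s)\dd s
 =\int_{t_0}^t X_iX_jP_{t-s}\bigl[g(s)-g(t,x)\bigr]\dd s+\bigl(\text{term with }g(t,x)\bigr).
\]
The second term is handled by $\int X_iX_j\Gamma=0$, which follows from the divergence structure: the $X_j$ are divergence-free thanks to homogeneity and the rank condition, as in Biagi--Bonfiglioli. This makes the first integral converge and gives boundedness. The H\"older estimate \eqref{eq:schauder} then follows from the standard Schauder-type splitting near and far from the diagonal, using the Gaussian bounds for third derivatives. The time derivative is obtained from $u_t=\Lcal u+hu^p$, which is verified by differentiating Duhamel in distributions. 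For (b), $C^\infty$ follows by iterating this bootstrap, or more simply from hypoellipticity of $\partial_t-\Lcal$ applied to the distributional equation, once $hu^p$ is known to be $C^k$ for every $k$.

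\textbf{Part (d).} The forward direction is (c) applied on every compact $J$. Near $t=0$, boundedness and $C([0,T);L^1)$ come from the mild theory. For the converse, take a classical solution $v$ and a fixed $(t,x)$. I apply the representation formula
\[
 \frac{\dd}{\dd s}P_{t-s}v(s)=P_{t-s}\bigl(v_s-\Lcal v\bigr)=P_{t-s}\bigl(hv^p\bigr)
\]
on $[\delta,t-\delta]$ and integrate. The integration by parts against the rapidly decaying kernel is justified by boundedness of $v$ and its derivatives on slabs together with the Gaussian bounds. Letting $\delta\to0$, using $v\in C([0,T);L^1)$ and boundedness, shows that $v$ is a mild solution. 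Uniqueness of bounded mild solutions, via the Gronwall-type contraction with the locally Lipschitz $u^p$ and $h\in L^1_{\rm loc}$, then gives $v=u$. The blow-up alternative $\limsup\norm{u(t)}_\infty=\infty$ is the standard continuation property of the mild solution.

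I expect the main obstacle to be the second-derivative estimate in (c). Its difficulty lies in obtaining the cancellation $\int X_iX_j\Gamma_\tau(x,y)\dd y=0$ and the third-order kernel bounds in the non-group setting, where no translation invariance is available. I would rely on the Biagi--Bonfiglioli--Bramanti global estimates for derivatives of $\Gamma$. For the cancellation I would use $P_\tau1=1$, justified by conservativeness, differentiated in $x$.
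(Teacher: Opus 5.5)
Your arguments for (a), (c) and (d) work, but (b) has a gap. Hypoellipticity in its qualitative form ($Pu\in C^\infty\Rightarrow u\in C^\infty$, $P=\partial_t-\Lcal$) cannot be applied to $Pu=hu^p$, because the right-hand side is only as smooth as $u$ itself. So ``once $hu^p$ is known to be $C^k$ for every $k$'' is exactly what has to be proved.

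``Iterating this bootstrap'' does not close the gap either. Unlike the Euclidean case, $X_k$ commutes neither with $P_\tau$ nor with $\Lcal$. Your Duhamel kernel estimates therefore stop at $u\in C^{2,\varkappa}_X$ and do not iterate as they stand.

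The missing ingredient, which the paper uses, is the \emph{quantitative} subelliptic estimate for $\partial_t-\Lcal$: the local gain $H^s_{\rm loc}\to H^{s+\sigma_0}_{\rm loc}$ of H\"ormander and Rothschild--Stein. Combine it with a Moser composition estimate for $u\mapsto u^p$, then use induction on $s$ and Sobolev embedding. For non-integer $p$ the composition step needs $c\le u\le M$ on compacts of $I\times\R^n$. You have this from (a), but you must invoke it, since $s\mapsto s^p$ is not smooth at $0$.

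A smaller point concerns your remark that the gradient bound works for $h\in L^1_{\rm loc}$. That bound gives $X_ku$ bounded on $J\times\R^n$ only when $\sup_{t\in J}\int_0^th(s)(t-s)^{-1/2}\dd s<\infty$, for instance when $h$ is bounded near $J$. This holds in (c), and it is exactly where the paper uses $\sup_{J_2}h$. For a general $h\in L^1_{\rm loc}$ that integral can diverge, so the parenthetical claim is not supported by your argument.

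For (c) and (d) your route is genuinely different from the paper's.
\begin{itemize}
\item You import global pointwise Gaussian bounds for $X^I\Gamma$, $|I|\le3$, and for $\partial_t\Gamma$ (Biagi--Bramanti).
\item You get the cancellation from $P_\tau1=1$ differentiated in $x$. This, rather than divergence-freeness, is the correct reason for $x$-derivatives.
\item You freeze $g$ at $(t,x)$ and run the classical near/far space-time Schauder splitting, and in (d) you integrate by parts against the kernel.
\end{itemize}
The paper needs no pointwise derivative bounds at all.
\begin{itemize}
\item It proves the semigroup bounds (G), $\norm{X^IP_\tau g}_\infty\le C\tau^{-k/2}\norm g_\infty$ and $\norm{X^IP_\tau g}_\infty\le C\tau^{-(k-\varkappa)/2}[g]_\varkappa$, by exact scaling on a Carnot group, where the cancellation is $\int X^I\gamma_\tau=0$. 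It transfers them to $\R^n$ through the saturation formula of the global lifting.
\item It freezes only in time, and gets spatial H\"older continuity by splitting the $\sigma$-integral at $d^2$ with the order-two and order-three bounds of (G).
\item It gets time H\"older continuity from \eqref{eq:P-Duhamel} applied to $g=u(t)$, using the already established bound on $[\Lcal u(t)]_\varkappa$. The same identity replaces your integration by parts in Lemma~\ref{lem:classical-mild}.
\end{itemize}
Your version is the textbook one and treats space and time together. Its cost is heavier input: third-order and time-derivative kernel bounds, plus doubling arguments in a geometry without translation invariance. The paper's version is more self-contained, using only the lifting identities and the Gaussian bounds for $\Gamma$ itself.
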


\begin{inwords}
The solution is positive, and it is as smooth as the coefficient allows: it is
$C^\infty$ where $h$ is, and for H\"older continuous $h$ one gets a
Schauder-type estimate, namely that $u_t$ and the second-order derivatives
$X_iX_ju$ are H\"older continuous in the parabolic scale of $\CC$. For H\"older
$h$ the mild solution is a classical solution, and it is the only one.
\end{inwords}

By (d), for H\"older $h$ Theorems~\ref{thm:master} and~\ref{thm:threshold} are
statements about classical solutions: $T_\eps$ is the lifespan of the classical
solution. Part (c) is the Schauder regularity for $\partial_t-\Lcal$ in the
parabolic scale of $\CC$; in particular the commutators $[X_i,X_j]u$ are
H\"older continuous too. Local Schauder estimates for parabolic operators
built on H\"ormander vector fields are due to Bramanti and Brandolini
\cite{BramantiBrandolini2007}, and for homogeneous fields global heat kernels
with $C^{2,\alpha}_X$ regularity and Gaussian bounds were constructed by Biagi
and Bramanti \cite{BiagiBramanti2021}. We give a self-contained proof adapted
to the Duhamel formula, in which $h$ is only locally integrable in the past and
H\"older continuous near $J$. When $h$ jumps, as in the intermittent examples
below, $u$ is not $C^1$ in time and only the mild formulation is available;
this is why Theorems~\ref{thm:master} and~\ref{thm:threshold} are stated for
mild solutions. The constant $C$ in \eqref{eq:schauder} depends on $J,I,\gamma,
\varkappa,X,p,\eps,u_0$, $\norm h_{L^1(0,\max J)}$, the $C^{0,\gamma}$ norm of
$h$ near $J$ and $\sup_{[0,\max J]}\norm u_\infty$.

\subsection*{A priori estimates}

By an \emph{a priori estimate} we mean a bound that holds for \emph{every}
classical solution on $[0,T)$ and uses only the equation: no smallness of $\eps$
and nothing from the construction in Theorem~\ref{thm:master}. Such solutions
exist for H\"older $h$ (Theorem~\ref{thm:regularity}-(d)), and every classical
solution is a mild solution (Lemma~\ref{lem:classical-mild}), so
Theorems~\ref{thm:master} and~\ref{thm:threshold} apply to it. We measure
solutions by the two quantities

\[
 N_T(u):=\sup_{0\le t<T}\bigl[\norm{u(t)}_1+(1+t)^{q/2}\norm{u(t)}_\infty\bigr],
 \quad
 \Theta_\eps(t):=(p-1)\bigl(\eps\norm{u_0}_\infty\bigr)^{p-1}\!\int_0^th(s)\dd s .
\]
Note that $N_T(u)$ is finite for $T<T_\eps$.

\begin{theorem}[A priori estimates for classical solutions]\label{thm:apriori}
Suppose that Assumption~\ref{ass:H} and \eqref{eq:h-ass} hold and that $h\in C((0,\infty))$, and let
$u$ be a classical solution on $[0,T)$. No smallness of $\eps$ is assumed. Then
$T\le T_\eps$ and, for $0\le t<T$:
\begin{enumerate}[label=\textup{(\alph*)}]
\item \emph{Maximum principle and mass.} $u(t)\ge\eps P_tu_0$ and
\[\eps\norm{u_0}_1\le\norm{u(t)}_1=\eps\norm{u_0}_1+\int_0^th\norm{u(s)}_p^p\dd s.\]
\item \emph{$L^k$ bounds.} For $1\le k\le\infty$,
$\norm{u(t)}_k\le\eps\norm{u_0}_k
\exp\bigl(\int_0^th\norm{u}_\infty^{p-1}\dd s\bigr)$.
\item \emph{ODE bound.} If $\Theta_\eps(t)<1$, then
$0\le u(t,x)\le\eps\norm{u_0}_\infty(1-\Theta_\eps(t))^{-\frac1{p-1}}$ and, for
$1\le k\le\infty$,
\begin{equation}\label{eq:Lk-ODE}
 \norm{u(t)}_k\le\eps\norm{u_0}_k\bigl(1-\Theta_\eps(t)\bigr)^{-\frac1{p-1}} .
\end{equation}
\end{enumerate}
Moreover $T_\eps\ge T_*:=\sup\{t:\Theta_\eps(t)<1\}$.
\end{theorem}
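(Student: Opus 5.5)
The plan is to reduce everything to the Duhamel representation and then argue by comparison with the linear flow and with a spatially constant supersolution.

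\emph{Step 1: classical solutions are mild.} By Lemma~\ref{lem:classical-mild}, $u$ satisfies
\[
u(t)=\eps P_tu_0+\int_0^tP_{t-s}\bigl(h(s)u(s)^p\bigr)\dd s .
\]
Uniqueness of mild solutions then gives $u=$ the mild solution on $[0,T)$, and hence $T\le T_\eps$.

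\emph{Step 2: part (a).} Since $P_t$ is positivity preserving and $hu^p\ge0$, the Duhamel formula gives $u(t)\ge\eps P_tu_0$ at once. For the mass identity, integrate the Duhamel formula over $\R^n$ and use Fubini/Tonelli (all terms are nonnegative). Conservativity of the heat kernel, $\int\Gamma_t(x,y)\dd x=1$, then yields $\norm{u(t)}_1=\eps\norm{u_0}_1+\int_0^th\norm{u(s)}_p^p\dd s$. The integral is finite because $u$ is bounded and in $L^1$ on $[0,t]$, so $\norm u_p^p\le\norm u_\infty^{p-1}\norm u_1$. The lower bound $\norm{u(t)}_1\ge\eps\norm{u_0}_1$ is then immediate.

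\emph{Step 3: part (b).} Write $hu^p=V u$ with the potential $V(s,x):=h(s)u(s,x)^{p-1}$, which satisfies $0\le V\le h(s)\norm{u(s)}_\infty^{p-1}=:a(s)$. Then $u$ solves the linear Duhamel equation $u=\eps P_tu_0+\int_0^tP_{t-s}(Vu)\dd s$. Each $P_t$ is a contraction on $L^k$, so
\[
\norm{u(t)}_k\le\eps\norm{u_0}_k+\int_0^ta(s)\norm{u(s)}_k\dd s .
\]
Gronwall's inequality gives (b), provided $\norm{u(s)}_k$ is finite and measurable. This holds since $u$ is bounded and in $L^1$, so by interpolation it lies in every $L^k$, uniformly on $[0,t]$. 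Alternatively, one can iterate the Picard series for the linear problem and bound it termwise by $\eps P_tu_0\cdot\frac{1}{j!}(\int a)^j$.

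\emph{Step 4: part (c).} Compare with the ODE solution
\[
y(t)=\eps\norm{u_0}_\infty\bigl(1-\Theta_\eps(t)\bigr)^{-1/(p-1)},\qquad y'=h y^p,\quad y(0)=\eps\norm{u_0}_\infty .
\]
Since $P_t1=1$, the spatially constant function $y$ is itself a mild solution on $\{\Theta_\eps<1\}$, with data $y(0)\ge\eps u_0$. The comparison principle for mild solutions then gives $u\le y$. I would derive it directly by subtracting the two Duhamel formulas, writing $w=(u-y)_+$, using $u^p-y^p\le p\max(\norm u_\infty,y)^{p-1}(u-y)_+$, and applying Gronwall to $\norm{w(t)}_\infty$ on compact subintervals. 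This is where care is needed, because $\norm{u}_\infty$ is bounded on compacts by the definition of a classical solution and $h\in L^1_{\rm loc}$.

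Given the pointwise bound, $\norm{u(s)}_\infty^{p-1}\le y(s)^{p-1}$. Then $\int_0^th\,y^{p-1}\dd s=\int_0^t y'/y\,\dd s=\log(y(t)/y(0))$, and inserting this into (b) yields \eqref{eq:Lk-ODE}.

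\emph{Step 5: the lower bound on $T_\eps$.} On every $[0,t]$ with $\Theta_\eps(t)<1$, the mild solution is bounded by $y(t)<\infty$; this follows from the same comparison applied to the mild solution. If $T_\eps<T_*$, then $\norm{u}_\infty$ would stay bounded up to $T_\eps$, contradicting the blow-up alternative for mild solutions (Theorem~\ref{thm:regularity}(d) or the local theory). Hence $T_\eps\ge T_*$.

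The main obstacle is making the comparison in Step 4 rigorous with $h$ only locally integrable near $0$ and continuous on $(0,\infty)$. Working at the level of Duhamel formulas, rather than with pointwise maximum-principle arguments, avoids this difficulty.
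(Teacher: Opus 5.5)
Your proof is correct. Parts (a) and (b), the identification $u=$ mild solution with $T\le T_\eps$, and the final claim $T_\eps\ge T_*$ follow the paper's argument essentially line by line. The one genuine difference is how you prove $u\le y$ in (c).

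The paper reuses the monotone scheme of Lemma~\ref{lem:comparison}. With $w_0=0$ and $w_{k+1}=\eps P_tu_0+\int_0^thP_{t-s}(w_k^p)\dd s$, positivity and the supersolution inequality for $v$ give $0\le w_k\le w_{k+1}\le\min\{u,v\}$. The limit is a bounded mild solution by Lemma~\ref{lem:duhamel-cont}, and uniqueness (Lemma~\ref{lem:local}-(i)) identifies it with $u$. You instead subtract the two Duhamel identities, using $P_\tau1=1$ and $\eps u_0\le y(0)$. You then bound $u^p-y^p\le L(u-y)_+$ with $L=p\max\{\sup_{[0,t]}\norm{u}_\infty,y(t)\}^{p-1}$ and apply Gronwall with the $L^1$ kernel $h$ to $\norm{(u-y)_+(s)}_\infty$.

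Your route is more direct. It builds no auxiliary solution and needs neither Lemma~\ref{lem:duhamel-cont} nor the uniqueness lemma. The price is that you use the local Lipschitz constant of $\xi^p$, which is finite because $u$ is bounded on compacts. You also need a measurability check for $s\mapsto\norm{(u-y)_+(s)}_\infty$; it follows from continuity of $s\mapsto(u(s)-y(s))_+$ in $L^1$ and lower semicontinuity of $\norm{\cdot}_\infty$. The paper's route uses only that $\xi\mapsto\xi^p$ is nondecreasing at the comparison step (Lipschitz enters only through uniqueness). It also exhibits $u$ as the minimal solution below any supersolution, which is why it is literally the argument of Lemma~\ref{lem:comparison}.

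Two small points. In (b) you assert measurability of $s\mapsto\norm{u(s)}_k$ but only check finiteness. Measurability follows from $u\in C([0,T);L^1)$: for $k<\infty$ use interpolation with the $L^\infty$ bound, and for $k=\infty$ use lower semicontinuity of the sup norm on $L^1$. For the blow-up alternative, cite Lemma~\ref{lem:local}-(iii) rather than Theorem~\ref{thm:regularity}-(d), since the latter assumes $h$ H\"older continuous while here $h$ is only continuous.
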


\begin{inwords}
(a) The solution lies above the linear flow, and its total mass grows by exactly
the integral of $\norm{u}_p^p$ weighted by $h$. (b) Every $L^k$ norm can grow
at most by the factor $\exp\bigl(\int_0^th\norm u_\infty^{p-1}\bigr)$. (c)
Comparing with the ordinary differential equation $v'=hv^p$ shows that as long
as the ``ODE time'' $\Theta_\eps(t)$ is below $1$, the solution stays bounded by
$\eps\norm{u_0}_\infty(1-\Theta_\eps)^{-1/(p-1)}$; in particular the solution
cannot blow up before $T_*$.
\end{inwords}

Theorem~\ref{thm:apriori} uses only the maximum principle and the
$L^k$-contractivity of the semigroup. It is far from sharp for large $t$, since
it sees no decay, but it holds for arbitrary $\eps$ and $h$ and already gives
the elementary lifespan bound $T_\eps\ge T_*$. The decay and the closeness to
the linear flow come from Theorem~\ref{thm:master}-(a) and are recorded in the
following corollary.

\begin{corollary}[Small-data regime]\label{cor:small}
Suppose that Assumption~\ref{ass:H} and \eqref{eq:h-ass} hold and that $h\in C((0,\infty))$, and
let $u$ be a classical solution on $[0,T)$ with $\eps^{p-1}\Hp(T)\le c_{\rm fp}$.
Then $N_T(u)\le C\eps$, and for $0\le t<T$ and $1\le k\le\infty$, with
$C=C(X,p,u_0)$,
\[
 \norm{u(t)}_k\le C\eps(1+t)^{-\frac q2(1-\frac1k)},\qquad
 \norm{u(t)-\eps P_tu_0}_k\le C\eps^p\Hp(T)(1+t)^{-\frac q2(1-\frac1k)},
\]
and $\eps\norm{u_0}_1\le\norm{u(t)}_1\le\eps\norm{u_0}_1+C\eps^p\Hp(t)$.
\end{corollary}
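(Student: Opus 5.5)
The argument reduces to uniqueness plus Theorem~\ref{thm:master}-(a). Let $u$ be a classical solution on $[0,T)$ with $\eps^{p-1}\Hp(T)\le c_{\rm fp}$. By Lemma~\ref{lem:classical-mild}, $u$ is a mild solution. By the uniqueness part of Theorem~\ref{thm:regularity}-(d), or directly by uniqueness of bounded mild solutions, it coincides on $[0,T)$ with the maximal mild solution; moreover $T\le T_\eps$ by Theorem~\ref{thm:apriori}. Theorem~\ref{thm:master}-(a) then applies verbatim: \eqref{eq:X-bound} is exactly $N_T(u)\le C\eps$. The $L^k$ decay follows by interpolation. For $0\le t<T$,
\[
 \norm{u(t)}_k\le\norm{u(t)}_1^{1/k}\norm{u(t)}_\infty^{1-1/k}\le C\eps(1+t)^{-\frac q2(1-\frac1k)}.
\]

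For the deviation from the linear flow I would use the Duhamel formula
\[
 u(t)-\eps P_tu_0=\int_0^tP_{t-s}\bigl(h(s)u(s)^p\bigr)\dd s .
\]
From $N_T(u)\le C\eps$ we get
\[
 \norm{u(s)^p}_1\le\norm{u(s)}_\infty^{p-1}\norm{u(s)}_1\le C\eps^p(1+s)^{-\alpha},
 \qquad
 \norm{u(s)^p}_\infty\le C\eps^p(1+s)^{-\frac q2p}.
\]
The $L^1$ bound (case $k=1$) follows from $L^1$-contractivity:
\[
 \norm{u(t)-\eps P_tu_0}_1\le C\eps^p\int_0^th(s)(1+s)^{-\alpha}\dd s\le C\eps^p\Hp(T).
\]
For $k=\infty$ I would split at $s=t/2$.

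\emph{Range $s\le t/2$.} The $L^1\to L^\infty$ decay \eqref{eq:L1Linfty} gives $\norm{P_{t-s}}_{1\to\infty}\le C(t-s)^{-q/2}\le C't^{-q/2}$, so this part is bounded by $C\eps^pt^{-q/2}\Hp(t)$. For $t\le1$ I would use $L^\infty$-contractivity instead, together with $(1+s)\asymp1$.

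\emph{Range $s\ge t/2$.} Here I would use $L^\infty$-contractivity:
\[
 \int_{t/2}^t h(s)\,C\eps^p(1+s)^{-\frac q2p}\dd s\le C\eps^p(1+t)^{-q/2}\int_{t/2}^t h(s)(1+s)^{-\alpha}\dd s ,
\]
since $(1+s)^{-q/2}\le 2^{q/2}(1+t)^{-q/2}$ for $s\ge t/2$.

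Combining the two ranges gives the $L^\infty$ bound $C\eps^p\Hp(T)(1+t)^{-q/2}$. Interpolating between the $L^1$ and $L^\infty$ bounds gives the intermediate $k$.

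The mass statements follow from Theorem~\ref{thm:apriori}-(a). The lower bound is immediate. For the upper bound, insert $\norm{u(s)}_p^p\le C\eps^p(1+s)^{-\alpha}$ into the mass identity to get $\norm{u(t)}_1\le\eps\norm{u_0}_1+C\eps^p\Hp(t)$.

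The only delicate point is the identification of the classical solution with the mild solution constructed in Theorem~\ref{thm:master}-(a). This needs uniqueness of mild solutions that are bounded on each $[0,t]\times\R^n$, which is part of the definition of classical solution. I would invoke Theorem~\ref{thm:regularity}-(d) for it; failing that, a Gronwall argument in $L^\infty$ on $[0,t]$, using local integrability of $h$, gives uniqueness directly.
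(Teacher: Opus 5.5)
Your proposal is correct and follows the paper's proof essentially step for step: Lemma~\ref{lem:classical-mild} identifies $u$ with the maximal mild solution, Theorem~\ref{thm:master}-(a) gives $N_T(u)\le C\eps$, and the deviation and mass bounds come from the $L^1$ and split-at-$t/2$ estimates of Subsection~\ref{subsec:lower} (which you re-derive rather than cite), interpolation, and Theorem~\ref{thm:apriori}-(a). One small point: the identification, including $T\le T_\eps$, is already contained in Lemma~\ref{lem:classical-mild}, which needs only $h\in C((0,\infty))$, whereas Theorem~\ref{thm:regularity}-(d) is stated for H\"older $h$, so you need not invoke it.
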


\begin{inwords}
In the small-data regime the solution decays like the heat flow, and it differs
from the linear flow $\eps P_tu_0$ by a term that is smaller than $\eps$ by
the factor $\eps^{p-1}\Hp(T)\le c_{\rm fp}$.
\end{inwords}

\subsection{Particular operators}\label{subsec:examples}

Assumption~\ref{ass:H} contains many classical operators; for each of them
Theorems~\ref{thm:master}--\ref{thm:nonstandard} hold with the value of $q$
below. Recall that $p_F=1+2/q$ is then the Fujita exponent of $h\equiv1$.

\begin{proposition}\label{prop:examples}
Each operator in Table~\ref{tab:operators} satisfies Assumption~\ref{ass:H}
with the indicated weights $\sigma$ and homogeneous dimension $q=\sum\sigma_i$.
\end{proposition}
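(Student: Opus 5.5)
The proof is a case-by-case verification of Assumption~\ref{ass:H} for each operator in Table~\ref{tab:operators}. We will use the explicit form of the vector fields and the weights $\sigma$ given there. For each operator we check three things: linear independence of $X_1,\dots,X_m$ as vector fields (that is, over $C^\infty$ functions on a dense open set, or simply as first-order operators); the homogeneity (H1); and the rank condition (H2) at the origin. Then $q=\sum\sigma_i$ is read off. The Euclidean case is immediate: take $X_j=\partial_{x_j}$ and $\sigma\equiv1$, so $q=n$.

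For (H1) we use one observation. A monomial vector field $a(x)\partial_{x_i}$, with $a$ a polynomial each of whose monomials $x^\beta$ has weighted degree $\sum_k\beta_k\sigma_k=\sigma_i-1$, satisfies $X(f\circ\delta_\lambda)=\lambda (Xf)\circ\delta_\lambda$. To see this, note that $\partial_{x_i}(f\circ\delta_\lambda)=\lambda^{\sigma_i}(\partial_if)\circ\delta_\lambda$, while $a(x)=\lambda^{-(\sigma_i-1)}a(\delta_\lambda x)$. So (H1) reduces to a degree count for every coefficient. Examples:
\begin{itemize}[nosep,leftmargin=1.6em]
\item For the Heisenberg fields $X_j=\partial_{x_j}+2y_j\partial_t$ and $Y_j=\partial_{y_j}-2x_j\partial_t$, take $\sigma=(1,\dots,1,2)$; the coefficient $y_j$ has degree $1=2-1$, and $q=2d+2$.
\item For Grushin fields $\partial_x$ and $x^k\partial_y$, take $\sigma=(1,k+1)$, so that $q=k+2$; Martinet-type fields are handled the same way.
\item For a Carnot group in exponential coordinates, the left-invariant generators of the first layer are homogeneous of degree one with respect to the graded dilations. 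The reason is that the group law is polynomial and homogeneous, and then $q=\sum_k k\dim V_k$.
\end{itemize}

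For (H2) we compute brackets explicitly. In the Heisenberg case, $[X_j,Y_j]=-4\partial_t$ together with the $X_j,Y_j$ spans $\R^{2d+1}$ at every point. For Grushin, the iterated bracket $\operatorname{ad}_{\partial_x}^k(x^k\partial_y)=k!\,\partial_y$ spans together with $\partial_x$. For Carnot groups, the first layer Lie-generates $\mathfrak g$, so the brackets span the tangent space at $0$. Linear independence in each case is clear from the leading $\partial_{x_j}$ components (or, for Grushin, from $x^k\not\equiv0$).

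I expect the only real care to be needed for the Carnot and general entries of the table. There one must choose coordinates in which the dilations are diagonal, of the form \eqref{eq:dil}, with $1=\sigma_1\le\dots\le\sigma_n$. The plan is to use exponential coordinates adapted to a graded basis ordered by layer. In these coordinates the Baker--Campbell--Hausdorff formula shows that the fields have polynomial coefficients of the right weighted degree, so the degree-count observation above applies.
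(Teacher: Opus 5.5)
Your route is the paper's: a case-by-case check in which (H1) is a weighted degree count on polynomial coefficients and (H2) is an explicit bracket computation. The Carnot rows (Heisenberg, Engel, Cartan, free step-two, H-type) are handled through exponential coordinates adapted to the stratification. Your degree-count lemma is correct and states cleanly what the paper leaves implicit. As written, however, the proposal does not cover two rows of Table~\ref{tab:operators}.

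\textbf{The Grushin row.} This row is $\Delta_x+|x|^{2g}\Delta_y$ on $\R^N\times\R^k$ with arbitrary $N$, not only the planar pair $\partial_x$, $x^k\partial_y$. For $N\ge2$ you must exhibit \emph{smooth} fields whose squares add up to $|x|^{2g}\Delta_y$. The naive choice $|x|^g\partial_{y_j}$ is not smooth when $g$ is odd, already for the basic case $g=1$. The missing step is the paper's multinomial identity $|x|^{2g}=\sum_{|\nu|=g}\frac{g!}{\nu!}x^{2\nu}$. It yields the polynomial fields $(g!/\nu!)^{1/2}x^\nu\partial_{y_j}$.
\begin{itemize}
\item Since $x^\nu$ does not depend on $y$, their squares are $\frac{g!}{\nu!}x^{2\nu}\partial_{y_j}^2$.
\item Each coefficient has weighted degree $g=(g+1)-1$, so (H1) follows from your lemma.
\item The identity $\operatorname{ad}_{\partial_{x_i}}^{g}(x_i^g\partial_{y_j})=g!\,\partial_{y_j}$ gives the rank condition.
\end{itemize}

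\textbf{Linear independence.} For $N\ge2$ this family has more than $N+k$ members, e.g.\ $\partial_{x_1},\partial_{x_2},x_1\partial_y,x_2\partial_y$ in $\R^3$. So your first proposed meaning of linear independence, over $C^\infty$ functions on a dense open set, fails here. What the assumption requires is independence over $\R$ as differential operators. This holds because distinct monomials are linearly independent.

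\textbf{The product row.} This row is missing entirely. Take the two families acting in separate variables. They are homogeneous for the product dilation, and the Lie algebra they generate at $0$ is the direct sum, so the rank is $n_1+n_2$. Then relabel the coordinates so that the concatenated weights are nondecreasing with $\sigma_1=1$, as \eqref{eq:dil} requires. This gives $q=q_1+q_2$.
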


\begin{table}[htbp]
\centering
\small
\caption{Sums of squares covered by the theorems.}
\label{tab:operators}
\renewcommand{\arraystretch}{1.45}
\begin{tabular}{@{}llllc@{}}
\toprule
Operator & space & weights $\sigma$ & $q$ & $p_F$ \\
\midrule
Laplacian $\Delta$ & $\R^n$ & $(1^n)$ & $n$ & $1+\dfrac2n$ \\
Heisenberg $\Delta_H$ & $\R^{2N+1}$ & $(1^{2N},2)$ & $2N+2$ & $1+\dfrac1{N+1}$ \\
Engel group & $\R^4$ & $(1,1,2,3)$ & $7$ & $\dfrac97$ \\
Cartan group (free, step 3) & $\R^5$ & $(1,1,2,3,3)$ & $10$ & $\dfrac65$ \\
Free step-2, $m$ generators & $\R^{m+\binom m2}$ & $(1^m,2^{\binom m2})$ & $m^2$ & $1+\dfrac2{m^2}$ \\
H-type, $\dim V_1=m,\dim V_2=k$ & $\R^{m+k}$ & $(1^m,2^k)$ & $m+2k$ & $1+\dfrac2{m+2k}$ \\
Carnot, layers $V_1,\dots,V_s$ & $\R^n$ & $(j^{\dim V_j})_j$ & $\sum_jj\dim V_j$ & $1+\dfrac2q$ \\
Grushin $\Delta_x+|x|^{2g}\Delta_y$ & $\R^{N+k}$ & $(1^N,(g{+}1)^k)$ & $N+(g{+}1)k$ & $1+\dfrac2q$ \\
Martinet sum of squares & $\R^3$ & $(1,1,3)$ & $5$ & $\dfrac75$ \\
Product of two such operators & $\R^{n_1+n_2}$ & concatenated & $q_1+q_2$ & $1+\dfrac2q$ \\
\bottomrule
\end{tabular}
\end{table}

\begin{proof}
In each case one checks that the fields are polynomial, homogeneous of degree
one, and that iterated commutators span the tangent space at $0$.

\emph{Euclidean.} $X_j=\partial_{x_j}$, all weights equal one.

\emph{Heisenberg.} $X_j=\partial_{x_j}+2y_j\partial_z$,
$Y_j=\partial_{y_j}-2x_j\partial_z$ are homogeneous for
$(x,y,z)\mapsto(\lambda x,\lambda y,\lambda^2z)$, and $[X_j,Y_j]=-4\partial_z$.

\emph{Engel.} $X_1=\partial_1$, $X_2=\partial_2+x_1\partial_3+\tfrac12x_1^2\partial_4$
with weights $(1,1,2,3)$. Then $[X_1,X_2]=\partial_3+x_1\partial_4$ and
$[X_1,[X_1,X_2]]=\partial_4$, so the rank is $4$ at every point.

\emph{Carnot groups.} Let $G$ be a stratified group with
$\mathfrak g=V_1\oplus\dots\oplus V_s$, $[V_1,V_j]=V_{j+1}$, and let
$X_1,\dots,X_m$ be a basis of $V_1$. In exponential coordinates the $X_j$ are
polynomial and homogeneous of degree one for the dilations with weight $j$ on
$V_j$, and they satisfy H\"ormander's condition because $V_1$ generates
$\mathfrak g$ \cite{Folland1975,VaropoulosSaloffCosteCoulhon1992}; $q=\sum_jj\dim V_j$.
This covers the Heisenberg and Engel groups, the Cartan group, the free
step-2 group ($\dim V_2=\binom m2$, hence $q=m+m(m-1)=m^2$), and H-type
groups.

\emph{Grushin-type.} The operator $\Delta_x+|x|^{2g}\Delta_y$, $g\in\mathbb{N}$, on $\R^N\times\R^k$
is homogeneous of degree two under $(x,y)\mapsto(\lambda x,\lambda^{g+1}y)$.
By the multinomial identity $|x|^{2g}=\sum_{|\nu|=g}\frac{g!}{\nu!}x^{2\nu}$ it equals
$\sum_i\partial_{x_i}^2+\sum_{|\nu|=g,\,j}\bigl((g!/\nu!)^{1/2}x^\nu\partial_{y_j}\bigr)^2$.
These fields are homogeneous of degree one, and $g$ commutators with
the $\partial_{x_i}$ give $\partial_{y_j}$, which is the rank condition.

\emph{Martinet.} $X_1=\partial_x$, $X_2=\partial_y+x^2\partial_z$ with weights
$(1,1,3)$; $[X_1,X_2]=2x\partial_z$ vanishes at $0$, but
$[X_1,[X_1,X_2]]=2\partial_z$, so the rank at $0$ is $3$.

\emph{Products.} If $\Lcal_1,\Lcal_2$ satisfy the assumption on $\R^{n_1},\R^{n_2}$,
then $\Lcal_1+\Lcal_2$ on $\R^{n_1+n_2}$ does, with the concatenated weights
(after relabelling the coordinates so that the weights increase, as required in
\eqref{eq:dil}) and $q=q_1+q_2$.
\end{proof}

We add three comments on Proposition~\ref{prop:examples}.

\smallskip
\noindent\emph{Equal dimension, equal law.} The lifespan laws depend on $X$
only through $q$. For example, $\R^4$ and the Heisenberg group $\mathbb H^1$
both have $q=4$, hence identical lifespan laws for every $h$, although their
diffusions are very different. On the Engel group ($q=7$) with $h\equiv1$ and
$1<p<\frac97$ one gets $T_\eps\asymp\eps^{-2(p-1)/(2-7(p-1))}$, and on
$\mathbb H^N$ with $h\asymp(1+t)^\beta$ the critical exponent is
$1+(1+\beta)/(N+1)$.

\smallskip
\noindent\emph{Beyond group laws.} Euclidean space and Carnot groups carry a
group law for which the fields are left invariant. The Grushin and Martinet
operators do not come from a Carnot group on the same space; they are
projections of Carnot operators through the global lifting of
\cite{BiagiBonfiglioli2017,BiagiBonfiglioli2024}, and the heat-kernel bounds
\eqref{eq:Gaussian-general} still hold for them. Since our proofs use only
\eqref{eq:Gaussian-general}, no group structure on $\R^n$ is required. Grushin
operators with non-integer exponent, e.g.\ $\partial_x^2+|x|^{2\gamma}\partial_y^2$
with $\gamma\notin\mathbb N$, have non-smooth fields and are not covered.

\smallskip
\noindent\emph{Kolmogorov-type operators.} The Kolmogorov operator
$\partial_t-\Delta_x-\langle x,\nabla_y\rangle$ on $\R^N_x\times\R^N_y$ contains
a drift $X_0=\langle x,\nabla_y\rangle$, so the equation
$u_t-\Delta_xu-\langle x,\nabla_y\rangle u=h(t)u^p$ is not of the form
\eqref{eq:main}: the semigroup is not self-adjoint, and Assumption~\ref{ass:H}
describes sums of squares only. The operator is homogeneous for
$(x,y)\mapsto(\lambda x,\lambda^3y)$ with homogeneous dimension $q=N+3N=4N$,
its Gaussian kernel satisfies $\norm{P_t}_{L^1\to L^\infty}\asymp t^{-2N}=t^{-q/2}$,
and since the drift is divergence free, $P_t$ is Markov and Lebesgue measure is
invariant. The lower estimate of Theorem~\ref{thm:master}-(a), and hence
small-data global existence for $\Hp(\infty)<\infty$, uses only positivity,
contractivity on $L^1$ and $L^\infty$, strong continuity on $L^1$ and the decay
$t^{-q/2}$; therefore it applies with $q=4N$. We expect the upper estimate to
extend as well, with $\CC$ replaced by the Kolmogorov quasi-norm, but we have
not checked the details and we do not claim the two-sided law in this case. The
same remark applies to the other homogeneous Kolmogorov--Fokker--Planck
operators of Lanconelli and Polidoro type \cite{LanconelliPolidoro1994}.

\subsection{Explicit lifespan laws}\label{subsec:laws}

Whenever $h$ has a recognizable shape, the law \eqref{eq:master} can be turned
into an explicit formula for $T_\eps$ in two elementary steps: compute
$\Hp(T)\asymp G(T)$ for large $T$, and then solve $G(T_\eps)\asymp\eps^{-(p-1)}$
(Section~\ref{sec:applications}). Table~\ref{tab:examples} collects
representative cases.

\smallskip
\noindent\emph{A worked example.} Let $h\equiv1$ and $\alpha<1$, i.e.\ $p<1+2/q$.
Then $\Hp(T)=\int_0^T(1+t)^{-\alpha}\dd t\asymp T^{1-\alpha}$, and
$T_\eps^{1-\alpha}\asymp\eps^{-(p-1)}$ gives
$T_\eps\asymp\eps^{-(p-1)/(1-\alpha)}$, which is the polynomial law
$\eps^{-2(p-1)/(2-q(p-1))}$ in Theorem~\ref{thm:powerlog}-(a) below. If
$\alpha=1$, then $\Hp(T)=\log(1+T)$ and one finds the exponential law
$\log T_\eps\asymp\eps^{-(p-1)}$; if $\alpha>1$, then $\Hp(\infty)<\infty$ and
small data are global.

\smallskip
\noindent\emph{Conventions.} Below, $p_F:=1+2/q$; an asymptotic relation on $h$
is assumed for $t\ge T_h$ with some $T_h\ge e$; statements about $T_\eps$ are
for small $\eps$; and $L_1(t)=\log(e+t)$, $L_{j+1}=\log(e+L_j)$ are the iterated
logarithms.

\begin{theorem}\label{thm:powerlog}
Suppose that Assumption~\ref{ass:H} and \eqref{eq:h-ass} hold.
\begin{enumerate}[label=\textup{(\alph*)}]
\item \emph{$h\equiv1$.} The Fujita exponent is $p_F=1+2/q$, and
\[
 T_\eps\asymp\eps^{-\frac{2(p-1)}{2-q(p-1)}}\ \ (1<p<p_F),\qquad
 \log T_\eps\asymp\eps^{-(p-1)}\ \ (p=p_F),
\]
while small data are global for $p>p_F$.
\item \emph{$h\asymp(1+t)^\beta$.} With $\rho:=1+\beta-\alpha$:
$T_\eps\asymp\eps^{-(p-1)/\rho}$ if $\rho>0$; $\log T_\eps\asymp\eps^{-(p-1)}$
if $\rho=0$; small data are global if $\rho<0$. Thus the critical exponent is
$p_F(\beta)=1+2(1+\beta)/q$ for $\beta>-1$, and for $\beta\le-1$ small data
are global for all $p>1$.
\item \emph{$h\asymp(1+t)^\beta[\log(e+t)]^\gamma$, $\rho>0$.}
$T_\eps\asymp\eps^{-\frac{p-1}\rho}(\log\frac1\eps)^{-\frac\gamma\rho}$;
for $\rho<0$ small data are global. The case $\rho=0$ is part \textup{(d)}
with $j_*=1$.
\item \emph{Critical hierarchy.} Let $\beta>-1$, $p=p_F(\beta)$, $j_*\ge1$,
$\chi\in\R$ and
$h\asymp(1+t)^\beta\prod_{j<j_*}L_j(t)^{-1}\,L_{j_*}(t)^\chi$. If $\chi>-1$,
then $L_{j_*}(T_\eps)\asymp\eps^{-(p-1)/(\chi+1)}$; if $\chi=-1$, then
$L_{j_*+1}(T_\eps)\asymp\eps^{-(p-1)}$; if $\chi<-1$, small data are global.
In particular $h\asymp(1+t)^\beta/L_1$ gives
$\log\log T_\eps\asymp\eps^{-(p-1)}$, and $h\asymp(1+t)^\beta/(L_1L_2)$ gives
$\log\log\log T_\eps\asymp\eps^{-(p-1)}$.
\end{enumerate}
\end{theorem}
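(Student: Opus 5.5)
The plan is to derive every part of the statement from Theorem~\ref{thm:master}, and in particular from the two-sided law \eqref{eq:master} together with the dichotomy (i)/(ii). No further PDE analysis is needed. The work in each case reduces to two elementary steps. First, compute an explicit function $G$ with $\Hp(T)\asymp G(T)$ for $T\ge T_h'$, where $T_h'$ is some threshold depending only on $h$. Second, invert the relation $G(T_\eps)\asymp\eps^{-(p-1)}$.

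One point needs care before the inversion is legitimate. We must know that $T_\eps\to\infty$ as $\eps\to0$, so that the asymptotics of $h$ (valid only for $t\ge T_h$) are the relevant ones. This follows from the left inequality in \eqref{eq:master}: $\Hp(T_\eps)\ge c_{\rm fp}\eps^{-(p-1)}\to\infty$, while $\Hp$ is finite on bounded intervals because $h\in L^1_{\rm loc}$. For $T\ge T_h$ we then split
\[\Hp(T)=\Hp(T_h)+\int_{T_h}^T h(t)(1+t)^{-\alpha}\dd t.\]
Whenever $G(T)\to\infty$, the constant $\Hp(T_h)$ is absorbed into the second term for large $T$. Consequently $c\,\eps^{-(p-1)}\le G(T_\eps)\le C\eps^{-(p-1)}$ for all small $\eps$.

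Whenever $\int^\infty h(1+t)^{-\alpha}<\infty$, we have $\Hp(\infty)<\infty$, and Theorem~\ref{thm:master}-(i) gives small-data global existence. Conversely, $\Hp(\infty)=\infty$ gives blow-up for all $\eps$ by (ii). This settles the Fujita and critical-exponent statements: for instance, in (b) we have $\Hp(\infty)=\infty$ if and only if $\beta-\alpha\ge-1$, that is $p\le 1+2(1+\beta)/q$. When $\beta\le-1$ the integral converges for every $p>1$.

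The computations proceed case by case.
\begin{itemize}[nosep,leftmargin=1.6em]
\item \emph{Case (a).} This is (b) with $\beta=0$.
\item \emph{Case (b).} With $\rho=1+\beta-\alpha$ we get $G(T)=T^\rho$ if $\rho>0$ and $G(T)=\log T$ if $\rho=0$. Solving $G(T_\eps)\asymp\eps^{-(p-1)}$ gives $T_\eps\asymp\eps^{-(p-1)/\rho}$ and $\log T_\eps\asymp\eps^{-(p-1)}$, respectively.
\item \emph{Case (c).} For $\rho>0$, a standard comparison (integration by parts, or splitting at $T/2$) gives $\int^T t^{\rho-1}(\log t)^\gamma\dd t\asymp T^\rho(\log T)^\gamma$. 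We must then invert $T^\rho(\log T)^\gamma\asymp\eps^{-(p-1)}$. Taking logarithms gives $\log T_\eps\asymp\log(1/\eps)$, because $\rho>0$ makes the power dominate. Substituting back yields $T_\eps^\rho\asymp\eps^{-(p-1)}(\log\frac1\eps)^{-\gamma}$. For $\rho<0$ the integral converges.
\item \emph{Case (d).} Here $p=p_F(\beta)$ means $\alpha=1+\beta$, so $h(1+t)^{-\alpha}\asymp(1+t)^{-1}\prod_{j<j_*}L_j^{-1}L_{j_*}^\chi$. Using $\frac{d}{dt}L_{j}(t)\asymp (e+t)^{-1}\prod_{i<j}L_i^{-1}$ for large $t$, proved by induction on $j$, this integrand is comparable to $L_{j_*}'L_{j_*}^\chi$. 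Hence $G=L_{j_*}^{\chi+1}$ if $\chi>-1$, $G=\log L_{j_*}\asymp L_{j_*+1}$ if $\chi=-1$, and the integral converges if $\chi<-1$. The inversion is then immediate.
\end{itemize}

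The main obstacle is purely technical. In (d) we need the derivative comparison for the iterated logarithms with the shifted arguments $e+L_j$; the inductive derivative formula handles this. In (c), the inversion must be done carefully when $\gamma<0$, and the logarithmic step covers that case as well. The passage from "$\asymp$" for $h$ to "$\asymp$" for $\Hp$ uses only $h\ge0$ and the lower and upper comparability constants.
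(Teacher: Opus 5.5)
Your proposal is correct and follows the paper's proof essentially step by step. You reduce everything to the master law via $T_\eps\to\infty$ and the transfer $G(T_\eps)\asymp\eps^{-(p-1)}$, compute $\Hp$ case by case, and in (d) use the inductive derivative comparison for the iterated logarithms together with $0\le L_{j_*+1}-\log L_{j_*}\le\log(1+e)$. The only differences are cosmetic and both are valid: you treat (a) as the case $\beta=0$ of (b), and in (c) you invert by taking logarithms ($\log T_\eps\asymp\log\frac1\eps$, then substitute back) instead of using the paper's Lemma~\ref{lem:asympt}-(ii).
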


\begin{inwords}
Parts (a) and (b) give polynomial lifespans below the critical exponent and
lifespans of exponential type, $\log T_\eps\asymp\eps^{-(p-1)}$, at the
critical exponent $p_F(\beta)=1+2(1+\beta)/q$; above it small data are global.
Part (c) shows that a logarithmic factor in $h$ changes the polynomial law only
by a logarithmic factor. Part (d) shows what happens at the critical exponent
when $h$ carries iterated logarithms: at the borderline case $\chi=-1$, each
additional logarithm in the denominator of $h$ adds one more exponential to the
lifespan.
\end{inwords}

\begin{remark}[The autonomous case $h\equiv1$]\label{rem:autonomous}
For $h\equiv1$, Theorem~\ref{thm:powerlog}-(a) states that $p_F=1+2/q$ and
\[
 T_\eps\asymp\eps^{-\left(\frac1{p-1}-\frac q2\right)^{-1}}\ \ (1<p<p_F),
 \qquad
 \log T_\eps\asymp\eps^{-(p-1)}\ \ (p=p_F).
\]
For $\Lcal=\Delta$ on $\R^n$ ($q=n$) these are the lifespan orders of Lee and
Ni \cite{LeeNi1992}, with $p_F=1+2/n$ going back to Fujita \cite{Fujita1966}.
On the Heisenberg group $\mathbb H^N$ ($q=2N+2$, so $q/2=N+1$ and
$p_F=1+1/(N+1)$, the Fujita exponent of Zhang \cite{Zhang1998}) they are the
polynomial and exponential lifespans of Georgiev and Palmieri
\cite{GeorgievPalmieri2021}. For the Grushin operator
$\Delta_x+|x|^2\Delta_y$ on $\R^N\times\R^k$ ($q=N+2k$) we obtain the
analogous orders with $p_F=1+2/(N+2k)$ (see \cite{OliveiraViana2024,Kogoj1}
for the well-posedness theory and \cite{ChatzakouKassymovRuzhansky2026} for
the critical exponent); see also \cite{FinoKirane2026,FinoViana2026a} for
related Grushin problems.
\end{remark}

\begin{theorem}\label{thm:fast}
Suppose that Assumption~\ref{ass:H} and \eqref{eq:h-ass} hold.
\begin{enumerate}[label=\textup{(\alph*)}]
\item \emph{Exponential growth.} If $h\asymp e^{\theta t}(1+t)^\beta$,
$\theta>0$, then $T_\eps<\infty$ for all $\eps>0$ and
$T_\eps=\frac{p-1}\theta\log\frac1\eps+\frac{\alpha-\beta}\theta
\log\log\frac1\eps+O(1)$.
\item \emph{Integrable reactions.} If $h(t)(1+t)^{-\alpha}\in L^1(0,\infty)$,
small data are global. This holds for all $p>1$ if $h\le Ce^{-\theta t}(1+t)^\beta$
with $\theta>0$ or if $h$ has bounded support $\subset[0,T_{\rm off}]$; then
$u(t)=P_{t-T_{\rm off}}u(T_{\rm off})$ for $t\ge T_{\rm off}$.
\item \emph{Periodic reactions.} If $h\ge0$ is $\varpi$-periodic with positive
mean, then $\Hp(T)\asymp\int_1^T(1+t)^{-\alpha}\dd t$ for large $T$ when the
latter diverges, and $\Hp(\infty)<\infty$ otherwise: the laws are those of
\textup{(a)} in Theorem~\ref{thm:powerlog}.
\item \emph{Dense blocks.} For $h=\one_{\mathcal A}$,
$\mathcal A=\bigcup_{k\ge1}[2^{2k},2^{2k+1}]$, the Fujita exponent and the
lifespan orders are again those of Theorem~\ref{thm:powerlog}-(a).
\item \emph{Sparse blocks.} For $h=\one_{\mathcal A_\vartheta}$,
$\mathcal A_\vartheta=\bigcup_{k\ge1}[2^k,2^k+2^{\vartheta k}]$,
$0\le\vartheta<1$: $T_\eps\asymp\eps^{-(p-1)/(\vartheta-\alpha)}$ if
$\alpha<\vartheta$, $\log T_\eps\asymp\eps^{-(p-1)}$ if $\alpha=\vartheta$,
and small data are global if $\alpha>\vartheta$. The effective threshold is
$1+2\vartheta/q$.
\end{enumerate}
Moreover, for every $h$, if $h=0$ a.e.\ on $(t_1,t_2)$, then
$T_\eps\notin(t_1,t_2]$.
\end{theorem}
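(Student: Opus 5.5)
Every part of Theorem~\ref{thm:fast} will be reduced to Theorem~\ref{thm:master}. The plan is to compute $\Hp(T)$ asymptotically for each coefficient $h$ and then invert the two-sided law \eqref{eq:master}, using (i) of Theorem~\ref{thm:master} whenever $\Hp(\infty)<\infty$.

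\emph{(a)} For $h\asymp e^{\theta t}(1+t)^\beta$, integration by parts (or Laplace's method) gives
\[
\Hp(T)\asymp e^{\theta T}T^{\beta-\alpha}\qquad\text{for large }T,
\]
so $\Hp(\infty)=\infty$ and $T_\eps<\infty$. Taking logarithms in \eqref{eq:master} yields
\[
\theta T_\eps+(\beta-\alpha)\log T_\eps=(p-1)\log\tfrac1\eps+O(1).
\]
This forces $T_\eps\sim\frac{p-1}\theta\log\frac1\eps$. Substituting into the $\log T_\eps$ term gives the stated two-term expansion with an $O(1)$ remainder. The multiplicative constants in \eqref{eq:master} only contribute additive $O(1)$ after taking logarithms, which is why this case is sharper than the others.

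\emph{(b)} The global-existence assertion is exactly (i) of Theorem~\ref{thm:master}. Both exponential decay and bounded support make $h(1+t)^{-\alpha}$ integrable for every $p$. For bounded support, $h=0$ on $(T_{\rm off},\infty)$, so the Duhamel formula restarted at $T_{\rm off}$ (the semigroup property of the mild formulation) gives $u(t)=P_{t-T_{\rm off}}u(T_{\rm off})$.

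\emph{(c)} For periodic $h$ with mean $\bar h>0$, write $\int_0^t h=\bar h t+O(1)$ and integrate by parts against the decreasing weight $(1+t)^{-\alpha}$. This gives
\[
\Hp(T)=\bar h\int_0^T(1+t)^{-\alpha}\dd t+O(1).
\]
Hence $\Hp(T)$ is comparable to the autonomous integral when that integral diverges, and is finite otherwise; then Theorem~\ref{thm:powerlog}-(a) applies.

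\emph{(d), (e)} Here one sums over the blocks. In (d), the $k$-th block contributes $\asymp 2^{2k(1-\alpha)}$, while $\int_{2^{2k}}^{2^{2k+2}}(1+t)^{-\alpha}\dd t$ has the same order. So $\Hp(T)$ is comparable to $\int_0^T(1+t)^{-\alpha}\dd t$, up to the position of $T$ inside a block. In (e), the $k$-th block contributes $\asymp2^{(\vartheta-\alpha)k}$, which sums to $\asymp T^{\vartheta-\alpha}$, to $\log T$, or to a bounded quantity according to the sign of $\vartheta-\alpha$.

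The main obstacle is inversion when $\Hp$ has plateaus, since then $\Hp(T)\asymp G(T)$ holds only at block endpoints. At an intermediate $T$ one can only compare with the neighbouring endpoints $2^k$ and $2^{k+1}$, whose $G$-values differ by a bounded factor. Inverting $G(T_\eps)\asymp\eps^{-(p-1)}$ therefore still gives $T_\eps$ up to constants in the power cases, and up to additive constants in the logarithmic cases. I would verify this carefully in (e), where the gaps dominate the blocks.

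\emph{Final claim.} If $h=0$ a.e.\ on $(t_1,t_2)$ and $T_\eps\in(t_1,t_2]$, then $u(t_1)$ is bounded. The equation on $(t_1,t_2)$ is linear, so $u(t)=P_{t-t_1}u(t_1)$ there, and $\norm{u}_\infty$ stays bounded up to $t_2$. The mild solution therefore extends past $T_\eps$, by the blow-up alternative of the local theory (the mild analogue of Theorem~\ref{thm:regularity}-(d)). This contradicts maximality, so $T_\eps\notin(t_1,t_2]$.
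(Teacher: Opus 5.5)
Your proposal is correct and follows the paper's proof essentially step by step: you compute $\Hp$ for each $h$ and invert \eqref{eq:master}, using that $T_\eps\to\infty$ as $\eps\to0$. When $\Hp(\infty)<\infty$ you invoke Theorem~\ref{thm:master}-(i), and you get the final claim from the restarted Duhamel formula plus the blow-up alternative of Lemma~\ref{lem:local}-(iii). The only variation is in (c), where your integration by parts against $\int_0^th=\bar h t+O(1)$ replaces the paper's period-by-period sandwich, and it even gives the sharper $\Hp(T)=\bar h\int_0^T(1+t)^{-\alpha}\dd t+O(1)$.

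The plateau ``obstacle'' you raise for (d), (e) is not one. Since $G$ changes by at most a bounded factor across each $[2^k,2^{k+1}]$, your endpoint comparison already gives $\Hp(T)\asymp G(T)$ for all large $T$, not only at block endpoints. Inverting then yields $\log T_\eps\asymp\eps^{-(p-1)}$ in the critical cases. That is multiplicative, not additive, control of $\log T_\eps$, and it is exactly what the statement asks.
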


\begin{inwords}
If $h$ grows exponentially, the lifespan is only logarithmic in $1/\eps$:
$T_\eps\approx\frac{p-1}{\theta}\log\frac1\eps$. Reactions that decay fast
enough, or are switched off after a finite time, are harmless for small data.
Periodic reactions with positive mean and dense blocks behave like $h\equiv1$.
For sparse blocks the active intervals are so thin that the effective critical
exponent is $1+2\vartheta/q$ instead of $1+2/q$. Finally, blow-up can never
occur while the reaction is switched off.
\end{inwords}

\begin{table}[htbp]
\centering
\small
\caption{Representative laws; $\rho=1+\beta-\alpha$.}
\label{tab:examples}
\begin{tabular}{p{0.28\textwidth}p{0.24\textwidth}p{0.36\textwidth}}
\toprule
$h(t)$ & Regime & Small-data lifespan \\
\midrule
$1$ & $\alpha<1$ & $T_\eps\asymp\eps^{-(p-1)/(1-\alpha)}$ \\
$1$ & $\alpha=1$ & $\log T_\eps\asymp\eps^{-(p-1)}$ \\
$(1+t)^\beta$ & $\rho>0$ & $T_\eps\asymp\eps^{-(p-1)/\rho}$ \\
$(1+t)^\beta$ & $\rho=0$ & $\log T_\eps\asymp\eps^{-(p-1)}$ \\
$(1+t)^\beta\log^\gamma(e+t)$ & $\rho>0$ & $T_\eps\asymp\eps^{-(p-1)/\rho}\log^{-\gamma/\rho}\frac1\eps$ \\
$(1+t)^\beta\log^\gamma(e+t)$ & $\rho=0,\ \gamma>-1$ & $\log T_\eps\asymp\eps^{-(p-1)/(\gamma+1)}$ \\
$(1+t)^\beta/\log(e+t)$ & $\rho=0$ & $\log\log T_\eps\asymp\eps^{-(p-1)}$ \\
$e^{\theta t}(1+t)^\beta$ & always & $T_\eps\asymp\log\frac1\eps$ \\
periodic, positive mean & & as for $h\equiv1$ \\
$\one_{\mathcal A_\vartheta}$ & $\alpha<\vartheta$ & $T_\eps\asymp\eps^{-(p-1)/(\vartheta-\alpha)}$ \\
$\one_{\mathcal A_\vartheta}$ & $\alpha=\vartheta$ & $\log T_\eps\asymp\eps^{-(p-1)}$ \\
\bottomrule
\end{tabular}
\end{table}

The weight $(1+t)^{-\alpha}$ in $\Hp$ says that a unit of reaction is worth
less the later it acts, because the heat flow has spread the solution out.
Since $h$ is only locally integrable, this produces effects that no coefficient
$(1+t)^\beta$ can show. The next theorem collects three of them. The
heuristic of Subsection~\ref{subsec:heuristic} predicts the first one: a pulse
of mass $M$ at time $T_0$ meets a solution of height about
$\eps(1+T_0)^{-q/2}$, so it matters only when
$M\bigl(\eps(1+T_0)^{-q/2}\bigr)^{p-1}$ is of order one, that is, when
$\eps\approx(1+T_0)^{q/2}M^{-1/(p-1)}$.

\begin{theorem}\label{thm:nonstandard}
Suppose that Assumption~\ref{ass:H} and \eqref{eq:h-ass} hold; constants depend only on
$X$, $p$, $u_0$. Let $\eps_*$ be as in Theorem~\ref{thm:threshold}.
\begin{enumerate}[label=\textup{(\roman*)}]
\item \emph{Delayed pulse.} Let $\supp h\subset[T_0,T_0+1]$ and
$M:=\int h\in(0,\infty)$. Then $\Hp(\infty)<\infty$ and
$c\,(1+T_0)^{q/2}M^{-\frac1{p-1}}\le\eps_*\le C\,(1+T_0)^{q/2}M^{-\frac1{p-1}}$.
So $\eps_*$ depends on the mass $M$ and not on its distribution in the window,
and a delay $T_0$ multiplies $\eps_*$ by $(1+T_0)^{q/2}$ for every $p>1$.
\item \emph{Sparse bursts.} For $h=\one_{\mathcal A_0}$ (unit bursts at
$[2^k,2^k+1]$) small data are global for \emph{every} $p>1$, although
$h\not\to0$ and $\int h=\infty$. For $h=\one_{\mathcal A_\vartheta}$ with
$0<\vartheta<1$ and $\alpha\le\vartheta$, for every $\eps>0$
\[
 T_\eps\in\bigcup_{k\ge1}\bigl(2^k,2^k+2^{\vartheta k}\bigr]:
\]
blow-up can only occur while the reaction is switched on, and this set has
relative size $\to0$ in $[2^k,2^{k+1}]$.
\item \emph{Staircase lifespan.} Let $1<p<p_F$, $s_k:=2^{2^k}$ and
$h=\one_{\mathcal A}$, $\mathcal A=\bigcup_{j\ge1}[s_{2j},s_{2j+1})$. Then
$\Hp(\infty)=\infty$ and
\[
 \liminf_{\eps\to0}\frac{\log T_\eps}{\log(1/\eps)}=\frac{p-1}{1-\alpha},
 \qquad
 \limsup_{\eps\to0}\frac{\log T_\eps}{\log(1/\eps)}\ge\frac{2(p-1)}{1-\alpha}.
\]
So there is no $\kappa$ with $T_\eps\asymp\eps^{-\kappa}$. Indeed there are
intervals $[\eps_j^-,\eps_j^+]$ with $\eps_j^+/\eps_j^-\ge s_{2j}^{\,c_*}$,
$c_*>0$, on which $s_{2j}<T_\eps\le2s_{2j}$.
\end{enumerate}
\end{theorem}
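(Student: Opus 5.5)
The plan is to derive all three parts from Theorems~\ref{thm:master} and~\ref{thm:threshold}, applied to the specific $h$. For (i) and (ii) I also use two facts. First, blow-up cannot occur while $h=0$: by the final assertion of Theorem~\ref{thm:fast}, if $h=0$ a.e.\ on $(t_1,t_2)$ then $T_\eps\notin(t_1,t_2]$, because there the solution is the linear flow and is bounded. Second, a single-time blow-up estimate: the mild solution dominates the solution that starts at time $T_0$ from $\eps P_{T_0}u_0$.

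\emph{(i) Delayed pulse.} Here $\Hp(\infty)=\int h(1+t)^{-\alpha}\asymp(1+T_0)^{-\alpha}M$, since $(1+t)\asymp(1+T_0)$ on the window. Theorem~\ref{thm:threshold}-(ii) then gives
\[
\eps_*\ge\bigl(c_{\rm fp}/\Hp(\infty)\bigr)^{1/(p-1)}\ge c\,(1+T_0)^{\alpha/(p-1)}M^{-1/(p-1)}=c\,(1+T_0)^{q/2}M^{-1/(p-1)}.
\]

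For the upper bound, Theorem~\ref{thm:threshold}-(ii) is not directly usable: $\Hp(T_1)$ may dominate when $T_0<T_1$, and $C_{\rm up}$ only controls the integral from $T_1$. I would therefore argue directly. Up to time $T_0$ the solution is $\eps P_{T_0}u_0$. By the Gaussian lower bound it is $\ge c\,\eps(1+T_0)^{-q/2}$ on a ball of radius $\asymp(1+T_0)^{1/2}$. On the window, compare with the ODE $v'=hv^p$ using a cutoff subsolution (Kaplan-type eigenfunction argument) localized on that ball. The diffusion loss over a unit time window on a ball of radius $\ge1$ costs only a constant factor. The ODE blows up within the window once $(p-1)M\,(c\,\eps(1+T_0)^{-q/2})^{p-1}\ge1$, which gives $\eps_*\le C(1+T_0)^{q/2}M^{-1/(p-1)}$. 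This is the delicate step: the argument must handle an arbitrary distribution of $h$ inside $[T_0,T_0+1]$ and lose only constants. Testing against the backward semigroup $P_{T_0+1-t}\phi$, with $\phi$ the normalized Gaussian at scale $1+T_0$, and applying Jensen yields the inequality $y'\ge c\,h\,(1+T_0)^{-\frac q2(p-1)}y^p$ for $y(t)=\int u(t)P_{T_0+1-t}\phi$. This is exactly the ODE comparison above, and it does not depend on how $h$ is distributed.

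\emph{(ii) Sparse bursts.} For $h=\one_{\mathcal A_0}$, $\Hp(\infty)\asymp\sum_k2^{-\alpha k}<\infty$ for every $p>1$, so Theorem~\ref{thm:master}-(i) applies. For $0<\vartheta<1$ and $\alpha\le\vartheta$, $\Hp(\infty)\asymp\sum_k2^{(\vartheta-\alpha)k}=\infty$, so $T_\eps<\infty$ by Theorem~\ref{thm:master}-(ii). Since $h=0$ on each gap $(2^k+2^{\vartheta k},2^{k+1})$ and on $(0,2)$, the switched-off statement forces $T_\eps$ into $\bigcup_k(2^k,2^k+2^{\vartheta k}]$. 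The relative size of this set in $[2^k,2^{k+1}]$ is $2^{(\vartheta-1)k}\to0$.

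\emph{(iii) Staircase.} Compute $\Hp$ on the blocks. With $1-\alpha>0$, $\Hp(s_{2j+1})\asymp s_{2j+1}^{1-\alpha}$, because the last block dominates ($s_{2j+1}=s_{2j}^2$). On the gap $[s_{2j+1},s_{2j+2}]$, $\Hp$ is constant. Theorem~\ref{thm:master}-(ii) gives $\Hp(T_\eps)\asymp\eps^{-(p-1)}$, and the plateau forces $T_\eps$ to lie in an active block. If $T_\eps\in[s_{2j},s_{2j+1}]$, then $\Hp(T_\eps)\asymp s_{2j-1}^{1-\alpha}+T_\eps^{1-\alpha}-s_{2j}^{1-\alpha}$, up to constants.

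For the liminf, $\Hp(T)\lesssim T^{1-\alpha}$ always, so $T_\eps\gtrsim\eps^{-(p-1)/(1-\alpha)}$. Equality of order is attained along levels where $T_\eps\asymp s_{2j+1}$.

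For the limsup, take $\eps$ with $\eps^{-(p-1)}$ just above $C s_{2j-1}^{1-\alpha}$ but below $c\,s_{2j}^{1-\alpha}$, say at $\eps^{-(p-1)}\asymp s_{2j-1}^{1-\alpha}$ times a large constant. Then $T_\eps\ge s_{2j}=s_{2j-1}^2$, so $\log T_\eps\ge 2\log s_{2j-1}\approx\frac{2(p-1)}{1-\alpha}\log\frac1\eps$. The interval of such $\eps$ is those with $\Hp(s_{2j})<c_{\rm fp}\eps^{-(p-1)}$ and $2C_{\rm up}\eps^{-(p-1)}\le\Hp(2s_{2j})$. The latter is $\gtrsim s_{2j}^{1-\alpha}$, while $\Hp(s_{2j})\asymp s_{2j}^{(1-\alpha)/2}$. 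This yields $\eps_j^+/\eps_j^-\gtrsim s_{2j}^{(1-\alpha)/(2(p-1))}$ and hence $c_*$. The main care needed here is bookkeeping of the constants $c_{\rm fp},C_{\rm up}$ against the growth of $s_k$, which is harmless because $s_{k+1}=s_k^2$ overwhelms any fixed constant.
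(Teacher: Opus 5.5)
Your proposal is correct and follows the paper's proof essentially step for step. In (i) you get the lower bound from Theorem~\ref{thm:threshold}-(ii) and the upper bound from the backward-semigroup/Jensen inequality on the pulse window at scale $R\asymp(1+T_0)^{1/2}$, which is exactly the argument of Theorem~\ref{thm:threshold}-(iii). In (ii) you use the switched-off exclusion, and in (iii) the trapping interval defined by $c_{\rm fp}\Sigma>\Hp(s_{2j})$ and $2C_{\rm up}\Sigma\le\Hp(2s_{2j})$.

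The only loose end is in the liminf: you claim, without proof, that there are levels with $T_\eps\asymp s_{2j+1}$. You do not need this. At the top of your own trapping interval you have $\Sigma\asymp s_{2j}^{1-\alpha}$ and $s_{2j}<T_\eps\le 2s_{2j}$, and this already gives $\log T_\eps/\log(1/\eps)\to(p-1)/(1-\alpha)$, as in the paper.
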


Items (i)--(ii) show that the naive heuristic ``the reaction wins when $\int h$
is large'' fails for concentrated $h$: what counts is the weighted integral
$\int h(t)(1+t)^{-\alpha}\dd t$. Item (iii) shows that a two-sided law for
$\Hp(T_\eps)$ cannot in general be turned into a power law for $T_\eps$: when
the active blocks are very far apart, the lifespan stays almost constant over a
whole range of amplitudes and then jumps.

\begin{remark}[What is new already for $\Lcal=\Delta$ on $\R^n$]\label{rem:new-Rn}
Several of the statements above are classical when $q=n$: the Fujita exponent
and the lifespan orders for $h\equiv1$ (Remark~\ref{rem:autonomous}), the
threshold $1+2(1+\beta)/n$ for $h\asymp(1+t)^\beta$ \cite{Meier1988,Meier1990},
and, for continuous positive $h$, the qualitative criteria of
\cite{MeglioliPunzo2026}: small-data global existence when
$\Hp(\infty)<\infty$, single-time blow-up criteria, and blow-up at $p=p_F$ for
$h\equiv1$ (see Remark~\ref{rem:comments}-(b)). As far as we could check, the
following remain new even in $\R^n$:
\begin{enumerate}[label=\textup{(\arabic*)}]
\item the two-sided law \eqref{eq:master} for \emph{every} $h\ge0$ in
$L^1_{\rm loc}$, with no continuity, monotonicity or regular variation, and
the dichotomy $\Hp(\infty)<\infty$ versus $\Hp(\infty)=\infty$ that it
contains, in particular blow-up for every $\eps>0$ when $\Hp(\infty)=\infty$
(for continuous positive $h$ the half $\Hp(\infty)<\infty\Rightarrow\eps_*>0$
is in \cite{MeglioliPunzo2026});
\item the critical amplitude $\eps_*$ and its two-sided control for delayed
pulses (Theorem~\ref{thm:nonstandard}-(i));
\item the critical hierarchy, i.e.\ the double- and triple-exponential
lifespans created by logarithmic factors in $h$ (Theorem~\ref{thm:powerlog}-(d));
\item the exponential-growth, periodic and intermittent laws of
Theorem~\ref{thm:fast}, and the effects of Theorem~\ref{thm:nonstandard}-(ii),(iii):
no Fujita exponent for sparse bursts, and a lifespan with no power law in
$\eps$;
\item the two-sided lifespan law of Section~\ref{sec:general} for
nonlinearities comparable to a power (qualitative criteria for such $f$ are in
\cite[Sect.~7]{MeglioliPunzo2026}).
\end{enumerate}
Our literature search was not exhaustive, and the list reflects what we found,
not a proof of priority.
\end{remark}

\subsection*{Method of proof}

\emph{Lower bound for the lifespan} (Theorem~\ref{thm:master}-(a)). We solve the
integral equation \eqref{eq:mild} by the contraction mapping principle in the
space of functions with
$\sup_t\{\norm{u(t)}_1+(1+t)^{q/2}\norm{u(t)}_\infty\}<\infty$. This norm is
adapted to the decay of the linear flow, and the nonlinear term is bounded by
$\Hp(T)\norm u^p$; this is where $\Hp$ comes from. The argument is in the
spirit of \cite{Weissler1981,QuittnerSouplet2019}.

\smallskip
\emph{Upper bound for the lifespan} (Theorem~\ref{thm:master}-(b)). We look at
the solution through a Gaussian window of width $R$ centered at the origin:
$F_R(t)=R^q[P_{\omega R^2-t}u(t)](0)$ is an average of $u(t)$ against a
Gaussian. By Jensen's inequality it satisfies an ordinary differential
inequality of the form $F_R'\ge h(t)R^{-q(p-1)}F_R^p$. On one time slab this is
the classical backward-semigroup argument of Weissler \cite{Weissler1981} and
Meier \cite{Meier1988} (see also \cite{QuittnerSouplet2019}); it is sharp off
the critical power, but it only bounds the contribution of a single scale and
therefore gives nothing at criticality. The test-function method of Mitidieri
and Pohozaev \cite{MitidieriPohozaev2001}, adapted to Grushin and Tricomi
operators in \cite{DAmbrosioLucente2003}, recovers the threshold $p\le1+2/q$
equally cheaply, and its refinement by Ikeda and Sobajima
\cite{IkedaSobajima2019} gives the sharp critical lifespan for $h\equiv1$, but
neither gives the law for general $h$. We add three things:
\begin{itemize}[nosep,leftmargin=1.6em]
\item a monotone Gaussian-envelope energy $E(R)$, which records the reaction
seen at all scales up to $R$;
\item a scale increment, saying that $E$ grows between two scales by an amount
proportional to the reaction in the corresponding slab, combined with an ODE
cap, saying that the reaction in one slab cannot be too large; together they
telescope along a geometric sequence of scales and sum the contributions of
\emph{all} scales;
\item a finite family of interlaced geometric grids of scales whose slabs cover
every time, which handles arbitrary $h$.
\end{itemize}

\subsection*{Organization of the paper}
Section~\ref{sec:prelim} collects the tools: the heat kernel of $\Lcal$, the
local theory of mild solutions, the regularity of classical solutions (the
proof of Theorem~\ref{thm:regularity}) and some elementary asymptotics.
Section~\ref{sec:proofs} proves Theorems~\ref{thm:master}, \ref{thm:threshold}
and~\ref{thm:apriori} and Corollary~\ref{cor:small}; the lower bound is in
Subsection~\ref{subsec:lower} and the upper bound, which is the heart of the
paper, in Subsection~\ref{subsec:upper}. Section~\ref{sec:applications} derives
the explicit laws, Theorems~\ref{thm:powerlog}--\ref{thm:nonstandard}, from
the master law. Section~\ref{sec:general} treats nonlinearities $f(u)$. A reader interested only in the results can stop after
Section~\ref{sec:intro}; a reader interested in the idea of the upper bound
should read ``Method of proof'' above and the plan at the beginning of
Subsection~\ref{subsec:upper}.

\section{Preliminaries}\label{sec:prelim}

This section collects the tools used in the proofs. Subsection~\ref{subsec:kernel}
recalls the heat kernel of $\Lcal$ and the properties of the semigroup $P_t$;
all of them are taken from the literature. Subsection~\ref{subsec:mild} gives
the local theory of mild solutions: uniqueness, short-time existence, the
blow-up alternative and comparison. These two subsections are all that is needed
for the lifespan law (Theorems~\ref{thm:master} and~\ref{thm:threshold}).
Subsection~\ref{subsec:classical} proves the regularity result
Theorem~\ref{thm:regularity} and shows that classical solutions are mild
solutions; it is used only for the results on classical solutions. Finally,
Subsection~\ref{subsec:asympt} contains elementary asymptotics used in
Section~\ref{sec:applications}.

\subsection{Homogeneous H\"ormander fields and the heat kernel}\label{subsec:kernel}

Assumption~\ref{ass:H} is the standing assumption of
\cite{BiagiBonfiglioli2017,BiagiBonfiglioli2024,BiagiBramanti2023,
ChatzakouKassymovRuzhansky2026}; (H1)--(H2) imply H\"ormander's condition at
every point. The coefficients $X_j^i$ are polynomials with $\partial_iX_j^i=0$
\cite{BiagiBonfiglioli2017}, so each $X_j$ is divergence free and $\Lcal$ is
formally self-adjoint in $L^2(\R^n)$. Let $\CC$ be the
Carnot--Carath\'eodory distance and $\BX(x,r)$ its balls. Then
$\CC(\delta_\lambda x,\delta_\lambda y)=\lambda\CC(x,y)$, and
\begin{equation}\label{eq:ball-origin}
 |\BX(0,r)|=r^q|\BX(0,1)|,\qquad r>0 .
\end{equation}
For other centers the volume is only bounded below by $cr^q$
\cite{BiagiBonfiglioliBramanti2022,BiagiBramanti2023}.

Let $\Gamma_t(x,y)$ be the global heat kernel of $\partial_t-\Lcal$
\cite{BiagiBonfiglioli2024}. We use the following facts, proved in
\cite{BiagiBonfiglioli2024,BiagiBramanti2023} and summarized in
\cite{ChatzakouKassymovRuzhansky2026}.
\begin{itemize}[leftmargin=1.6em]
\item \emph{Basic properties.} $\Gamma_t(x,y)=\Gamma_t(y,x)>0$,
$\int\Gamma_t(x,y)\dd y=1$, and
$\Gamma_{t+s}(x,y)=\int\Gamma_t(x,z)\Gamma_s(z,y)\dd z$. Moreover $\Gamma$ is
smooth for $t>0$ and solves the heat equation.
\item \emph{Gaussian bounds.} For some $\rho_G>1$,
\begin{equation}\label{eq:Gaussian-general}
 \frac{e^{-\rho_G\CC(x,y)^2/t}}{\rho_G|\BX(x,\sqrt t)|}
 \le\Gamma_t(x,y)\le
 \frac{\rho_G\,e^{-\CC(x,y)^2/(\rho_Gt)}}{|\BX(x,\sqrt t)|}.
\end{equation}
By \eqref{eq:ball-origin}, at the origin this becomes
\begin{equation}\label{eq:origin-gaussian}
 c_{\rm o}t^{-q/2}e^{-A_{\rm o}\CC(0,y)^2/t}\le\Gamma_t(0,y)\le
 C_{\rm o}t^{-q/2}e^{-a_{\rm o}\CC(0,y)^2/t}.
\end{equation}
\item \emph{The semigroup.} $P_tf=\int\Gamma_t(\cdot,y)f(y)\dd y$ is positivity
preserving and contractive on every $L^k$, $1\le k\le\infty$, conserves mass,
and is strongly continuous on $L^1$ (but not on $L^\infty$).
\item \emph{Decay.} By \cite{ChatzakouKassymovRuzhansky2026},
\begin{equation}\label{eq:L1Linfty}
 \norm{P_tf}_{k_2}\le Ct^{-\frac q2(\frac1{k_1}-\frac1{k_2})}\norm f_{k_1},
 \qquad1\le k_1\le k_2\le\infty .
\end{equation}
\end{itemize}
In other words, the heat flow of $\Lcal$ looks like the Euclidean heat flow, with
$n$ replaced by $q$ and the Euclidean distance replaced by $\CC$. This is all we
use about $\Lcal$.

\subsection{Mild solutions}\label{subsec:mild}

\begin{definition}\label{def:mild}
A mild solution on $[0,T)$ is a nonnegative $u\in C([0,T);L^1(\R^n))$,
bounded in $L^\infty$ on compact subintervals of $[0,T)$, with
\begin{equation}\label{eq:mild}
 u(t)=\eps P_tu_0+\int_0^th(s)P_{t-s}(u(s)^p)\dd s,\qquad 0\le t<T,
\end{equation}
the integral being $L^1$-valued. $T_\eps$ is the supremum of such $T$.
\end{definition}

Continuity is required only in $L^1$, because $P_t$ is not strongly continuous
on $L^\infty$. Since $\norm f_\infty$ is a supremum of continuous functionals
on $L^1$, we also have
$\norm{\int g\dd s}_\infty\le\int\norm g_\infty\dd s$ for $L^1$-Bochner
integrals; this is used throughout.

The following local theory is standard (Weissler \cite{Weissler1980},
\cite[Sect.~15]{QuittnerSouplet2019}); the only change is that $h\in L^1_{\rm loc}$
enters through $\Omega_h(\delta;\tau):=\sup_{s\le\tau}\int_s^{s+\delta}h\to0$ as
$\delta\to0$, which replaces the smallness of the time step.

\begin{lemma}\label{lem:local}
Assume \eqref{eq:h-ass}.
\begin{enumerate}[label=\textup{(\roman*)}]
\item Mild solutions are unique.
\item For $0\le v\in L^1\cap L^\infty$ and $K=\norm v_1+\norm v_\infty$, if
$p(2K)^{p-1}\Omega_h(\delta;t_0)\le\frac12$, there is a mild solution on
$[t_0,t_0+\delta]$ with initial value $v$ and $L^1,L^\infty$ norms $\le2K$. In
particular $T_\eps>0$.
\item If $T_\eps<\infty$, then $\limsup_{t\uparrow T_\eps}\norm{u(t)}_\infty=\infty$.
\end{enumerate}
\end{lemma}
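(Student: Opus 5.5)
The plan is the standard Weissler-type argument on the Duhamel formula \eqref{eq:mild}. The only new ingredient is that time smallness is replaced by smallness of $\Omega_h$, which tends to $0$ as $\delta\to0$ by absolute continuity of the integral of $h\in L^1_{\rm loc}$ on $[0,\tau+1]$. The elementary inequality used throughout is
\[
 |a^p-b^p|\le p\max(a,b)^{p-1}|a-b|,\qquad a,b\ge0 ,
\]
together with contractivity of $P_t$ on $L^1$ and $L^\infty$. Combined with the Bochner bound $\norm{\int g\dd s}_\infty\le\int\norm g_\infty\dd s$, these give
\[
 \Bigl\|\int_{t_0}^t h(s)P_{t-s}\bigl(w_1^p-w_2^p\bigr)\dd s\Bigr\|_k
 \le pM^{p-1}\int_{t_0}^t h(s)\norm{w_1(s)-w_2(s)}_k\dd s ,
\]
for $k\in\{1,\infty\}$ and $0\le w_i\le M$.

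\emph{Step (ii).} I would work in
\[
 Y=\Bigl\{w\in C([t_0,t_0+\delta];L^1):\ w\ge0,\ \sup_t\bigl(\norm{w(t)}_1+\norm{w(t)}_\infty\bigr)\le2K\Bigr\}
\]
with the metric $\sup_t\norm{w_1-w_2}_1$. $Y$ is complete, since $L^\infty$ balls are closed in $L^1$ by Fatou along an a.e.\ subsequence. On $Y$ I define
\[
 \Phi w(t)=P_{t-t_0}v+\int_{t_0}^th(s)P_{t-s}(w(s)^p)\dd s .
\]
\begin{itemize}[nosep,leftmargin=1.6em]
\item \emph{Self-map.} Using $\norm{w^p}_1\le\norm w_\infty^{p-1}\norm w_1$, one gets $\norm{\Phi w}_1+\norm{\Phi w}_\infty\le K+(2K)^p\Omega_h(\delta;t_0)\le K+K/p\le2K$, by the hypothesis $p(2K)^{p-1}\Omega_h\le\frac12$.
\item \emph{Contraction.} The displayed estimate with $k=1$ and $M=2K$ gives Lipschitz constant $p(2K)^{p-1}\Omega_h\le\frac12$.
\item \emph{Continuity.} $L^1$-continuity of $\Phi w$ follows from strong continuity of $P_t$ on $L^1$ and dominated convergence in $s$.
\end{itemize}
Banach's fixed point theorem gives the solution. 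With $t_0=0$ and $v=\eps u_0$ this yields $T_\eps>0$. Solutions on consecutive intervals concatenate into a mild solution by the semigroup property.

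\emph{Step (i).} Given two mild solutions $u,\tilde u$ on $[0,T)$, fix $\tau<T$ and let $M$ bound both in $L^\infty$ on $[0,\tau]$. Then $\phi(t)=\norm{u(t)-\tilde u(t)}_1$ satisfies
\[
 \phi(t)\le pM^{p-1}\int_0^th\phi\dd s .
\]
Gronwall's inequality with the $L^1$ kernel $h$ gives $\phi\equiv0$. Equivalently, one can iterate on short intervals where $pM^{p-1}\Omega_h<1$.

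\emph{Step (iii).} Suppose instead that $\norm{u(t)}_\infty\le M$ on $[0,T_\eps)$. By part (a) of Theorem~\ref{thm:apriori}, once the mild solution is known to be nonnegative, or directly from \eqref{eq:mild},
\[
 \norm{u(t)}_1\le\eps\norm{u_0}_1+M^{p-1}\int_0^{T_\eps}h\,\norm{u(s)}_1\dd s .
\]
Gronwall then gives $\norm{u(t)}_1\le K_1$ uniformly. Set $K=K_1+M$ and choose $\delta$ with $p(2K)^{p-1}\Omega_h(\delta;T_\eps)\le\frac12$; this is possible since $h$ is integrable on $[0,T_\eps+1]$. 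Starting from $t_0=T_\eps-\delta/2$, step (ii) extends $u$ to $T_\eps+\delta/2$, and uniqueness (i) makes the extension consistent. This contradicts maximality.

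The main obstacle is the uniformity of the step size in (iii). It is exactly why $\Omega_h$ is defined as a supremum over starting points $s\le\tau$ rather than at a fixed time. A secondary technical point is measurability and Bochner integrability of $s\mapsto h(s)P_{t-s}(w(s)^p)$ in $L^1$. I would handle this by noting that $w^p$ is $L^1$-continuous, since $L^\infty$-bounded $L^1$-continuous functions have $L^1$-continuous $p$-th powers by the displayed inequality, so only the scalar factor $h$ is merely integrable.
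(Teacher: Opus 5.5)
Your proposal is correct and follows the paper's sketch step for step. For (i) you use Gronwall in $L^1$ with the local Lipschitz bound of $\xi\mapsto\xi^p$; for (ii), Banach's fixed point theorem on nonnegative $w\in C([t_0,t_0+\delta];L^1)$ with $L^1$ and $L^\infty$ bounds $2K$ under the metric $\sup_t\norm{w_1-w_2}_1$; and for (iii), a Gronwall bound on the mass followed by a restart at $T_\eps-\delta/2$, with $\delta$ chosen uniformly through $\Omega_h(\delta;T_\eps)$.

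There are two small slips. In (iii), the integral in your $L^1$ inequality must run over $[0,t]$ rather than $[0,T_\eps]$, otherwise Gronwall does not apply. Also in (iii), take the mass bound directly from \eqref{eq:mild}, not from Theorem~\ref{thm:apriori}, whose proof itself relies on this lemma.
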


\begin{inwords}
Solutions are unique; they exist for a short time depending only on the size of
the data; and a solution that exists on a finite maximal interval must become
unbounded at its end. So $T_\eps<\infty$ means $\norm{u(t)}_\infty\to\infty$
along some sequence $t\uparrow T_\eps$.
\end{inwords}

\begin{proof}[Sketch]
(i) Gronwall in $L^1$, using the Lipschitz bound of $\xi\mapsto\xi^p$ on bounded
sets. (ii) Banach's fixed point theorem on the set of nonnegative
$w\in C([t_0,t_0+\delta];L^1)$ with $\norm w_1,\norm w_\infty\le2K$, with metric
$\sup_t\norm{w_1-w_2}_1$; the smallness of $\delta$ makes the Duhamel map a
contraction. (iii) If $\norm{u(t)}_\infty\le B$ on $[0,T_\eps)$, Gronwall bounds
$\norm{u(t)}_1$, and (ii) restarts the solution at $T_\eps-\delta/2$ with
$\delta$ chosen uniformly, a contradiction.
\end{proof}

The next lemma says that the Duhamel integral is continuous in time with values
in $L^1$; it is needed to check that the iterates below stay in
$C([0,T);L^1)$.

\begin{lemma}\label{lem:duhamel-cont}
If $g:[0,\tau]\to L^1$ is strongly measurable and bounded in $L^1$, then
$t\mapsto\int_0^th(s)P_{t-s}g(s)\dd s$ lies in $C([0,\tau];L^1)$.
\end{lemma}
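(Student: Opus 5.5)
The plan is to prove continuity of $\Phi(t):=\int_0^th(s)P_{t-s}g(s)\dd s$ at each $t\in[0,\tau]$ by splitting the increment into a short-interval term and a term where only the semigroup changes. Set $M:=\sup_s\norm{g(s)}_1$. First we check that the integral exists. The map $s\mapsto h(s)P_{t-s}g(s)$ is strongly measurable on $[0,t]$: $g$ is an a.e.\ limit of simple functions, and $(r,f)\mapsto P_rf$ is jointly continuous on $[0,\tau]\times L^1$, since $P$ is strongly continuous and contractive on $L^1$. Its $L^1$ norm is at most $Mh(s)$, which is integrable. So $\Phi(t)$ is a Bochner integral with $\norm{\Phi(t)}_1\le M\int_0^th$.

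For $0\le t<t'\le\tau$ we write
\[
\Phi(t')-\Phi(t)=\int_t^{t'}h(s)P_{t'-s}g(s)\dd s+\int_0^th(s)\bigl(P_{t'-s}-P_{t-s}\bigr)g(s)\dd s=:I_1+I_2 .
\]
By contractivity, $\norm{I_1}_1\le M\int_t^{t'}h$. This tends to $0$ as $t'-t\to0$, uniformly, by absolute continuity of the integral of $h\in L^1(0,\tau)$.

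For $I_2$ we use $P_{t'-s}-P_{t-s}=(P_{t'-t}-I)P_{t-s}$, so that $\norm{I_2}_1\le\int_0^th(s)\norm{(P_{t'-t}-I)P_{t-s}g(s)}_1\dd s$. The integrand tends to $0$ pointwise in $s$ as $t'\downarrow t$, by strong continuity of $P$ on $L^1$ applied to the fixed element $P_{t-s}g(s)$. It is dominated by $2Mh(s)$, so dominated convergence gives $\norm{I_2}_1\to0$. This proves right continuity.

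Left continuity works the same way, with the roles exchanged. For $t'<t$ the increment is $\Phi(t)-\Phi(t')=\int_{t'}^th\,P_{t-s}g\dd s+\int_0^{t'}h(s)(P_{t-t'}-I)P_{t'-s}g(s)\dd s$. The second integrand depends on $t'$ both through $P_{t-t'}-I$ and through the element $P_{t'-s}g(s)$. To handle this we rewrite it as $(P_{t-s}-P_{t'-s})g(s)$, extended by $0$ for $s>t'$, and use the joint continuity of $r\mapsto P_rf$ for fixed $f=g(s)$. This gives pointwise convergence to $0$ for each fixed $s<t$, again with domination by $2Mh$.

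The only delicate point is measurability of the integrand. It is handled by the joint continuity of $(r,f)\mapsto P_rf$, which follows from $\norm{P_rf-P_{r_0}f_0}_1\le\norm{f-f_0}_1+\norm{P_rf_0-P_{r_0}f_0}_1$.
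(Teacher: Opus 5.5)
Your proof is correct and uses the same splitting as the paper: a short-interval term bounded by $\sup_s\norm{g(s)}_1\int_t^{t'}h$, plus a semigroup-increment term handled by dominated convergence with weight $h$. The paper gets uniformity in $t$ in one step from the compactness of the orbit $\{P_\sigma g(s):0\le\sigma\le\tau\}$, which gives $\sup_{\sigma\le\tau}\norm{(P_\Delta-I)P_\sigma g(s)}_1\to0$ and avoids separate left and right limits; you instead prove continuity at each fixed $t$ from both sides, and you also check the Bochner measurability that the paper leaves implicit.
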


\begin{proof}
For $t<t'$ and $\Delta=t'-t$ the difference is
$\int_t^{t'}hP_{t'-s}g\dd s+\int_0^th(P_\Delta-I)P_{t-s}g\dd s$. The first term
is at most $\sup\norm g_1\int_t^{t'}h$. For the second, fix $s$: the set
$\{P_\sigma g(s):0\le\sigma\le\tau\}$ is compact in $L^1$ by strong continuity of
$P_\sigma$, so $\sup_{\sigma\le\tau}\norm{(P_\Delta-I)P_\sigma g(s)}_1\to0$ as
$\Delta\to0$, and this supremum is at most $2\sup\norm g_1$. Dominated
convergence with the weight $h$ shows that the second term tends to $0$
uniformly in $t$.
\end{proof}

Bigger data give bigger solutions and shorter lifespans, because $P_t$
preserves positivity and the nonlinearity $\xi\mapsto\xi^p$ is increasing.

\begin{lemma}[Comparison]\label{lem:comparison}
If $0<\eps_1\le\eps_2$, then $T_{\eps_1}\ge T_{\eps_2}$ and
$u_{\eps_1}\le u_{\eps_2}$ on $[0,T_{\eps_2})$.
\end{lemma}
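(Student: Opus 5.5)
The plan is to compare the two solutions through their Picard iterates. Both iterations are run inside the local existence scheme of Lemma~\ref{lem:local}. The key facts are that $P_t$ preserves positivity and that $\xi\mapsto\xi^p$ is increasing on $[0,\infty)$, so the Duhamel map
\[
 \Phi_\eps[w](t)=\eps P_tu_0+\int_0^th(s)P_{t-s}(w(s)^p)\dd s
\]
is order preserving in $w$ and nondecreasing in $\eps$. Consequently, if $0\le w_1\le w_2$ and $\eps_1\le\eps_2$, then $\Phi_{\eps_1}[w_1]\le\Phi_{\eps_2}[w_2]$ pointwise a.e. for every $t$.

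\emph{Step 1: local comparison.} Fix $\tau<\min(T_{\eps_1},T_{\eps_2})$. Start both iterations from $w^{(0)}_i=0$ and set $w^{(k+1)}_i=\Phi_{\eps_i}[w^{(k)}_i]$. By induction, $0\le w^{(k)}_1\le w^{(k)}_2$ for all $k$. Lemma~\ref{lem:duhamel-cont} keeps every iterate in $C([0,\tau];L^1)$.

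On a short interval $[t_0,t_0+\delta]$, where $\delta$ is chosen so that $p(2K)^{p-1}\Omega_h(\delta;\tau)\le\tfrac12$, the iterates converge in $L^1$ to the respective mild solutions; this is the contraction in Lemma~\ref{lem:local}(ii). Here $K$ bounds the $L^1$ and $L^\infty$ norms of both solutions on $[0,\tau]$. Since pointwise order is preserved under $L^1$ limits (after passing to an a.e. subsequence), we get $u_{\eps_1}\le u_{\eps_2}$ on that interval. By uniqueness, Lemma~\ref{lem:local}(i), these limits are the solutions themselves.

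To restart at $t_0+\delta$, I would apply the same argument with the initial data $u_{\eps_1}(t_0+\delta)\le u_{\eps_2}(t_0+\delta)$ and the shifted Duhamel formula. That formula holds for mild solutions by the semigroup property. Because $\delta$ depends only on $K$ and $\tau$, finitely many steps cover $[0,\tau]$.

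A cleaner alternative avoids restarting. Set $d=(u_{\eps_1}-u_{\eps_2})_+$. Then $d\le \int_0^t h\,P_{t-s}\bigl((u_{\eps_1}^p-u_{\eps_2}^p)_+\bigr)\dd s\le pK^{p-1}\int_0^t h\,P_{t-s}d\,\dd s$. Taking $L^1$ norms and using Gronwall with the weight $h$ gives $d\equiv0$. I would actually use this version.

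\emph{Step 2: lifespans.} Suppose $T_{\eps_1}<T_{\eps_2}$. Then $T_{\eps_1}<\infty$, so by Lemma~\ref{lem:local}(iii) $\limsup_{t\uparrow T_{\eps_1}}\norm{u_{\eps_1}(t)}_\infty=\infty$. Step 1, applied on $[0,\tau]$ for every $\tau<T_{\eps_1}$, gives $u_{\eps_1}\le u_{\eps_2}$ there. Hence
\[
 \norm{u_{\eps_1}(t)}_\infty\le\sup_{[0,T_{\eps_1}]}\norm{u_{\eps_2}}_\infty<\infty,
\]
where finiteness holds because $[0,T_{\eps_1}]$ is a compact subinterval of $[0,T_{\eps_2})$. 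This is a contradiction, so $T_{\eps_1}\ge T_{\eps_2}$, and the ordering holds on all of $[0,T_{\eps_2})$.

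The only delicate point is the pointwise inequality in Step 1. One needs $(a^p-b^p)_+\le p\max(a,b)^{p-1}(a-b)_+$ for $a,b\ge0$, and the $L^1$-Bochner integral must commute with positive parts in the sense $\bigl(\int g\bigr)_+\le\int g_+$. Both are elementary, so I expect no real obstacle.
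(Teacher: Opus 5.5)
Your argument is correct, but it takes a different route from the paper. The paper fixes only $\tau<T_{\eps_2}$ and runs the monotone iteration $v_0=0$, $v_{k+1}=\eps_1P_tu_0+\int_0^th\,P_{t-s}(v_k^p)\dd s$. By positivity, $0\le v_k\le v_{k+1}\le u_{\eps_2}$, because $u_{\eps_2}$ is a fixed point of the larger map $\Phi_{\eps_2}$. The limit $v=\sup_kv_k$ is then a bounded mild solution with parameter $\eps_1$ on $[0,\tau]$, by monotone convergence in \eqref{eq:mild} and $L^1$-continuity from Lemma~\ref{lem:duhamel-cont}. So $T_{\eps_1}\ge\tau$ comes out of the construction itself, and $v=u_{\eps_1}\le u_{\eps_2}$ by uniqueness.

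You instead prove the ordering only on the common interval, via a one-sided $L^1$ Gronwall estimate for $(u_{\eps_1}-u_{\eps_2})_+$. You then need the blow-up alternative, Lemma~\ref{lem:local}-(iii), to rule out $T_{\eps_1}<T_{\eps_2}$. Your route avoids having to identify a limit of iterates as a mild solution. The paper's route avoids Lemma~\ref{lem:local}-(iii), and it works verbatim with any bounded supersolution in place of $u_{\eps_2}$. That is how it is reused in Theorem~\ref{thm:apriori}-(c), with an ODE supersolution, and in Theorem~\ref{thm:sandwich}-(i).

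Incidentally, the iterates in your Step~1 already satisfy $w_2^{(k)}\le u_{\eps_2}$ by the same induction. So the short-interval contraction and the restarting are unnecessary; that observation is exactly the paper's proof.
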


\begin{proof}
Fix $\tau<T_{\eps_2}$. Define $v_0=0$ and
$v_{k+1}=\eps_1P_tu_0+\int_0^th\,P_{t-s}(v_k^p)\dd s$. By positivity of $P_t$,
$0\le v_k\le v_{k+1}\le u_{\eps_2}$. Each $v_k$ is in $C([0,\tau];L^1)$ by
Lemma~\ref{lem:duhamel-cont}, and $v=\sup_kv_k\le u_{\eps_2}$ solves
\eqref{eq:mild} with $\eps_1$ by monotone convergence, is bounded, and lies in
$C([0,\tau];L^1)$ again by Lemma~\ref{lem:duhamel-cont}. Hence
$T_{\eps_1}\ge\tau$, and $v=u_{\eps_1}$ by uniqueness.
\end{proof}

\subsection{Classical solutions: proof of Theorem~\ref{thm:regularity}}
\label{subsec:classical}

\begin{roadmap}
(1) We first prove derivative bounds (G) for the heat semigroup: on a Carnot
group they follow from scaling, and in general they are transferred to $\R^n$
through the global lifting. (2) Lemma~\ref{lem:holder-smoothing} shows that the
heat flow moves points only a distance $\tau^{1/2}$ and smooths H\"older
functions. (3) The proof of Theorem~\ref{thm:regularity} then proceeds in four
parts: (a) positivity, continuity and the distributional equation; (b)
smoothness for smooth $h$, by hypoellipticity; (c) the Schauder-type estimates,
in four steps (an a priori Lipschitz bound, regularity of $X_iX_ju$ in space,
regularity in time, and the time derivative); (d) the conclusion. (4)
Lemma~\ref{lem:classical-mild} shows that every classical solution is a mild
solution, which links this subsection to the lifespan theorems.
\end{roadmap}

Let
\[
 [g]_\theta=\sup_{x\ne y}|g(x)-g(y)|/\CC(x,y)^\theta .
\]
Besides the Gaussian
bounds we need derivative estimates for the heat semigroup. For ordered
products $X^I=X_{i_1}\cdots X_{i_k}$ of the fields:

\smallskip
\noindent\textbf{(G)} \ $P_\tau g\in C^\infty(\R^n)$ for $g\in L^\infty$, $\tau>0$, and for every $k\ge1$
\[
 \norm{X^IP_\tau g}_\infty\le C_k\,\tau^{-k/2}\norm g_\infty,
 \qquad
 \norm{X^IP_\tau g}_\infty\le C_k\,\tau^{-(k-\varkappa)/2}[g]_\varkappa\quad(0<\varkappa\le1),
\]
for all $|I|=k$, the second bound for $g\in L^\infty$ with $[g]_\varkappa<\infty$.

\smallskip

\begin{idea}
On a Carnot group the heat semigroup is a convolution with a kernel that obeys
the scaling $\gamma_\tau(z)=\tau^{-Q/2}\gamma_1(\delta_{\tau^{-1/2}}z)$, so every
derivative along the fields costs exactly a factor $\tau^{-1/2}$. For a general
$\Lcal$ we lift to a Carnot group, apply the Carnot case, and project back.
\end{idea}

On a Carnot group $P_\tau g=g*\gamma_\tau$, $\gamma_\tau(z)=\tau^{-Q/2}\gamma_1(\delta_{\tau^{-1/2}}z)$
\textup{(}$Q$ the homogeneous dimension of the group\textup{)}
with $\gamma_1$ Schwartz \cite{Folland1975,VaropoulosSaloffCosteCoulhon1992},
and $X^I(g*\gamma_\tau)=g*(X^I\gamma_\tau)$ by left invariance. Since
$X^I\gamma_\tau(z)=\tau^{-Q/2-k/2}(X^I\gamma_1)(\delta_{\tau^{-1/2}}z)$,
\begin{equation}\label{eq:kernel-scaling}
 \int|X^I\gamma_\tau|\dd z=\tau^{-k/2}\!\int|X^I\gamma_1|,
 \qquad
 \int|X^I\gamma_\tau(z)|\,|z|^\varkappa\dd z=\tau^{-(k-\varkappa)/2}\!\int|X^I\gamma_1(z)||z|^\varkappa\dd z,
\end{equation}
where $|z|=d(0,z)$ is the (left-invariant, homogeneous) Carnot--Carath\'eodory
norm; both integrals are finite. Moreover $\int X^I\gamma_\tau\dd z=0$ for
$k\ge1$, because $\gamma_\tau$ is Schwartz and the fields are divergence free.
Hence $X^I(g*\gamma_\tau)(w)=\int(g(v)-g(w))X^I\gamma_\tau(v^{-1}w)\dd v$, and
$|g(v)-g(w)|\le[g]_\varkappa|v^{-1}w|^\varkappa$ gives the second bound, the first
being the first integral in \eqref{eq:kernel-scaling}.

In general, let $G=(\R^{n+p'},\star)$ be the Carnot group of the global lifting,
with lifted fields $Z_j=X_j+R_j$, where $R_j$ differentiates only in the extra
variables, and $\pi(x,\xi)=x$. Then $Z_j(F\circ\pi)=(X_jF)\circ\pi$, and since
$\pi_*Z_j=X_j$, $\pi$ does not increase Carnot--Carath\'eodory distances. By
\cite[Thm.~1.4]{BiagiBonfiglioli2024} the heat kernel of $\Lcal$ is the
saturation $\Gamma_t(x,y)=\int\gamma^G_t((x,0)^{-1}\star(y,\eta))\dd\eta$ of the
heat kernel $\gamma^G_t$ of $\sum Z_j^2$, so
$P_tg(x)=\widetilde P_t(g\circ\pi)(x,0)$ with $\widetilde P_t$ the heat
semigroup of $G$. The function $\widetilde P_t(g\circ\pi)$ is independent of the
extra variables (by uniqueness of bounded solutions for continuous $g$, then
by a monotone-class argument), so
$(P_tg)\circ\pi=\widetilde P_t(g\circ\pi)$ is $C^\infty$ and
$(X^IP_tg)\circ\pi=Z^I\widetilde P_t(g\circ\pi)$. The Carnot case applied to
$\tilde g=g\circ\pi$, together with
$|\tilde g(v)-\tilde g(w)|\le[g]_\varkappa\CC(\pi v,\pi w)^\varkappa\le[g]_\varkappa|v^{-1}w|^\varkappa$
for $w=(x,0)$, gives (G) with $C_k$ depending only on $G$ and $k$. Only the
saturation formula and the lifting identities are taken from
\cite{BiagiBonfiglioli2024}; no pointwise derivative bound for $\Gamma$ is
needed. The estimates (G) are used only in Theorem~\ref{thm:regularity}-(c).

For $g\in C^2_X$ (two continuous $X$-derivatives) such that $g$, $X_jg$ and
$X_iX_jg$ are bounded, we also have
\begin{equation}\label{eq:P-Duhamel}
 P_\tau g-g=\int_0^\tau P_\rho(\Lcal g)\dd\rho .
\end{equation}
Indeed, $\widetilde P_\rho$ is self-adjoint on $G$, so
$\partial_\rho\widetilde P_\rho\tilde g=\widetilde P_\rho(\sum Z_j^2\tilde g)$
by testing against $\varphi\in C_c^\infty(G)$ and integrating by parts, and
$\widetilde P_\rho\tilde g\to\tilde g$ locally uniformly as $\rho\to0$;
restrict to $\xi=0$.

\medskip

\begin{lemma}\label{lem:holder-smoothing}
Let $\theta\in(0,1]$. There are $C_D,C_H,C_S$ such that
\begin{enumerate}[label=\textup{(\roman*)}]
\item $|\BX(x,2r)|\le C_D|\BX(x,r)|$ for all $x,r$;
\item $\int\Gamma_\tau(x,y)\CC(x,y)^\theta\dd y\le C_H\tau^{\theta/2}$, hence
$\norm{P_\tau g-g}_\infty\le C_H\tau^{\theta/2}[g]_\theta$;
\item $[P_\tau g]_\theta\le C_S\tau^{-\theta/2}\norm g_\infty$.
\end{enumerate}
\end{lemma}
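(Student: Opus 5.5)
For (i) I will use the global lifting to a Carnot group rather than the (non-group) geometry of $\R^n$ directly. On $G$ one has $|B_G(w,r)|=r^Q|B_G(0,1)|$ exactly, and the Biagi--Bonfiglioli results give the characterisation $\BX(x,r)=\pi(B_G((x,0),r))$. The volume should then be the $\eta$-integral of the fibre measure, which is awkward to write down. A cleaner route is to use the Gaussian bounds \eqref{eq:Gaussian-general} together with the semigroup identity. Conservation of mass gives
\[
1=\int\Gamma_t(x,y)\dd y\ge\int_{\BX(x,\sqrt t)}\Gamma_t(x,y)\dd y\ge \rho_G^{-1}e^{-\rho_G}.
\]
This is only a lower bound and does not yet give doubling, so I will instead compare the on-diagonal values
\[
\Gamma_{t}(x,x)\asymp|\BX(x,\sqrt t)|^{-1}.
\]
By Cauchy--Schwarz and the semigroup law,
\[
\Gamma_{4t}(x,x)=\int\Gamma_{2t}(x,z)^2\dd z\ge \int_{\BX(x,\sqrt t)}\Gamma_{2t}(x,z)^2\dd z .
\]
On $\BX(x,\sqrt t)$ the lower Gaussian bound gives $\Gamma_{2t}(x,z)\ge c/|\BX(x,\sqrt{2t})|$. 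Hence $|\BX(x,\sqrt t)|/|\BX(x,\sqrt{2t})|^2\lesssim 1/|\BX(x,2\sqrt t)|$, and since $|\BX(x,\sqrt{2t})|\le|\BX(x,2\sqrt t)|$ this yields
\[
|\BX(x,2\sqrt t)|\lesssim |\BX(x,\sqrt{2t})|^2/|\BX(x,\sqrt t)|.
\]
That inequality is not closed either. I therefore fall back on the standard fact that two-sided Gaussian bounds with volume normalisation imply doubling, proved as follows. Integrate the upper bound over the annulus $\BX(x,2r)\setminus\BX(x,r)$ with $t=r^2$, and the lower bound over $\BX(x,2r)$ with $t=4r^2$; use mass $\le1$ and the normalisation at the two times. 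This gives
\[
|\BX(x,2r)|\,e^{-\rho_G}\rho_G^{-1}/|\BX(x,2r)|\le 1,
\]
which is trivial, so the comparison has to run through a time change. Compare $\Gamma_{4r^2}(x,y)$ and $\Gamma_{r^2}(x,y)$ for $y$ near $x$ via the semigroup:
\[
\Gamma_{r^2}(x,x)=\int\Gamma_{r^2/2}(x,z)^2\dd z\le \sup_z\Gamma_{r^2/2}(x,z),
\]
together with the reverse bound. If the direct route stalls, I will simply cite doubling for homogeneous H\"ormander balls from \cite{BiagiBonfiglioliBramanti2022,BiagiBramanti2023}, where it is proved (the Nagel--Stein--Wainger polynomial volume formula $|\BX(x,r)|\asymp\sum_I|\lambda_I(x)|r^{d_I}$ with finitely many degrees $d_I\le q$ gives doubling with $C_D\le 2^q$). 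That is the plan for (i), and it is the step I expect to be the main obstacle.

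For (ii) I will split $\R^n$ into dyadic shells $\BX(x,2^{j+1}\sqrt\tau)\setminus\BX(x,2^j\sqrt\tau)$. On the $j$-th shell I use the upper Gaussian bound, and iterate (i) to get $|\BX(x,2^{j+1}\sqrt\tau)|\le C_D^{j+1}|\BX(x,\sqrt\tau)|$. The shell then contributes at most
\[
\rho_G C_D^{j+1}e^{-4^j/\rho_G}(2^{j+1}\sqrt\tau)^\theta,
\]
and the inner ball contributes at most $\tau^{\theta/2}$; the sum over $j$ is finite. The estimate $\norm{P_\tau g-g}_\infty\le\int\Gamma_\tau(x,y)|g(y)-g(x)|\dd y$ follows from mass conservation.

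For (iii), take $x,y$ with $d=\CC(x,y)$. If $d\ge\sqrt\tau$, then trivially $|P_\tau g(x)-P_\tau g(y)|\le 2\norm g_\infty\le2\tau^{-\theta/2}d^\theta\norm g_\infty$. If $d<\sqrt\tau$, I use (G) with $k=1$: $|X_jP_\tau g|\le C\tau^{-1/2}\norm g_\infty$. Integrating along a horizontal subunit curve of length about $d$ joining $x$ and $y$ gives
\[
|P_\tau g(x)-P_\tau g(y)|\le C d\tau^{-1/2}\norm g_\infty\le C d^\theta\tau^{-\theta/2}\norm g_\infty .
\]
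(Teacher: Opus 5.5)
Your arguments for (ii) and (iii) are the paper's. For (ii) you use dyadic annuli, the upper Gaussian bound and iterated doubling. For (iii) you use (G) with $k=1$ along a sub-unit curve, combined with the trivial bound $2\norm g_\infty$.

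The gap is in (i), where the proposal contains no proof: the direct attempts are abandoned and you fall back on a citation. The missing idea is elementary, and you nearly had it in your first display. Apply the lower bound in \eqref{eq:Gaussian-general} at time $t=r^2$, but integrate it over the \emph{larger} ball $\BX(x,2r)$ rather than over $\BX(x,r)$. On $\BX(x,2r)$ one has $\CC(x,y)^2/t<4$, hence
\[
 \Gamma_{r^2}(x,y)\ge\frac{e^{-4\rho_G}}{\rho_G\,|\BX(x,r)|}.
\]
Since $\Gamma_{r^2}(x,\cdot)$ has mass one, this gives $|\BX(x,2r)|\le\rho_Ge^{4\rho_G}|\BX(x,r)|$.

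In your first display, and again in the attempt with $t=4r^2$ on $\BX(x,2r)$, you integrate over the same ball that normalises the kernel. The volumes then cancel and only a trivial bound remains; the doubling constant appears precisely when the two radii differ. The paper's route keeps the lemma a consequence of \eqref{eq:Gaussian-general} alone, which is how it is used there (``This is all we use about $\Lcal$'').

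The fallback citation would be an acceptable substitute, since global doubling for homogeneous H\"ormander fields is available in the works you cite. However, the constant you attach to it is wrong. From $c\,\Phi(x,r)\le|\BX(x,r)|\le C\,\Phi(x,r)$, with $\Phi(x,r)=\sum_I|\lambda_I(x)|r^{d_I}$ and $d_I\le q$, you only get $C_D\le(C/c)\,2^q$, not $C_D\le 2^q$. Also, (i) is not the main obstacle: given the two-sided Gaussian bounds, it is the easiest of the three parts.
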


\begin{inwords}
(i) Balls of radius $2r$ have at most a fixed multiple of the volume of balls of
radius $r$. (ii) The heat flow moves mass a distance of order $\tau^{1/2}$, so
$P_\tau g$ is close to $g$ when $g$ is H\"older. (iii) Conversely, $P_\tau$
smooths: it turns a bounded function into a H\"older one, with a loss
$\tau^{-\theta/2}$.
\end{inwords}

\begin{proof}
(i) By \eqref{eq:Gaussian-general} with $t=r^2$, $\Gamma_t(x,y)\ge
e^{-4\rho_G}/(\rho_G|\BX(x,r)|)$ on $\BX(x,2r)$; since $\Gamma_t(x,\cdot)$ has
mass one, $C_D=\rho_Ge^{4\rho_G}$ works.
(ii) Split into $\{\CC<\sqrt\tau\}$ and dyadic annuli
$A_k=\{2^{k-1}\sqrt\tau\le\CC<2^k\sqrt\tau\}$. By the upper bound in
\eqref{eq:Gaussian-general} and (i), the integral over $A_k$ is at most
$\rho_G(2^k\sqrt\tau)^\theta C_D^ke^{-4^{k-1}/\rho_G}$, which is summable.
(iii) By Chow's theorem and (G), $|P_\tau g(x)-P_\tau g(y)|\le
\sqrt mC_1\tau^{-1/2}\CC(x,y)\norm g_\infty$ along a sub-unit curve; together with
the bound $2\norm g_\infty$ this gives $C\min\{1,\CC/\sqrt\tau\}\norm g_\infty
\le C(\CC/\sqrt\tau)^\theta\norm g_\infty$.
\end{proof}

\begin{proof}[Proof of Theorem~\ref{thm:regularity}]
(a) Positivity of $h,u$ and $P_t$ gives $u\ge\eps P_tu_0$, which is continuous
and positive for $t>0$ by \eqref{eq:Gaussian-general}. Write
$u-\eps P_tu_0=\int_0^th(s)\int\Gamma_{t-s}(x,y)u(s,y)^p\dd y\dd s$ with
$0\le u^p\le B^p$. The part of the $s$-integral over $[t-\delta,t]$ is at most
$B^p\int_{t-\delta}^th$, uniformly small, and on $[0,t-\delta]$ the integrand is
continuous in $(t,x)$ by Scheff\'e's lemma and dominated by $B^ph(s)$.

\emph{Distributional equation.} For $\psi\in C_c^\infty$,
$\partial_rP_r\psi=P_r\Lcal\psi$, and $P_r\psi\to\psi$ locally uniformly, so
$P_t\psi=\psi+\int_0^tP_r\Lcal\psi\dd r$. Pair \eqref{eq:mild} with $\Lcal\psi$,
integrate over $\tau\in[0,t]$ and use Fubini (justified by $h\in L^1_{\rm loc}$
and $|\langle u^p,P_r\Lcal\psi\rangle|
\le\norm u_\infty^{p-1}\norm u_1\norm{\Lcal\psi}_\infty$) to get
\begin{equation}\label{eq:weak-form}
 \langle u(t),\psi\rangle=\eps\langle u_0,\psi\rangle
 +\int_0^t\bigl[\langle u,\Lcal\psi\rangle+h\langle u^p,\psi\rangle\bigr]\dd\tau ;
\end{equation}
the equation $u_t-\Lcal u=hu^p$ holds in the sense of distributions on
$(0,T_\eps)\times\R^n$, by testing with $\chi(t)\psi(x)$, $\chi\in C_c^\infty(0,T_\eps)$.

(b) $P=\partial_t-\Lcal$ is hypoelliptic with the subelliptic gain
$H^s_{\rm loc}\to H^{s+\sigma_0}_{\rm loc}$ \cite{Hormander1967,
RothschildStein1976}. On compacts of $I\times\R^n$, $c\le u\le M$ by (a), so
$u^p=G(u)$ with $G\in C^\infty$. Starting from $u\in L^\infty_{\rm loc}$,
Moser's composition estimate \cite{Taylor2011} gives
$Pu=hG(u)\in H^s_{\rm loc}\Rightarrow u\in H^{s+\sigma_0}_{\rm loc}$, and
induction plus Sobolev embedding yield $u\in C^\infty$.

\smallskip\noindent\emph{Idea of (c).} The Duhamel integral is split into three
parts: a far-past part, which is smooth because the semigroup has had time at
least $\delta$ to smooth it; a near-past part in which $F(s)$ is compared with
$F(t)$ (this is where the H\"older continuity of $h$ in time is used); and a
``frozen'' part with $F(t)$ in place of $F(s)$ (this is where the H\"older
continuity in space is used). Each part is bounded with the derivative
estimates (G), and the singular factors $(t-s)^{-1+\varkappa/2}$ are integrable
because $\varkappa>0$.

(c) We may take $\varkappa<1$ with $\varkappa\le2\gamma$. Choose compact
intervals $J\subset J_0\subset J_1\subset I$ and $\delta>0$ with
$[t-\delta,t]\subset J_0$ for $t\in J$, and put $t_J=\min J>0$,
$B=\sup_{[0,\max J_1]}\norm u_\infty$, $F:=h\,u^p$.

\emph{Step 0: an a priori Lipschitz bound.} We first show that $u$ is Lipschitz
in $x$ and $\frac12$-H\"older in $t$, which gives the bounds \eqref{eq:F-hold}
on $F=hu^p$ needed below. Let $\delta_0>0$ be so small that
$J_2:=\{t\in\R:\operatorname{dist}(t,J_1)\le\delta_0\}\subset I$, and let
$M_h=\sup_{J_2}h<\infty$. By Lemma~\ref{lem:holder-smoothing}-(iii) and
\eqref{eq:mild}, for $t\in J_1$,
\[
 [u(t)]_1\le C_S\eps\,(\min J_1)^{-1/2}\norm{u_0}_\infty
 +C_SB^p\int_0^th(s)(t-s)^{-1/2}\dd s .
\]
Over $[0,t-\delta_0]$ the integral is at most
$\delta_0^{-1/2}\int_0^{\max J_1}h$, and over $[t-\delta_0,t]$ it is at most
$2M_h\delta_0^{1/2}$. So $K:=\sup_{J_1}[u(t)]_1<\infty$. For $t<t'$ in $J_1$
and $\tau=t'-t$ the semigroup law gives
$u(t')=P_\tau u(t)+\int_t^{t'}hP_{t'-s}u^p\dd s$, so by
Lemma~\ref{lem:holder-smoothing}-(ii)
\[
 |u(t',x)-u(t,x)|\le C_HK\tau^{1/2}+B^pM_h\tau\le C\tau^{1/2}.
\]
Thus $u$ is Lipschitz in $x$ with respect to $\CC$ and $\frac12$-H\"older in $t$
on $J_1\times\R^n$. Since $s\mapsto e^{sX_k}x$ is sub-unit, $|X_ku|\le K$
wherever $X_ku$ exists.

Since $|h(t)-h(s)|\le[h]_\gamma|t-s|^\gamma$ and
$\varkappa/2\le\min\{\gamma,\frac12\}$, there is $M$ with, for $t,s\in J_0$,
\begin{equation}\label{eq:F-hold}
 \norm{F(t)}_\infty\le M,\qquad [F(t)]_\varkappa\le M,\qquad
 \norm{F(t)-F(s)}_\infty\le M|t-s|^{\varkappa/2}
\end{equation}
(for $\CC(x,y)>1$ the Lipschitz bound is replaced by $2M\le2M\CC^\varkappa$).
Write $D=X_iX_j$ for a fixed ordered pair. We use freely that derivatives along the
fields $X_k$ may be taken under the integral sign when the differentiated
integrand is continuous and dominated by an integrable function of the
integration variable (integrate along the integral curves of $X_k$ and use
dominated convergence). We also write $|\psi(x)-\psi(y)|\le\sqrt m\,
\max_k\norm{X_k\psi}_\infty\CC(x,y)$ (Chow's theorem), used with $\psi=X^IP_\sigma g$.

\emph{Step 1: H\"older regularity of $X_iX_ju$ in space.} We follow a classical approach, well known in $\R^n$ (see, for example,
\cite{QuittnerSouplet2019}).

Fix $t\in J$. By
\eqref{eq:mild},
$u(t)=\eps P_tu_0+w_1+w_2+w_3$ with
\[
 w_1=\int_0^{t-\delta}\!\!hP_{t-s}u^p\dd s,\quad
 w_2=\int_{t-\delta}^t\!\!P_{t-s}\bigl(F(s)-F(t)\bigr)\dd s,\quad
 w_3=\int_0^\delta P_\sigma F(t)\dd\sigma .
\]
By (G), $\norm{DP_tu_0}_\infty\le C_2t_J^{-1}\norm{u_0}_\infty$ and
$\norm{X_kDP_tu_0}_\infty\le C_3t_J^{-3/2}\norm{u_0}_\infty$. For $w_1$,
$\norm{DP_{t-s}u(s)^p}_\infty\le C_2\delta^{-1}B^p$ and
$\norm{X_kDP_{t-s}u(s)^p}_\infty\le C_3\delta^{-3/2}B^p$, integrated against
$h\in L^1(0,\max J)$. For $w_2$, by \eqref{eq:F-hold} and the first bound in (G),
$\norm{DP_{t-s}(F(s)-F(t))}_\infty\le C_2M(t-s)^{-1+\varkappa/2}$, and the same with
$X_kD$, $C_3$ and $(t-s)^{-3/2+\varkappa/2}$. For $w_3$, the second bound in (G) with
$[F(t)]_\varkappa\le M$ gives $\norm{DP_\sigma F(t)}_\infty\le C_2M\sigma^{-1+\varkappa/2}$
and $\norm{X_kDP_\sigma F(t)}_\infty\le C_3M\sigma^{-3/2+\varkappa/2}$. The
dominating functions for $D$, of order $\sigma^{-1+\varkappa/2}$, are integrable
at the singular endpoint because $\varkappa>0$, so $Du(t)$ exists, is continuous
and bounded. The bounds for $X_kD$, of order $\sigma^{-3/2+\varkappa/2}$, are not
integrable at $\sigma=0$; they are used below only for $\sigma>d^2$. For $x,y$ with $d=\CC(x,y)<1$, split each of
$w_2,w_3$ at $\sigma=d^2$: using the $C_2$-bound for $\sigma\le d^2$ and the
$C_3$-bound for $\sigma>d^2$,
\[
 |Dw_{2,3}(x)-Dw_{2,3}(y)|\le CM\Bigl(\int_0^{d^2}\!\sigma^{-1+\frac\varkappa2}\dd\sigma
 +d\int_{d^2}^\infty\!\sigma^{-\frac32+\frac\varkappa2}\dd\sigma\Bigr)
 \le CM\Bigl(\tfrac2\varkappa+\tfrac2{1-\varkappa}\Bigr)d^\varkappa .
\]
The terms $P_tu_0$ and $w_1$ are Lipschitz, hence $\varkappa$-H\"older for $d<1$, and
bounded for $d\ge1$. Thus $\sup_{t\in J}[X_iX_ju(t)]_\varkappa<\infty$. The same
bounds with $|I|=1$ in (G) show that $X_ku(t)$ exists; it is bounded by $K$.

\emph{Step 2: H\"older regularity of $X_iX_ju$ in time.} Let $t<t'$ in $J$,
$\tau=t'-t$, $g=u(t)$. By \eqref{eq:mild}
and the semigroup law, $u(t')=P_\tau g+W$ with $W=\int_t^{t'}P_{t'-s}F(s)\dd s$.
By Steps 0 and 1, $g\in C^2_X$ with $g$, $X_kg$, $X_iX_kg$ bounded, and
$[\Lcal g]_\varkappa\le C$ for $\Lcal g=\sum_kX_k^2g$. By \eqref{eq:P-Duhamel} and (G),
\[
 \norm{D(P_\tau g-g)}_\infty\le\int_0^\tau\norm{DP_\rho\Lcal g}_\infty\dd\rho
 \le C_2[\Lcal g]_\varkappa\int_0^\tau\rho^{-1+\frac\varkappa2}\dd\rho\le C\tau^{\varkappa/2}.
\]
For $W$ write $F(s)=F(t')+(F(s)-F(t'))$ and use (G) and \eqref{eq:F-hold}:
$\norm{DP_{t'-s}F(t')}_\infty\le C_2M(t'-s)^{-1+\varkappa/2}$ and
$\norm{DP_{t'-s}(F(s)-F(t'))}_\infty\le C_2M(t'-s)^{-1+\varkappa/2}$, so
$\norm{DW}_\infty\le C\tau^{\varkappa/2}$. Hence
$|Du(t',x)-Du(t,x)|\le C\tau^{\varkappa/2}$. With Step 1 this gives
\eqref{eq:schauder} for $D=X_iX_j$, and in particular $Du$ is continuous. The
same argument with $D$ replaced by $X_k$ and the first bound in (G) with $|I|=1$
gives $\norm{X_k(P_\tau g-g)}_\infty\le C_1\norm{\Lcal g}_\infty
\int_0^\tau\rho^{-1/2}\dd\rho$ and $\norm{X_kW}_\infty\le C\tau^{1/2}$, hence
$|X_ku(t',x)-X_ku(t,x)|\le C\tau^{1/2}$; with the Lipschitz bound in $x$,
$X_ku$ is continuous on $J\times\R^n$.

\emph{Step 3: the time derivative.} By \eqref{eq:weak-form} and $u\in C^2_X$
with $X_k$ divergence free,
$\frac{\dd}{\dd t}\langle u(t),\psi\rangle=\langle\Lcal u+F,\psi\rangle$ with a
continuous right side, for every $\psi\in C_c^\infty$. Hence
$u(t',x)-u(t,x)=\int_t^{t'}(\Lcal u+F)(s,x)\dd s$ for all $x$, so $u_t=\Lcal u+F$
exists and is continuous. It is the sum of the H\"older functions $X_k^2u$ and
$F$, which gives \eqref{eq:schauder} for $D=u_t$, using \eqref{eq:F-hold} and
Step 2.

(d) Apply (c) with $I=(0,T_\eps)$ and exhaust $(0,T_\eps)$ by compact
intervals: $X_ku$, $X_iX_ju$ and $u_t$ exist and are continuous on
$(0,T_\eps)\times\R^n$, bounded on $[t_0,t_1]\times\R^n$ for
$0<t_0<t_1<T_\eps$, and $u_t-\Lcal u=hu^p$ pointwise. The remaining requirements
\textup{(}$u\ge0$, $u\in C([0,T_\eps);L^1)$, $u(0)=\eps u_0$, boundedness on
$[0,t]\times\R^n$\textup{)} are part of the definition of a mild solution, so
$u$ is classical. The converse statement is
Lemma~\ref{lem:classical-mild}, and the blow-up alternative is
Lemma~\ref{lem:local}-(iii).
\end{proof}

\begin{lemma}[Classical solutions are mild]\label{lem:classical-mild}
Assume \eqref{eq:h-ass} and $h\in C((0,\infty))$. Every classical solution on
$[0,T)$ is a mild solution on $[0,T)$. Consequently $T\le T_\eps$, and it
coincides with the mild solution $u$ on $[0,T)$.
\end{lemma}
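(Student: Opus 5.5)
To prove Lemma~\ref{lem:classical-mild}, we fix a classical solution $u$ on $[0,T)$ and a time $t\in(0,T)$, and derive the Duhamel formula \eqref{eq:mild} by differentiating along the backward semigroup. For $0<s<t$ set $\Phi(s):=P_{t-s}u(s)$. Formally,
\[
\Phi'(s)=-P_{t-s}\Lcal u(s)+P_{t-s}u_t(s)=P_{t-s}\bigl(h(s)u(s)^p\bigr).
\]
Integrating over $[s_0,s_1]\subset(0,t)$ and then letting $s_0\downarrow0$ and $s_1\uparrow t$ yields \eqref{eq:mild}.

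\textbf{Justifying the derivative.} We do this pointwise in $x$, using the kernel. We write
\[
\Phi(s)(x)=\int\Gamma_{t-s}(x,y)u(s,y)\dd y .
\]
For $s\in[s_0,s_1]$ we have $t-s\ge t-s_1>0$, and $u$, $u_t$, $X_iX_ju$ are bounded on $[s_0,s_1]\times\R^n$. Also $\Gamma_{t-s}(x,\cdot)$ is integrable, and $\partial_s\Gamma_{t-s}=-\Lcal_y\Gamma_{t-s}$ is smooth. The genuine difficulty is that $u$ is only bounded, not decaying, so we cannot simply integrate $\Lcal$ by parts on $\R^n$.

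\textbf{Integration by parts with a cutoff.} We insert cutoffs $\chi_R(y)=\chi(\delta_{1/R}y)$, which satisfy $|X_k\chi_R|\le C/R$ and $|\Lcal\chi_R|\le C/R^2$ by homogeneity (H1). We compute
\[
\frac{\dd}{\dd s}\int\Gamma_{t-s}\chi_Ru(s)\dd y ,
\]
integrate by parts (the $X_k$ are divergence free), and collect error terms of the form $\int(X_k\Gamma)(X_k\chi_R)u$ and $\int\Gamma(\Lcal\chi_R)u$. These errors tend to $0$ as $R\to\infty$ once $\int|X_k\Gamma_{t-s}(x,y)|\dd y<\infty$ uniformly for $s\in[s_0,s_1]$. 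That bound follows from (G) by duality, or more simply by moving every derivative onto $u$ and $\chi_R$ (alternatively, writing $\int\Gamma\,\Lcal(\chi_Ru)$). Moving everything onto $u$ needs only the boundedness of $X_ku$ and $X_iX_ju$ together with the integrability of $\Gamma$. So we choose the variant with all derivatives on $\chi_Ru$, namely $\Lcal(\chi_Ru)=\chi_R\Lcal u+2\sum_k X_k\chi_R\,X_ku+u\Lcal\chi_R$, and use $\partial_s\Gamma_{t-s}(x,\cdot)=-\Lcal\Gamma_{t-s}(x,\cdot)$ paired symmetrically. Passing to the limit $R\to\infty$ by dominated convergence gives
\[
\Phi(s_1)(x)-\Phi(s_0)(x)=\int_{s_0}^{s_1}P_{t-s}\bigl(hu^p\bigr)(s)(x)\dd s .
\]

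\textbf{Limits at the endpoints.} As $s_0\downarrow0$, we have $u(s_0)\to\eps u_0$ in $L^1$ and $P_{t-s_0}$ is an $L^1$ contraction that is strongly continuous, so $\Phi(s_0)\to\eps P_tu_0$ in $L^1$. As $s_1\uparrow t$, $u(s_1)\to u(t)$ in $L^1$ and $P_{t-s_1}$ is strongly continuous on $L^1$, so $\Phi(s_1)\to u(t)$. The right-hand side converges because $h\in L^1_{\rm loc}$ and $\norm{u^p}_1\le\norm u_\infty^{p-1}\norm u_1$ is bounded on $[0,t]$. Hence $u$ satisfies \eqref{eq:mild}.

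\textbf{Conclusion.} Continuity in $L^1$, nonnegativity and local $L^\infty$ bounds are part of the definition of a classical solution, so $u$ is a mild solution on $[0,T)$. Uniqueness, Lemma~\ref{lem:local}-(i), then gives that $u$ coincides with the mild solution on $[0,T)$, and $T\le T_\eps$ by the definition of $T_\eps$ as a supremum.

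The step I expect to be the main obstacle is the integration by parts on all of $\R^n$ against a merely bounded $u$. The cutoff argument above, with the homogeneous cutoffs controlled through (H1), is how I would handle it.
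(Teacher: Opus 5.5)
Your proof is correct, and it takes a different route from the paper at the key step, differentiating $s\mapsto[P_{t-s}u(s)](x)$.

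\emph{The paper's route.} The paper also works pointwise with $\phi(s)=[P_{t-s}w(s)](x)$, but it splits the difference quotient into two pieces:
\begin{itemize}
\item $P_{t-s-\delta}[w(s+\delta)-w(s)](x)$, handled by Scheff\'e's lemma;
\item $[P_{t-s-\delta}w(s)-P_{t-s}w(s)](x)$, handled by the identity \eqref{eq:P-Duhamel}, $P_\tau g-g=\int_0^\tau P_\rho(\Lcal g)\dd\rho$ for bounded $C^2_X$ functions $g$.
\end{itemize}
That identity is proved by lifting to the Carnot group of the global lifting and using self-adjointness of the lifted semigroup there.

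\emph{Your route.} You integrate by parts directly in the $y$-variable of the kernel, against the compactly supported function $\chi_Ru(s)$. This uses only that $\Gamma$ is smooth, symmetric and solves the heat equation (hence in $y$ as well), that the $X_k$ are divergence free, and that the cutoffs satisfy $X_k\chi_R=R^{-1}(X_k\chi)\circ\delta_{1/R}$ by (H1). Choosing dilation-adapted cutoffs is the right call: Euclidean cutoffs $\chi(y/R)$ would not give small errors, because the polynomial coefficients of $X_k$ grow on their support.

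\emph{What each buys.} Your argument is more self-contained for this lemma, since it needs no lifting. The paper's route costs nothing extra there, because \eqref{eq:P-Duhamel} is needed anyway in Step~2 of the proof of Theorem~\ref{thm:regularity}.

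\emph{Endpoints.} You pass to the limits $s_0\downarrow0$ and $s_1\uparrow t$ in $L^1$, using strong continuity of $P_\tau$ and $u\in C([0,T);L^1)$. This yields \eqref{eq:mild} only for a.e.\ $x$, which is all Definition~\ref{def:mild} requires. The paper takes pointwise limits, using Lemma~\ref{lem:holder-smoothing}-(ii) and the Lipschitz bound from Chow's theorem, and gets the identity at every $x$.

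\emph{Two small points.}
\begin{itemize}
\item You should still record that $s\mapsto h(s)P_{t-s}(u(s)^p)$ is strongly measurable into $L^1$. It is continuous in $L^1$ on $[0,t]$ up to the factor $h$, because $u$ is bounded and $L^1$-continuous and $\xi\mapsto\xi^p$ is Lipschitz on bounded sets. Hence your pointwise $s$-integral agrees a.e.\ with the $L^1$-valued integral in Definition~\ref{def:mild}.
\item Your aside that $\int|X_k\Gamma_{t-s}(x,y)|\dd y$ is bounded ``by (G) by duality'' is not accurate as stated. (G) controls derivatives in the first variable integrated over the second, not derivatives in $y$ integrated over $y$. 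Since you discard that variant, this does no harm.
\end{itemize}
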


\begin{idea}
Fix $t$ and $x$ and differentiate $s\mapsto[P_{t-s}w(s)](x)$. The heat equation
for $w$ cancels the derivative of the semigroup and leaves
$h(s)P_{t-s}(w(s)^p)(x)$; integrating from $0$ to $t$ gives the Duhamel
formula.
\end{idea}

\begin{proof}
Let $w$ be a classical solution on $[0,T)$, fix $0<t<T$ and $x\in\R^n$, and put
$\phi(s)=[P_{t-s}w(s)](x)$ for $0<s<t$. Fix $0<s_0<t<t_1<T$ and let $B$ bound
$|w|$, $|X_kw|$, $|X_iX_jw|$ and $|w_t|$ on $[s_0,t_1]\times\R^n$. For
$s\in(s_0,t)$,
\[
 \phi(s+\delta)-\phi(s)=P_{t-s-\delta}\bigl[w(s+\delta)-w(s)\bigr](x)
 +\bigl[P_{t-s-\delta}w(s)-P_{t-s}w(s)\bigr](x).
\]
Since $\delta^{-1}[w(s+\delta)-w(s)]\to w_t(s)$ pointwise with bound $B$, and
$\Gamma_\tau(x,\cdot)\to\Gamma_{t-s}(x,\cdot)$ in $L^1$ as $\tau\to t-s$
\textup{(}Scheff\'e\textup{)}, the first term divided by $\delta$ tends to
$P_{t-s}w_t(s)(x)$. For $g=w(s)$, which lies in $C^2_X$ with $g$, $X_jg$,
$X_iX_jg$ bounded, \eqref{eq:P-Duhamel} shows that $\tau\mapsto P_\tau g(x)$ is
$C^1$ with derivative $P_\tau(\Lcal g)(x)$, so the second term divided by
$\delta$ tends to $-P_{t-s}(\Lcal w(s))(x)$. Hence $\phi$ is differentiable on
$(0,t)$ with
$\phi'(s)=P_{t-s}(w_t-\Lcal w)(s)(x)=h(s)P_{t-s}\bigl(w(s)^p\bigr)(x)$, which is
continuous \textup{(}Scheff\'e's lemma and dominated convergence\textup{)}, and for $s_0<s_1<t$
\[
 \phi(s_1)-\phi(s_0)=\int_{s_0}^{s_1}h(s)P_{t-s}\bigl(w(s)^p\bigr)(x)\dd s .
\]
As $s_1\uparrow t$, $\phi(s_1)\to w(t,x)$: indeed
$\sup_y|w(s_1,y)-w(t,y)|\le B(t-s_1)$, and
$|P_\tau w(t)(x)-w(t,x)|\le C_H\tau^{1/2}[w(t)]_1\to0$ by
Lemma~\ref{lem:holder-smoothing}-(ii), because $w(t)$ is Lipschitz by Chow's
theorem. As $s_0\downarrow0$, $\phi(s_0)\to\eps P_tu_0(x)$: here
$w(s_0)\to\eps u_0$ in $L^1$, $\sup_y\Gamma_\tau(x,y)\le C\tau^{-q/2}$ for
$\tau\in[t/2,t]$ by \eqref{eq:Gaussian-general} and the volume bound, and
$\Gamma_{t-s_0}(x,\cdot)\to\Gamma_t(x,\cdot)$ in $L^1$ while $u_0\in L^\infty$. The
integrand is nonnegative, so by monotone convergence
\[
 w(t,x)=\eps P_tu_0(x)+\int_0^th(s)P_{t-s}\bigl(w(s)^p\bigr)(x)\dd s
\]
for every $x$. Finally, $s\mapsto w(s)^p\in L^1$ is continuous, because
$|a^p-b^p|\le pB'^{p-1}|a-b|$ for $0\le a,b\le B'$ with $B'$ a bound of $w$ on
$[0,t]\times\R^n$, and
$\int_0^th\norm{P_{t-s}w(s)^p}_1\dd s\le B'^{p-1}\int_0^th\norm{w(s)}_1\dd s<\infty$.
So the integral is an $L^1$-valued Bochner integral whose value agrees almost
everywhere with the pointwise integral \textup{(}Fubini\textup{)}, and $w$ is a
mild solution. By the definition of $T_\eps$, $T\le T_\eps$, and $w=u$ on
$[0,T)$ by Lemma~\ref{lem:local}-(i).
\end{proof}

\subsection{Elementary asymptotics}\label{subsec:asympt}

The following lemma is used in Section~\ref{sec:applications} to compute
$\Hp$ for explicit coefficients and to invert the law \eqref{eq:master}.

\begin{lemma}\label{lem:asympt}
\begin{enumerate}[label=\textup{(\roman*)}]
\item Let $\rho>0$, $\gamma\in\R$, $T_0\ge e$. Then
$\int_{T_0}^Tt^{\rho-1}(\log t)^\gamma\dd t\sim\rho^{-1}T^\rho(\log T)^\gamma$.
If $\rho<0$ the integral converges as $T\to\infty$.
\item Let $\rho>0$, $\gamma\in\R$ and $G(T)=T^\rho(\log T)^\gamma$. Let
$\Sigma>e$ be a large parameter \textup{(}in applications $\Sigma=\eps^{-(p-1)}$\textup{)}
and let $T=T(\Sigma)\to\infty$ as $\Sigma\to\infty$ satisfy
$c_1\Sigma\le G(T)\le C_1\Sigma$ for some constants $c_1,C_1>0$. Then
\[
 T\asymp\Sigma^{1/\rho}(\log\Sigma)^{-\gamma/\rho}\qquad(\Sigma\to\infty).
\]
\item For $\theta>0$, $\nu\in\R$:
$\int_{T_0}^Te^{\theta t}t^\nu\dd t\sim\theta^{-1}e^{\theta T}T^\nu$.
\end{enumerate}
\end{lemma}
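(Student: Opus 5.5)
The three parts are elementary real-variable facts, and I would prove them by l'H\^opital's rule (or direct substitution) plus a monotone-inversion argument.

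For (i), with $\rho>0$, both $\int_{T_0}^Tt^{\rho-1}(\log t)^\gamma\dd t$ and $\rho^{-1}T^\rho(\log T)^\gamma$ tend to $\infty$. L'H\^opital's rule then reduces the claim to the ratio of derivatives:
\[
 \frac{T^{\rho-1}(\log T)^\gamma}{T^{\rho-1}(\log T)^\gamma+\rho^{-1}\gamma T^{\rho-1}(\log T)^{\gamma-1}}
 =\frac1{1+\gamma/(\rho\log T)}\to1 .
\]
For $\rho<0$, I would fix $\eta\in(0,-\rho)$ and use $(\log t)^\gamma\le C_\eta t^{\eta}$ for $t\ge T_0$. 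This bounds the integrand by $C_\eta t^{\rho+\eta-1}$, which is integrable at infinity. Part (iii) is identical: the ratio of derivatives is
\[
 \frac{e^{\theta T}T^\nu}{e^{\theta T}T^\nu+\theta^{-1}\nu e^{\theta T}T^{\nu-1}}\to1 .
\]

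For (ii), the idea is to compare $T$ with the explicit candidate $S:=\Sigma^{1/\rho}(\log\Sigma)^{-\gamma/\rho}$. First take logarithms of $c_1\Sigma\le G(T)\le C_1\Sigma$:
\[
 \rho\log T+\gamma\log\log T=\log\Sigma+O(1).
\]
Since $T\to\infty$, the term $\gamma\log\log T$ is $o(\log T)$, so $\log T\sim\rho^{-1}\log\Sigma$, and hence $\log\log T=\log\log\Sigma+O(1)$. Next, raise $G(T)=T^\rho(\log T)^\gamma\asymp\Sigma$ to the power $1/\rho$. This gives $T\asymp\Sigma^{1/\rho}(\log T)^{-\gamma/\rho}$. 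Finally, the estimate $\log T\asymp\log\Sigma$ lets us replace $(\log T)^{-\gamma/\rho}$ by $(\log\Sigma)^{-\gamma/\rho}$ up to constants, for either sign of $\gamma$. That yields $T\asymp S$.

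I expect the only point needing care to be this last step of (ii): the bootstrap from $\log T\sim\rho^{-1}\log\Sigma$ to the two-sided bound, where one has to keep track of the sign of $\gamma$. The implied constants depend only on $\rho,\gamma,c_1,C_1$, and $\Sigma$ must be large enough that $\log T\ge1$.
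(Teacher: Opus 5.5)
Your proof is correct. For (i) and (iii) it is the paper's argument (l'H\^opital applied to the ratio of derivatives). You also supply the short domination argument for the case $\rho<0$, which the paper leaves implicit.

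For (ii) you take a genuinely different route. The paper never solves the hypothesis for $T$. It observes that $G(KT)/G(T)\ge\phi(K):=K^\rho(1+\log K)^{-|\gamma|}$ for $K\ge1$, $T\ge e$, and checks that $G(S)\asymp\Sigma$ for $S=\Sigma^{1/\rho}(\log\Sigma)^{-\gamma/\rho}$. It then uses the eventual monotonicity of $G$ to argue by contradiction: once $\phi(K)$ exceeds the ratios of the constants, neither $T\ge KS$ nor $S\ge KT$ is possible, so $S/K<T<KS$.

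You instead bootstrap directly. From the logarithmic form you get $\log T\sim\rho^{-1}\log\Sigma$, then $T\asymp\Sigma^{1/\rho}(\log T)^{-\gamma/\rho}$, and finally you replace $\log T$ by $\log\Sigma$. Since $(\log T/\log\Sigma)^{-\gamma/\rho}\to\rho^{\gamma/\rho}\in(0,\infty)$, the sign of $\gamma$ in fact causes no difficulty at all.

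What each buys:
\begin{itemize}
\item Your version is shorter and does not use monotonicity of $G$. It also yields explicit limiting constants: $T/S$ eventually lies in $[c_1^{1/\rho},C_1^{1/\rho}]\,\rho^{\gamma/\rho}(1+o(1))$.
\item The paper's version is an abstract inversion principle. It uses only $G(S)\asymp\Sigma$, the dilation growth $\phi(K)\to\infty$ and eventual monotonicity, so it carries over unchanged to other eventually increasing $G$ with that property. The price is less explicit constants.
\end{itemize}
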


\begin{inwords}
Parts (i) and (iii) say that an integral of a power times a logarithm, or of an
exponential times a power, is asymptotic to its leading term. Part (ii) says
that such a relation can be inverted: if $G(T)\asymp\Sigma$ then $T$ is of the
order of the inverse function of $G$ evaluated at $\Sigma$.
\end{inwords}

\begin{proof}
(i) and (iii) follow from l'H\^opital's rule, since
$\frac{\dd}{\dd T}[T^\rho(\log T)^\gamma]=T^{\rho-1}(\log T)^\gamma(\rho+\gamma/\log T)$
and $\frac{\dd}{\dd T}[\theta^{-1}e^{\theta T}T^\nu]=e^{\theta T}T^\nu
(1+\nu/(\theta T))$. (ii) For $K\ge1$ and $T\ge e$,
$G(KT)/G(T)\ge\phi(K):=K^\rho(1+\log K)^{-|\gamma|}\to\infty$ as $K\to\infty$.
Put $S=\Sigma^{1/\rho}(\log\Sigma)^{-\gamma/\rho}$; then $c_2\Sigma\le G(S)\le C_2\Sigma$
for large $\Sigma$, since $\log S/\log\Sigma\to1/\rho$. Fix $K$ with
$\phi(K)>\max\{C_1/c_2,C_2/c_1\}$. If $T\ge KS$, then $G(T)\ge G(KS)\ge\phi(K)G(S)>C_1\Sigma$,
and if $S\ge KT$, then $G(S)\ge G(KT)\ge\phi(K)G(T)>C_2\Sigma$, for large $\Sigma$
\textup{(}using that $G$ is increasing for large arguments\textup{)}; both contradict
the assumptions. So $S/K<T<KS$.
\end{proof}

\section{Proofs of the main results}\label{sec:proofs}

This section contains the proofs of Theorems~\ref{thm:master},
\ref{thm:threshold} and~\ref{thm:apriori} and of Corollary~\ref{cor:small}.
Part (a) of Theorem~\ref{thm:master} (the lower bound for the lifespan) is a
fixed-point argument; part (b) (the upper bound) is the main new argument of the
paper.

\subsection{Proof of Theorem~\ref{thm:master}-(a)}\label{subsec:lower}

\begin{roadmap}
We show that the Duhamel map is a contraction on a ball of a Banach space whose
norm is adapted to the decay of the linear flow. The steps are: the linear part
(Step 1); the $L^1$ bound of the nonlinear term (Step 2); the weighted $L^\infty$
bound, where the time integral is split at $t/2$ (Step 3); and the contraction
argument (Step 4). The coefficient $h$ enters only through the single number
$\Hp(T)$.
\end{roadmap}

\smallskip\noindent\textbf{Setting.}
Let $\mathcal X_T$ be the Banach space of $u\in C([0,T);L^1)$ with
\[
 \norm u_{\mathcal X_T}:=\sup_{0\le t<T}\bigl\{\norm{u(t)}_1+(1+t)^{q/2}\norm{u(t)}_\infty\bigr\}<\infty,
\]
and let $\Phi u(t)=\eps P_tu_0+\mathcal N(u)(t)$, where
$\mathcal N(u)(t)=\int_0^th(s)P_{t-s}(u(s)^p)\dd s$. Put
$\mathcal U_0=\norm{u_0}_1+\norm{u_0}_\infty$.

\smallskip\noindent\textbf{Step 1: the linear part.} For $t\le1$,
contractivity gives $\norm{P_tu_0}_1\le\norm{u_0}_1$ and
$(1+t)^{q/2}\norm{P_tu_0}_\infty\le2^{q/2}\norm{u_0}_\infty$. For $t\ge1$,
\eqref{eq:L1Linfty} gives $\norm{P_tu_0}_\infty\le Ct^{-q/2}\norm{u_0}_1\le
C'(1+t)^{-q/2}\norm{u_0}_1$. Hence $\norm{P_\cdot u_0}_{\mathcal X_T}\le C_{\rm L}\mathcal U_0$.

\smallskip\noindent\textbf{Step 2: the $L^1$ norm of the nonlinear term.} For
$u\in\mathcal X_T$ we have $\norm{u^p}_1\le\norm u_\infty^{p-1}\norm u_1$, so
\begin{equation}\label{eq:up-bounds}
 \norm{u(s)^p}_1\le(1+s)^{-\alpha}\norm u_{\mathcal X_T}^p,\qquad
 \norm{u(s)^p}_\infty\le(1+s)^{-qp/2}\norm u_{\mathcal X_T}^p .
\end{equation}
Since $P_{t-s}$ is a contraction on $L^1$, this gives
$\norm{\mathcal N(u)(t)}_1\le\Hp(T)\norm u_{\mathcal X_T}^p$. This is the point
where $\Hp$ appears.

\smallskip\noindent\textbf{Step 3: the weighted $L^\infty$ norm of the nonlinear
term.} Take $t>2$ and split $\mathcal N(u)(t)$ at $t/2$. On $[0,t/2]$ we have
$t-s\ge t/2$, so by \eqref{eq:L1Linfty} and \eqref{eq:up-bounds} this part is at
most $Ct^{-q/2}\Hp(T)\norm u^p_{\mathcal X_T}$. On $[t/2,t]$ we use
$L^\infty$-contractivity and $qp/2=q/2+\alpha$, which bound this part by
\[
 \norm u_{\mathcal X_T}^p\int_{t/2}^th(s)(1+s)^{-q/2-\alpha}\dd s\le
 2^{q/2}(1+t)^{-q/2}\Hp(T)\norm u^p_{\mathcal X_T}.
\]
For $t\le2$ one uses $L^\infty$-contractivity on all of $[0,t]$ in the same way.

\smallskip\noindent\textbf{Step 4: contraction and conclusion.} Steps 1--3 give
\[
 \norm{\Phi u}_{\mathcal X_T}\le C_{\rm L}\eps\mathcal U_0+C_{\rm N}\Hp(T)\norm u^p_{\mathcal X_T},
\]
and, using $|\xi^p-\zeta^p|\le p(\xi^{p-1}+\zeta^{p-1})|\xi-\zeta|$ in the same
computation,
\[
 \norm{\Phi u-\Phi v}_{\mathcal X_T}\le C_{\rm Lip}\Hp(T)\bigl(\norm u^{p-1}_{\mathcal X_T}
 +\norm v^{p-1}_{\mathcal X_T}\bigr)\norm{u-v}_{\mathcal X_T}.
\]
Let $M_\eps=2C_{\rm L}\eps\mathcal U_0$ and
$c_{\rm fp}=[4\max\{C_{\rm N},2C_{\rm Lip}\}(2C_{\rm L}\mathcal U_0)^{p-1}]^{-1}$.
If $\eps^{p-1}\Hp(T)\le c_{\rm fp}$, then $\Phi$ maps the set of nonnegative
$u$ with $\norm u_{\mathcal X_T}\le M_\eps$ into itself and is a
$\frac14$-contraction there. Its fixed point is a mild solution on $[0,T)$
satisfying \eqref{eq:X-bound} with $C=2C_{\rm L}\mathcal U_0$, and equals the
maximal solution by Lemma~\ref{lem:local}-(i). This proves (a).

\subsection{Proof of Theorem~\ref{thm:master}-(b)}\label{subsec:upper}

\begin{roadmap}
\emph{Step 1} introduces a functional $F_R$ that looks at the solution through a
Gaussian window of width $R$, and shows that it obeys the differential
inequality $F_R'\ge hR^{-q(p-1)}F_R^p$. \emph{Step 2} introduces an energy
$E(R)$ that records all the reaction seen at scales up to $R$, and bounds $F_R$
from below by it. \emph{Step 3} turns this into two inequalities for the amount
of reaction $\Lambda(R)$ in a time slab: an \emph{increment} inequality
(the energy must grow by at least $c\Lambda(\eps+E)^p$) and a \emph{cap}
($\Lambda(\eps+E)^{p-1}$ is bounded). \emph{Step 4} telescopes these two
inequalities along a geometric sequence of scales and sums the reaction over
\emph{all} scales: $\sum_jW_j\le C\eps^{-(p-1)}$. \emph{Step 5} uses finitely
many interlaced geometric grids, whose slabs cover every time, so that this sum
controls $\int h\,t^{-\alpha}\dd t$ for an arbitrary $h$.
\end{roadmap}

Fix numerical parameters $0<r^2<a<b<\omega$ (with $r$ decreased later). Let $u$
be a mild solution on $[0,T)$, with $\eps>0$ arbitrary. For $R>0$ and
$0\le t\le bR^2<T$ set
\[
 F_R(t):=R^q[P_{\omega R^2-t}u(t)](0)=\int u(t,x)K_R(t,x)\dd x,
 \qquad K_R(t,x):=R^q\Gamma_{\omega R^2-t}(0,x),
\]
so that $\int K_R(t,x)\dd x=R^q$. Thus $F_R(t)$ is the average of $u(t)$ against
the heat kernel at the origin at time $\omega R^2-t$, which is a Gaussian window
of width $R$ (up to the factor $R^q$). This is the classical backward-semigroup
functional \cite{Weissler1981,Meier1988}.

\smallskip\noindent\textbf{Step 1: an exact identity and an ODE inequality.}
Apply $P_{\omega R^2-t}$ to \eqref{eq:mild} and evaluate at $0$; since
$f\mapsto[P_\tau f](0)$ is continuous on $L^1$, it commutes with the Bochner
integral, and the semigroup law gives
\begin{equation}\label{eq:FR-identity}
 F_R(t)=\eps R^qP_{\omega R^2}u_0(0)+\int_0^t\!\!\int h(s)u(s,x)^pK_R(s,x)\dd x\dd s .
\end{equation}
So $F_R$ is absolutely continuous and nondecreasing, with
$F_R'=h\int u^pK_R\dd x$ a.e. By Jensen's inequality for the probability
measure $R^{-q}K_R\dd x$,
\begin{equation}\label{eq:FR-ode}
 F_R'(t)\ge h(t)R^{-q(p-1)}F_R(t)^p\quad\text{a.e.}
\end{equation}
By \eqref{eq:origin-gaussian}, with $c_{\rm e}=c_{\rm o}\omega^{-q/2}$,
$A_{\rm e}=A_{\rm o}/(\omega-b)$, $C_{\rm e}=C_{\rm o}(\omega-b)^{-q/2}$,
$a_{\rm e}=a_{\rm o}/\omega$, we have for $t\le bR^2$
\begin{equation}\label{eq:KR-envelope}
 c_{\rm e}e^{-A_{\rm e}\CC(0,x)^2/R^2}\le K_R(t,x)\le C_{\rm e}e^{-a_{\rm e}\CC(0,x)^2/R^2}.
\end{equation}

\begin{idea}
The identity says that $F_R$ is the linear part plus the reaction seen through
the window. Inequality \eqref{eq:FR-ode} says that $F_R$ obeys the same ODE as
a spatially constant solution, with the coefficient $h$ rescaled by the size of
the window. The envelope bound \eqref{eq:KR-envelope} says that for
$t\le bR^2$ the window is comparable to Gaussians $e^{-c\CC^2/R^2}$,
uniformly in $t$.
\end{idea}

\smallskip\noindent\textbf{Step 2: the envelope energy and a lower bound.}
To follow the reaction at all scales we measure it through Gaussians of width
$R$ by a single nondecreasing quantity.
Fix $A>0$, decrease $r$ and choose $\ell>1$ so that
$A/r^2\ge A_{\rm e}$, $\ell^2>b$, $A/\ell^2\le a_{\rm e}$, and define the
nondecreasing energy
\[
 E(R):=\int_0^{R^2}\!\!\int h(t)u(t,x)^p\exp\!\Bigl(-A\frac{\CC(0,x)^2}{R^2}\Bigr)\dd x\dd t .
\]
By \eqref{eq:origin-gaussian} and monotone convergence there are
$R_0\ge\max\{1,a^{-1/2}\}$ and $c_{\rm s}>0$, depending only on $X$ and $u_0$,
with $R^qP_{\omega R^2}u_0(0)\ge c_{\rm s}$ for $R\ge R_0$. Restricting the
nonlinear integral in \eqref{eq:FR-identity} to $t\le r^2R^2$ and using the
lower bound in \eqref{eq:KR-envelope}, with $c_F=\min\{c_{\rm s},c_{\rm e}\}$,
\begin{equation}\label{eq:F-lower}
 F_R(aR^2)\ge c_F[\eps+E(rR)],\qquad R\ge R_0,\ bR^2<T .
\end{equation}
In words, \eqref{eq:F-lower} says that at time $aR^2$ the window sees at least
the initial mass plus all the reaction that has taken place at scales up to $rR$.

\smallskip\noindent\textbf{Step 3: an increment inequality and an ODE cap.}
The reaction in the time slab $[aR^2,bR^2]$ is measured by the quantity
$\Lambda(R)$ below, normalized by the size of the window.
Let $\Lambda(R):=R^{-q(p-1)}\int_{aR^2}^{bR^2}h\dd t$. Since $q(p-1)=2\alpha$,
\begin{equation}\label{eq:Lambda-comp}
 b^{-\alpha}\Lambda(R)\le\int_{aR^2}^{bR^2}h\,t^{-\alpha}\dd t\le a^{-\alpha}\Lambda(R).
\end{equation}
Integrating \eqref{eq:FR-ode} over $[aR^2,bR^2]$ and using \eqref{eq:F-lower},
$F_R(bR^2)-F_R(aR^2)\ge c_F^p\Lambda(R)[\eps+E(rR)]^p$. On $[aR^2,bR^2]$ the
upper bound in \eqref{eq:KR-envelope} is at most
$C_{\rm e}e^{-A\CC^2/(\ell R)^2}$, and $[aR^2,bR^2]\subset[r^2R^2,\ell^2R^2]$.
If moreover $\ell^2R^2<T$, so that $E(\ell R)$ is defined, the left side is
therefore at most $C_{\rm e}[E(\ell R)-E(rR)]$. Hence
\begin{equation}\label{eq:E-increment}
 E(\ell R)-E(rR)\ge c_{\rm inc}\Lambda(R)[\eps+E(rR)]^p,\qquad c_{\rm inc}=c_F^p/C_{\rm e},
 \qquad R\ge R_0,\ \ell^2R^2<T.
\end{equation}
Moreover, since $F_R>0$ is absolutely continuous, \eqref{eq:FR-ode} gives
$(F_R^{1-p})'\le-(p-1)hR^{-q(p-1)}$, and integrating over the slab,
$0<F_R(bR^2)^{1-p}\le F_R(aR^2)^{1-p}-(p-1)\Lambda(R)$, so by \eqref{eq:F-lower}
(the ODE cap)
\begin{equation}\label{eq:cap}
 \Lambda(R)[\eps+E(rR)]^{p-1}\le C_{\rm cap}:=[(p-1)c_F^{p-1}]^{-1}.
\end{equation}
Unlike \eqref{eq:E-increment}, the cap \eqref{eq:cap} uses only $R\ge R_0$ and
$bR^2<T$.

\begin{idea}
Inequality \eqref{eq:E-increment} says that the energy grows from scale $rR$ to
scale $\ell R$ by at least $c\,\Lambda(R)[\eps+E(rR)]^p$: the more reaction in
the slab, the more energy. Inequality \eqref{eq:cap} says that
$\Lambda(R)[\eps+E(rR)]^{p-1}$ cannot be large, because otherwise the ODE
inequality \eqref{eq:FR-ode} would force $F_R$ to blow up inside the slab.
\end{idea}

\smallskip\noindent\textbf{Step 4: telescoping on a geometric grid.}
Now we chain these inequalities along a geometric sequence of scales.
Set $\mu=\ell/r$, $S_j=S_0\mu^j$ with $S_0\ge rR_0$, $Y_j=\eps+E(S_j)$ and
$W_j=\Lambda(S_j/r)$, assuming $(\mu S_{N-1})^2<T$. Then \eqref{eq:E-increment}
and \eqref{eq:cap} read $Y_{j+1}\ge Y_j(1+z_j)$ with
$z_j=c_{\rm inc}W_jY_j^{p-1}\in[0,Z]$, $Z=c_{\rm inc}C_{\rm cap}$. With
$c_Z:=\min_{0\le z\le Z}(1-(1+z)^{1-p})/z>0$,
\[
 Y_j^{1-p}-Y_{j+1}^{1-p}\ge Y_j^{1-p}[1-(1+z_j)^{1-p}]\ge c_Zc_{\rm inc}W_j,
\]
and summing over $j$,
\begin{equation}\label{eq:sum-W}
 \sum_{j=0}^{N-1}W_j\le\frac{\eps^{-(p-1)}}{c_Zc_{\rm inc}},
\end{equation}
since $Y_0\ge\eps$. This is the step that sums the contributions of all
parabolic scales; a single slab only gives $\Lambda(R)\le C\eps^{-(p-1)}$,
which is useless at criticality.

\smallskip\noindent\textbf{Step 5: interlaced grids for arbitrary $h$.}
Inequality \eqref{eq:sum-W} controls the reaction only in the slabs attached to
one grid of scales, and these slabs leave gaps in time. We cover the gaps with
finitely many shifted grids, and handle the end of the interval separately.

\smallskip\noindent\emph{(5a) Interlaced grids.}
Write $W(S):=\Lambda(S/r)$, so that $W_j=W(S_j)$. The slab attached to $W(S)$ is
$I(S)=[A_*S^2,B_*S^2]$ with $A_*=a/r^2$ and $B_*=b/r^2$. A single grid of ratio $\mu$ leaves gaps between consecutive slabs.
Let $K_{\rm g}\ge1$ be the least integer with $\eta:=\mu^{1/K_{\rm g}}$
satisfying $\eta^2\le b/a$, and put $\widehat S_i=S_*\eta^i$ with $S_*=rR_0$.
Consecutive slabs $I(\widehat S_i)$ then overlap, so they cover
$[A_*S_*^2,B_*\widehat S_{i_0}^2]$, and each residue class
$i\equiv k\pmod{K_{\rm g}}$ is a geometric grid of ratio $\mu$. Put
$T_1=bR_0^2$ and $\kappa=b/(\ell^2\eta^2)\in(0,1)$.

\smallskip\noindent\emph{(5b) The bulk of the interval.}
Let $\mu^2S_*^2\le\tau<T$. Applying \eqref{eq:sum-W} to each of the $K_{\rm g}$
residue classes and using \eqref{eq:Lambda-comp} we get
\begin{equation}\label{eq:interior}
 \int_{aR_0^2}^{\kappa\tau}h\,t^{-\alpha}\dd t\le\frac{a^{-\alpha}K_{\rm g}}{c_Zc_{\rm inc}}\eps^{-(p-1)} .
\end{equation}
Indeed, the admissible slabs cover up to $B_*\widehat S_{i_\tau}^2>\kappa\tau$.

\smallskip\noindent\emph{(5c) The end of the interval.}
The remaining piece $[\kappa\tau,\tau]$ is handled by the cap alone. For
$\tau\ge T_1$ and $J\ge1$ with $(a/b)^{J-1}\tau\ge T_1$, put
$\tau_j=(a/b)^j\tau$ and $R_j^2=\tau_j/b$. Then
$[aR_j^2,bR_j^2]=[\tau_{j+1},\tau_j]$, and \eqref{eq:cap} with $E\ge0$ gives
\begin{equation}\label{eq:terminal}
 \int_{(a/b)^J\tau}^\tau h\,t^{-\alpha}\dd t\le Ja^{-\alpha}C_{\rm cap}\eps^{-(p-1)} .
\end{equation}

\smallskip\noindent\emph{(5d) Putting the pieces together.}
Let $J_0\ge1$ be the least integer with $(a/b)^{J_0}\le\kappa$, let
$\tau_1=T_1/\kappa$, and let $J_1=1+\log(1/\kappa)/\log(b/a)$. For
$T_1\le\tau\le\tau_1$, \eqref{eq:terminal} with the largest admissible $J$,
that is, the one with $(a/b)^J\tau<T_1\le(a/b)^{J-1}\tau$, covers $[T_1,\tau]$,
and this $J$ satisfies $J<J_1$. For $\tau>\tau_1$, \eqref{eq:interior} covers
$[T_1,\kappa\tau]$ and \eqref{eq:terminal} with $J=J_0$ covers $[\kappa\tau,\tau]$.
This gives the first inequality in \eqref{eq:upper-large} with
$C_{\rm up}=a^{-\alpha}\max\{J_1C_{\rm cap},K_{\rm g}/(c_Zc_{\rm inc})+J_0C_{\rm cap}\}$.
The second follows from $(1+t)^{-\alpha}\le t^{-\alpha}$. $\qed$

The interlacing is the only place where arbitrary $h$ requires extra work, and
it is geometric rather than analytic.

\subsection{Proof of Theorem~\ref{thm:master}-(i),(ii) and Theorem~\ref{thm:threshold}}

Parts (a) and (b) of Theorem~\ref{thm:master} already contain the lifespan law;
what remains is to combine them. Roughly, (a) says that blow-up cannot occur
while $\eps^{p-1}\Hp(T)\le c_{\rm fp}$, and (b) says that it must occur before
$\Hp$ exceeds $\Hp(T_1)+C_{\rm up}\eps^{-(p-1)}$.

\begin{proof}[Proof of Theorem~\ref{thm:master}-(i),(ii)]
(i) is (a) with $T=\infty$. For (ii), if $\Hp(\infty)=\infty$ and $T_\eps=\infty$,
letting $\tau\to\infty$ in \eqref{eq:upper-large} gives a contradiction, so
$T_\eps<\infty$. Suppose $\eps^{p-1}\Hp(T_\eps)<c_{\rm fp}$. Continuity of
$\Hp$ gives $T>T_\eps$ with $\eps^{p-1}\Hp(T)\le c_{\rm fp}$, contradicting (a);
so $\Hp(T_\eps)\ge c_{\rm fp}\eps^{-(p-1)}$. For
$\eps\le\eps_0$ we have $\Hp(T_1)\le C_{\rm up}\eps^{-(p-1)}$, and
\eqref{eq:upper-large} with $\tau\uparrow T_\eps$ gives the upper bound.
\end{proof}

\begin{proof}[Proof of Theorem~\ref{thm:threshold}]
(i) is Lemma~\ref{lem:comparison} and the definition of $\eps_*$.

(ii) If $\Hp(\infty)=\infty$, Theorem~\ref{thm:master}-(ii) gives $T_\eps<\infty$
for every $\eps>0$, so $\{\eps:T_\eps=\infty\}=\emptyset$ and $\eps_*=0$. If
$\Hp(\infty)<\infty$, Theorem~\ref{thm:master}-(i) gives $T_\eps=\infty$ for
$\eps^{p-1}\Hp(\infty)\le c_{\rm fp}$, which is the lower bound. For the upper
bound let $T_\eps=\infty$ and let $\tau\to\infty$ in \eqref{eq:upper-large}:
since $(1+t)^{-\alpha}\le t^{-\alpha}$,
\[
 \Hp(\infty)-\Hp(T_1)=\int_{T_1}^\infty h(t)(1+t)^{-\alpha}\dd t\le C_{\rm up}\eps^{-(p-1)} .
\]
If $\Hp(\infty)\ge2\Hp(T_1)$ the left side is at least $\frac12\Hp(\infty)$, so
$\eps^{p-1}\le2C_{\rm up}/\Hp(\infty)$ for every $\eps$ with $T_\eps=\infty$,
hence for their supremum $\eps_*$.

(iii) For large data one window and one time interval on which $h$ has positive
mass suffice: the inequality \eqref{eq:FR-ode} then forces blow-up once $\eps$
is large. If $h=0$ a.e., $u=\eps P_tu_0$ is global. Otherwise take $t_0<t_1$ with
$M=\int_{t_0}^{t_1}h>0$ and $R$ with $\omega R^2>t_1$, and suppose $T_\eps>t_1$.
The identity \eqref{eq:FR-identity} and \eqref{eq:FR-ode} hold on $[0,t_1]$ (the
kernel $\Gamma_{\omega R^2-t}(0,\cdot)$ is bounded there), and $F_R(t)\ge
F_R(0)=\eps R^qm_R$ with $m_R=P_{\omega R^2}u_0(0)>0$. Then
$(F_R^{1-p})'\le-(p-1)R^{-q(p-1)}h$, and integrating over $[t_0,t_1]$,
\[
 (p-1)R^{-q(p-1)}M<F_R(t_0)^{1-p}\le\eps^{1-p}R^{-q(p-1)}m_R^{1-p},
\]
i.e.\ $\eps<\eps_1:=m_R^{-1}((p-1)M)^{-1/(p-1)}$. So $\eps\ge\eps_1$ forces
$T_\eps\le t_1$, and $\eps_*\le\eps_1<\infty$.
\end{proof}

\begin{remark}
The origin is a convenient center because \eqref{eq:ball-origin} holds for all
radii, but it is not essential: for fixed $x_0$ one has
$|\BX(x_0,R)|\asymp R^q$ for $R\ge2\CC(0,x_0)$, and the argument can be run at
large scales around $x_0$.
\end{remark}

\subsection{Proof of Theorem~\ref{thm:apriori} and Corollary~\ref{cor:small}}\label{subsec:apriori-proof}

These are energy-type estimates: they use only the maximum principle, the
$L^k$-contractivity of the semigroup and a comparison with an ordinary
differential equation.

By Lemma~\ref{lem:classical-mild}, $u$ is a mild solution on $[0,T)$ and $T\le T_\eps$; only \eqref{eq:mild} is used below.

\emph{(a)} The Duhamel term in \eqref{eq:mild} is nonnegative, so
$u(t)\ge\eps P_tu_0$. Integrating \eqref{eq:mild} in $x$ and using that $P_\tau$
conserves mass gives the identity, since $\norm{u(s)}_p^p=\norm{u(s)^p}_1$.

\emph{(b)} $P_\tau$ is a contraction on $L^k$ and
$\norm{u^p}_k\le\norm u_\infty^{p-1}\norm u_k$, so \eqref{eq:mild} gives
\[
 \norm{u(t)}_k\le\eps\norm{u_0}_k+\int_0^th(s)\norm{u(s)}_\infty^{p-1}\norm{u(s)}_k\dd s .
\]
The functions involved are measurable and bounded on $[0,t]$ (the sup norm is
lower semicontinuous on $L^1$), so Gronwall's lemma gives (b).

\emph{(c)} For the last claim apply the argument to the maximal mild solution, i.e.\ with
$T=T_\eps$. Let $t_0<T$ with $\Theta_\eps(t_0)<1$, $v_0=\eps\norm{u_0}_\infty$ and
$v(t)=v_0(1-\Theta_\eps(t))^{-1/(p-1)}$ for $t\le t_0$. This is the absolutely
continuous solution of $v=v_0+\int_0^thv^p$, and since $P_\tau1=1$ and
$P_tu_0\le\norm{u_0}_\infty$,
\[
 v(t)\ge\eps P_tu_0+\int_0^th(s)P_{t-s}\bigl(v(s)^p\bigr)\dd s .
\]
Let $w_0=0$ and $w_{k+1}=\eps P_tu_0+\int_0^thP_{t-s}(w_k^p)\dd s$. By positivity
of $P_\tau$ and induction, $0\le w_k\le w_{k+1}\le\min\{u,v\}$. As in the proof
of Lemma~\ref{lem:comparison}, $w=\sup_kw_k$ is a bounded mild solution on
$[0,t_0]$, so $w=u$ by Lemma~\ref{lem:local}-(i), and therefore $u\le v$. Because
$hv^{p-1}=v'/v$,
\[
 \int_0^th\norm u_\infty^{p-1}\dd s\le\int_0^t\frac{v'}v\dd s
 =\log\frac{v(t)}{v_0}=-\frac1{p-1}\log\bigl(1-\Theta_\eps(t)\bigr),
\]
and (b) gives \eqref{eq:Lk-ODE}. If $T_\eps<T_*$, then $\norm{u(t)}_\infty\le v(t)$
stays bounded on $[0,T_\eps)$, contradicting Lemma~\ref{lem:local}-(iii); hence
$T_\eps\ge T_*$. \qed

\begin{proof}[Proof of Corollary~\ref{cor:small}]
By Lemma~\ref{lem:classical-mild}, $u$ is a mild solution, and by
Theorem~\ref{thm:master}-(a), $N:=N_T(u)\le C\eps$ with $C=2C_{\rm L}\mathcal U_0$.
Interpolating between $\norm{u(t)}_1\le N$ and
$\norm{u(t)}_\infty\le N(1+t)^{-q/2}$ gives the first bound. Next,
$u-\eps P_tu_0=\mathcal N(u)$ and by Subsection~\ref{subsec:lower},
$\norm{\mathcal N(u)(t)}_1\le\Hp(T)N^p$ and
$(1+t)^{q/2}\norm{\mathcal N(u)(t)}_\infty\le C_{\rm N}\Hp(T)N^p$; interpolate
again. The mass bounds follow from Theorem~\ref{thm:apriori}-(a) and
$\norm{u(s)^p}_1\le N^p(1+s)^{-\alpha}$.
\end{proof}

\section{Proofs of the explicit laws}\label{sec:applications}

All explicit laws are consequences of the master law \eqref{eq:master}; the work
is to compute $\Hp$ for the given $h$ and to invert. Let $0<\eps\le\eps_0$. If
$\Hp(\infty)=\infty$, Theorem~\ref{thm:master}-(ii) gives
$\Hp(T_\eps)\ge c_{\rm fp}\eps^{-(p-1)}\to\infty$ as $\eps\to0$; since $\Hp$ is
bounded on bounded intervals, $T_\eps\to\infty$. Therefore: if
$\Hp(T)\asymp G(T)$ for large $T$, then
\begin{equation}\label{eq:G-transfer}
 G(T_\eps)\asymp\eps^{-(p-1)}\qquad(\eps\to0).
\end{equation}
If $\Hp(\infty)<\infty$, small data are global by
Theorem~\ref{thm:master}-(i). The task is thus to compute $\Hp$ and invert
\eqref{eq:G-transfer}; the elementary asymptotics needed for this are in
Subsection~\ref{subsec:asympt}.

\begin{proof}[Proof of Theorem~\ref{thm:powerlog}]
(a) \emph{The case $h\equiv1$.} Here $\Hp(T)=\int_0^T(1+t)^{-\alpha}\dd t$. If
$\alpha<1$ (that is, $p<p_F$), then $\Hp(T)\asymp T^{1-\alpha}$, and
\eqref{eq:G-transfer} gives $T_\eps^{1-\alpha}\asymp\eps^{-(p-1)}$. If
$\alpha=1$ (that is, $p=1+2/q$), then $\Hp(T)=\log(1+T)$, and
\eqref{eq:G-transfer} gives $\log T_\eps\asymp\eps^{-(p-1)}$. If $\alpha>1$, then
$\Hp(\infty)<\infty$ and small data are global.

(b) \emph{The case $h\asymp(1+t)^\beta$.} Since
$h(t)(1+t)^{-\alpha}\asymp t^{\rho-1}$ for large $t$, we have
$\Hp(T)\asymp1+\int_{T_h}^Tt^{\rho-1}\dd t$, which is $\asymp T^\rho$,
$\asymp\log T$, or bounded, according as $\rho>0$, $\rho=0$, or $\rho<0$. Now
\eqref{eq:G-transfer} gives the three laws. The condition $\rho=0$ means
$\frac q2(p-1)=1+\beta$, i.e.\ $p=1+2(1+\beta)/q$; and if $\beta\le-1$, then
$\rho\le-\alpha<0$ for every $p>1$.

(c) \emph{Logarithmic factors.} Here $h(t)t^{-\alpha}\asymp t^{\rho-1}(\log t)^\gamma$.
By Lemma~\ref{lem:asympt}-(i), $\Hp(T)\asymp T^\rho(\log T)^\gamma$ for
$\rho>0$, and $\Hp(\infty)<\infty$ for $\rho<0$. For $\rho>0$ we invert with
Lemma~\ref{lem:asympt}-(ii), taking $\Sigma=\eps^{-(p-1)}$; since
$\log\Sigma\asymp\log\frac1\eps$, this gives the stated law.

(d) \emph{Critical hierarchy.} Now $\alpha=1+\beta$, so
$(1+t)^\beta t^{-\alpha}\asymp t^{-1}$ and
$h(t)t^{-\alpha}\asymp t^{-1}\prod_{j<j_*}L_j^{-1}\,L_{j_*}^\chi$. The iterated
logarithms satisfy $L_1'=(e+t)^{-1}$ and $L_{j+1}'=L_j'/(e+L_j)$, so by
induction $L_{j_*}'\asymp t^{-1}\prod_{j<j_*}L_j^{-1}$. Therefore
$h\,t^{-\alpha}\asymp L_{j_*}^\chi L_{j_*}'$, and integrating,
\[
 \Hp(T)\asymp L_{j_*}(T)^{\chi+1}\ (\chi>-1),\qquad
 \Hp(T)\asymp\log L_{j_*}(T)\asymp L_{j_*+1}(T)\ (\chi=-1),
\]
where for $\chi=-1$ we used $0\le L_{j_*+1}-\log L_{j_*}\le\log(1+e)$, while
$\Hp(\infty)<\infty$ for $\chi<-1$. Applying \eqref{eq:G-transfer} gives
$L_{j_*}(T_\eps)\asymp\eps^{-(p-1)/(\chi+1)}$ and
$L_{j_*+1}(T_\eps)\asymp\eps^{-(p-1)}$, respectively.
\end{proof}

\begin{proof}[Proof of Theorem~\ref{thm:fast}]
(a) \emph{Exponential growth.} We have
$h(t)(1+t)^{-\alpha}\asymp e^{\theta t}t^{\beta-\alpha}$, so by
Lemma~\ref{lem:asympt}-(iii) $\Hp(T)\asymp e^{\theta T}T^{\beta-\alpha}$. Taking
logarithms in \eqref{eq:G-transfer},
\[
 \theta T_\eps+(\beta-\alpha)\log T_\eps=(p-1)\log\tfrac1\eps+O(1).
\]
Thus $T_\eps\sim\frac{p-1}\theta\log\frac1\eps$, hence
$\log T_\eps=\log\log\frac1\eps+O(1)$, and substituting this back into the
display gives the claim.

(b) \emph{Integrable reactions.} In both cases $\Hp(\infty)<\infty$ for every
$p>1$, so small data are global by Theorem~\ref{thm:master}-(i). If
$\supp h\subset[0,T_{\rm off}]$, then \eqref{eq:mild} and the semigroup law give
$u(t)=P_{t-T_{\rm off}}u(T_{\rm off})$ for $t\ge T_{\rm off}$: after $T_{\rm off}$ the solution just follows
the heat flow.

(c) \emph{Periodic reactions.} Let $w(t)=(1+t)^{-\alpha}$, which is decreasing,
and let $\bar h$ be the mean of $h$ over a period. Then, for every $k$,
\[
 \varpi\bar h\,w((k+1)\varpi)\le\int_{k\varpi}^{(k+1)\varpi}hw\le\varpi\bar h\,w(k\varpi),
\]
and $w(k\varpi)/w((k+1)\varpi)\le(1+\varpi)^\alpha$. Summing over the periods
contained in $[0,T]$ and treating the incomplete last period by the
monotonicity of $w$, we get $\Hp(T)\asymp\int_1^T(1+t)^{-\alpha}\dd t$ for large
$T$ if this integral diverges (i.e.\ $\alpha\le1$), and $\Hp(\infty)<\infty$
otherwise. These are the same relations as for $h\equiv1$, so the laws of
Theorem~\ref{thm:powerlog}-(a) follow.

(d) \emph{Dense blocks.} On $[4^k,4^{k+1}]$ the inactive part
$(2\cdot4^k,4^{k+1}]$ has twice the length of the active part
$[4^k,2\cdot4^k]$, and the weight $t^{-\alpha}$ is smaller there. So the
inactive part carries at most twice the weighted mass of the active part, i.e.\
each active interval carries at least one third of the weighted mass of
$[4^k,4^{k+1}]$. Since also $(1+t)^{-\alpha}\asymp t^{-\alpha}$ for
$t\ge1$, we get $\int_{\mathcal A\cap[1,T]}t^{-\alpha}\dd t\asymp\int_1^Tt^{-\alpha}\dd t$
for large $T$, and the claim follows as in (c). (The comparison fails for $T$
near $4$, where the left side vanishes.)

(e) \emph{Sparse blocks.} The block $[2^k,2^k+2^{\vartheta k}]$ has length
$2^{\vartheta k}$ and lies in $[2^k,3\cdot2^k]$, so it carries weighted mass
between $3^{-\alpha}2^{(\vartheta-\alpha)k}$ and $2^{(\vartheta-\alpha)k}$. For
$2^i\le T<2^{i+1}$ the integral over $\mathcal A_\vartheta\cap[1,T]$ therefore
lies between $3^{-\alpha}\sum_{k<i}2^{(\vartheta-\alpha)k}$ and
$\sum_{k\le i}2^{(\vartheta-\alpha)k}$. Hence $\Hp(T)\asymp T^{\vartheta-\alpha}$,
$\Hp(T)\asymp\log T$, or $\Hp(\infty)<\infty$ according as $\alpha<\vartheta$,
$\alpha=\vartheta$, or $\alpha>\vartheta$, and \eqref{eq:G-transfer} gives the
laws.

\emph{The last statement.} Suppose $h=0$ a.e.\ on $(t_1,t_2)$ and
$T_\eps\in(t_1,t_2]$. For $t_1\le t<T_\eps$, \eqref{eq:mild} and the semigroup
law give $u(t)=P_{t-t_1}u(t_1)$, so $\norm{u(t)}_\infty\le\norm{u(t_1)}_\infty$
stays bounded as $t\uparrow T_\eps$. This contradicts
Lemma~\ref{lem:local}-(iii).
\end{proof}

\begin{proof}[Proof of Theorem~\ref{thm:nonstandard}]
(i) \emph{Delayed pulse.} Since $\supp h\subset[T_0,T_0+1]$, we have
$\Hp(\infty)\le M(1+T_0)^{-\alpha}$. By Theorem~\ref{thm:threshold}-(ii) and
$\alpha/(p-1)=q/2$,
\[
 \eps_*\ge c_{\rm fp}^{1/(p-1)}(1+T_0)^{q/2}M^{-1/(p-1)}.
\]
For the upper bound we use the proof of Theorem~\ref{thm:threshold}-(iii) with
$t_0=T_0$, $t_1=T_0+1$ and $R=\max\{R_0,\sqrt{2(T_0+1)/\omega}\}\le C(1+T_0)^{1/2}$.
It gives $\eps_*\le m_R^{-1}((p-1)M)^{-1/(p-1)}$, and
$m_R\ge c_{\rm s}R^{-q}$ by Step~2 of Subsection~\ref{subsec:upper}, so
$\eps_*\le CR^qM^{-1/(p-1)}\le C(1+T_0)^{q/2}M^{-1/(p-1)}$.

(ii) \emph{Sparse bursts.} For $\vartheta=0$ and any $p>1$ we have
$\alpha>\vartheta$, so small data are global by Theorem~\ref{thm:fast}-(e),
while $h\not\to0$ and $\int h=\infty$. Now let $0<\vartheta<1$ and
$\alpha\le\vartheta$. Then $\Hp(\infty)=\infty$, so $T_\eps<\infty$. The
coefficient $h$ vanishes on each gap $(2^k+2^{\vartheta k},2^{k+1})$ and on
$(0,2)$, so the last statement of Theorem~\ref{thm:fast} excludes $T_\eps$ from
$(2^k+2^{\vartheta k},2^{k+1}]$ and from $(0,2]$. Every point of $(2,\infty)$
lies in some $(2^k,2^{k+1}]$, so the only possibility left is
$T_\eps\in(2^k,2^k+2^{\vartheta k}]$.

(iii) \emph{Staircase lifespan.} We first explain the idea. The blocks
$[s_{2j},s_{2j+1})$ are separated by enormously long gaps, because
$s_{k+1}=s_k^2$. Before time $s_{2j}$ the clock $\Hp$ has accumulated only about
$s_{2j-1}^{1-\alpha}=s_{2j}^{(1-\alpha)/2}$, and during $[s_{2j},2s_{2j}]$ it
increases by about $s_{2j}^{1-\alpha}$, which is much more. So if the level
$\eps^{-(p-1)}$ lies between these two values, the lifespan is trapped in
$(s_{2j},2s_{2j}]$ and does not move as $\eps$ varies over a long range. Here
are the details.

\smallskip\noindent\emph{Setting.} Write $\mathcal W(a,b)=\int_a^b(1+t)^{-\alpha}\dd t$
and $\Sigma=\eps^{-(p-1)}$, so that
$\Hp(T)=\int_{\mathcal A\cap[0,T]}(1+t)^{-\alpha}\dd t\le T^{1-\alpha}/(1-\alpha)$.
By Theorem~\ref{thm:master}, $c_{\rm fp}\Sigma\le\Hp(T_\eps)\le
T_\eps^{1-\alpha}/(1-\alpha)$, so the $\liminf$ in the statement is at least
$(p-1)/(1-\alpha)$.

\smallskip\noindent\emph{Two bounds on the clock.} Since $s_{k+1}=s_k^2$, the
terms $\mathcal W(s_{2i},s_{2i+1})\le s_{2i+1}^{1-\alpha}/(1-\alpha)$ grow at least
geometrically, so for some $C_P=C_P(\alpha)$
\[
 \Hp(s_{2j})\le C_Ps_{2j-1}^{1-\alpha}=C_Ps_{2j}^{(1-\alpha)/2},
 \qquad
 \Hp(2s_{2j})-\Hp(s_{2j})\ge c_1s_{2j}^{1-\alpha},\ c_1=3^{-\alpha}.
\]

\smallskip\noindent\emph{Trapping the lifespan.} Put
$\Sigma_j^-=\frac{2C_P}{c_{\rm fp}}s_{2j}^{(1-\alpha)/2}$ and
$\Sigma_j^+=\frac{c_1}{4C_{\rm up}}s_{2j}^{1-\alpha}$; the interval
$[\Sigma_j^-,\Sigma_j^+]$ is nonempty for large $j$. Let
$\Sigma\in[\Sigma_j^-,\Sigma_j^+]$. Then
$\Hp(T_\eps)\ge c_{\rm fp}\Sigma>\Hp(s_{2j})$, so $T_\eps>s_{2j}$. Suppose that
$T_\eps>2s_{2j}$. For large $j$ we have $\eps\le\eps_0$, so
\eqref{eq:upper-large} at $\tau=2s_{2j}$ gives
$c_1s_{2j}^{1-\alpha}\le\Hp(2s_{2j})\le2C_{\rm up}\Sigma\le\frac12c_1s_{2j}^{1-\alpha}$,
a contradiction. Hence $s_{2j}<T_\eps\le2s_{2j}$ for all $\eps\in[\eps_j^-,\eps_j^+]$,
where $\eps_j^\pm=(\Sigma_j^\mp)^{-1/(p-1)}$, and
$\eps_j^+/\eps_j^-\ge s_{2j}^{c_*}$ with $c_*=(1-\alpha)/(4(p-1))$ for large $j$.

\smallskip\noindent\emph{No power law.} At the amplitude corresponding to
$\Sigma_j^+$ we have $\log T_\eps/\log(1/\eps)\to(p-1)/(1-\alpha)$, and at the
amplitude corresponding to $\Sigma_j^-$ it tends to $2(p-1)/(1-\alpha)$. The two
limits differ, which gives the statements on $\liminf$ and $\limsup$, and shows
that no power law $T_\eps\asymp\eps^{-\kappa}$ exists.
\end{proof}

\section{Beyond power nonlinearities}\label{sec:general}

The power $u^p$ enters the analysis in two ways: through monotonicity and
convexity, which are soft properties, and through exact scale invariance,
$f(\lambda s)=\lambda^pf(s)$, which drives the upper estimate. In this section
we record what survives for
\begin{equation}\label{eq:main-f}
 u_t-\Lcal u=h(t)f(u),\qquad u(0)=\eps u_0,
\end{equation}
under the following assumption.

\smallskip
\noindent\textbf{(F)} \ $f:[0,\infty)\to[0,\infty)$ is locally Lipschitz,
$f(0)=0$, $f>0$ on $(0,\infty)$, and $s\mapsto f(s)/s$ is nondecreasing.
\smallskip

\noindent We say that $f$ satisfies \emph{Osgood's condition} if
$\int_1^\infty ds/f(s)<\infty$, that is, if the ordinary differential equation
$y'=f(y)$ blows up in finite time. Mild solutions and the lifespan $T^f_\eps$ are defined as before.
Lemma~\ref{lem:local} holds with $pB^{p-1}$ replaced by the Lipschitz constant
of $f$ on $[0,B]$ (only local Lipschitz continuity and $f(0)=0$ are needed), and
the comparison Lemma~\ref{lem:comparison} holds because $f$ is nondecreasing
under (F).

\begin{inwords}
Osgood's condition on $f$ is necessary for blow-up
(Proposition~\ref{prop:osgood}); if $f$ is squeezed between multiples of $s^p$,
the whole lifespan law persists (Theorem~\ref{thm:sandwich}); for a general $f$
we can prove only a lower bound for the lifespan
(Theorem~\ref{thm:Hf-lower}), and an upper bound that is sharp only when the
weighted integral is dominated by late times (Proposition~\ref{prop:window}).
The sharp upper bound for a general $f$ is open.
\end{inwords}

\subsection{Osgood's condition and nonlinearities comparable to a power}

\begin{proposition}\label{prop:osgood}
If $f$ satisfies \textup{(F)} and $\int_1^\infty ds/f(s)=\infty$, then
$T^f_\eps=\infty$ for every $\eps>0$, $u_0$ and $h$ as in \eqref{eq:h-ass}.
\end{proposition}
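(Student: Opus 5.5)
The plan is to dominate the mild solution by a spatially constant supersolution coming from the ODE, exactly as in the proof of Theorem~\ref{thm:apriori}-(c), with the explicit power-law solution replaced by the Osgood primitive. Put $v_0=\eps\norm{u_0}_\infty$ and
\[
 G(y):=\int_{v_0}^y\frac{\dd s}{f(s)},\qquad y\ge v_0 .
\]
Since $f>0$ on $(0,\infty)$ and $\int_1^\infty\dd s/f(s)=\infty$, $G$ is a strictly increasing bijection from $[v_0,\infty)$ onto $[0,\infty)$. Set $v(t):=G^{-1}\bigl(\int_0^th\bigr)$, which is finite for every $t\ge0$ because $h\in L^1_{\rm loc}$. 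Then $v$ is absolutely continuous with $v'=h f(v)$ a.e., and hence $v=v_0+\int_0^th f(v)$. It is nondecreasing and locally bounded on $[0,\infty)$.

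Next I check that $v$ is a supersolution of the Duhamel equation. Since $P_\tau1=1$ and $P_tu_0\le\norm{u_0}_\infty$,
\[
 v(t)\ge\eps P_tu_0+\int_0^th(s)P_{t-s}\bigl(f(v(s))\bigr)\dd s .
\]
Now fix $t_0<T^f_\eps$ and run the monotone iteration $w_0=0$, $w_{k+1}=\eps P_tu_0+\int_0^thP_{t-s}f(w_k)\dd s$ on $[0,t_0]$. Because $f$ is nondecreasing under (F) and $P_\tau$ is positive, induction gives $0\le w_k\le w_{k+1}\le\min\{u,v\}$. As in Lemma~\ref{lem:comparison}, the limit $w$ is a bounded mild solution, so $w=u$ by uniqueness. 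The uniqueness here is Lemma~\ref{lem:local}-(i) with the local Lipschitz constant of $f$, as stated before the proposition. Therefore $\norm{u(t)}_\infty\le v(t)$ on $[0,T^f_\eps)$.

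Finally, suppose $T^f_\eps<\infty$. Then $\norm{u(t)}_\infty\le v(T^f_\eps)<\infty$ on $[0,T^f_\eps)$, which contradicts the blow-up alternative Lemma~\ref{lem:local}-(iii) (valid for $f$ as remarked). Hence $T^f_\eps=\infty$.

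The only delicate points are two technical ones. First, the iteration needs the $L^1$-valued setting: each $w_k\in C([0,t_0];L^1)$ by Lemma~\ref{lem:duhamel-cont}, since $f(w_k)\le L_Bw_k$ with $w_k\le u$ bounded in $L^1$ and $f(0)=0$. Second, $v'=hf(v)$ must hold for merely integrable $h$. This follows from the chain rule for $G^{-1}$, which is $C^1$ with derivative $f(G^{-1})$, composed with the absolutely continuous function $\int_0^th$. The monotonicity of $f(s)/s$ is not even needed, only that $f$ is nondecreasing (which follows from (F)).
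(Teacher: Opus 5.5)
Your proof is correct. It differs from the paper's in where the comparison with the ODE is carried out.

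The paper never builds a supersolution. It sets $m(t)=\norm{u(t)}_\infty$ and uses $L^\infty$-contractivity of $P_{t-s}$ together with $\norm{f(u(s))}_\infty\le f(m(s))$ to get the scalar inequality $m(t)\le\eps\norm{u_0}_\infty+\int_0^th f(m)\dd s$. Calling the right-hand side $y$, it has $y'\le hf(y)$ a.e., hence $G(y(t))\le\int_0^th\dd s$ with the same primitive $G$ you use (Bihari--LaSalle). Divergence of the Osgood integral then forces $y$, and hence $m$, to stay bounded on $[0,T^f_\eps)$.

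You instead solve the ODE explicitly, $v=G^{-1}(\int_0^th)$. You check that it is a spatially constant supersolution of the Duhamel equation, and then get $u\le v$ pointwise through the monotone iteration and uniqueness. This is exactly the template of Theorem~\ref{thm:apriori}-(c).

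The paper's route is lighter. It needs no iteration, no appeal to uniqueness for $f$, and no chain-rule check for $G^{-1}\circ\int_0^th$ with $h$ only locally integrable. The only technical input is the measurability of $m$, which follows from lower semicontinuity of the sup norm on $L^1$.

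Your route costs a little more machinery but yields a pointwise comparison with the ODE flow, the natural extension of Theorem~\ref{thm:apriori}-(c) to general $f$. Quantitatively the two bounds coincide, since the paper's inequality also gives $\norm{u(t)}_\infty\le y(t)\le G^{-1}\bigl(\int_0^th\dd s\bigr)=v(t)$. Both arguments use only that $f$ is positive and nondecreasing, and both invoke the Osgood divergence at the same point.
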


\begin{proof}
Suppose $T^f_\eps<\infty$ and put $m(t)=\norm{u(t)}_\infty$ for
$t<T^f_\eps$. Since $P_{t-s}$ is a contraction on $L^\infty$,
\eqref{eq:mild} (with $u^p$ replaced by $f(u)$) gives
\[
 m(t)\le c+\int_0^th(s)\,f\bigl(m(s)\bigr)\dd s,\qquad c:=\eps\norm{u_0}_\infty>0 .
\]
Let $y(t)$ be the right-hand side. Then $y$ is absolutely continuous and,
since $f$ is nondecreasing under \textup{(F)} and $m\le y$,
$y'=h\,f(m)\le h\,f(y)$ a.e. Put $\mathcal G(x)=\int_c^x d\sigma/f(\sigma)$ for
$x\ge c$. Dividing by $f(y)>0$ gives $(\mathcal G(y))'\le h$, so
\[
 \mathcal G\bigl(y(t)\bigr)\le\int_0^th\dd s\le\int_0^{T^f_\eps}h\dd s<\infty
 \qquad(t<T^f_\eps).
\]
This is the Bihari--LaSalle inequality. Since $\int_1^\infty ds/f(s)=\infty$,
we have $\mathcal G(x)\to\infty$ as $x\to\infty$, so $y$, and hence $m\le y$, is
bounded on $[0,T^f_\eps)$. This contradicts Lemma~\ref{lem:local}-(iii). Hence
$T^f_\eps=\infty$.
\end{proof}

So Osgood's condition is necessary for blow-up; the two-sided law cannot hold
for, e.g., $f(s)=s\log(e+s)$. Comparison with powers transfers the whole law
to nonlinearities comparable to $s^p$.

\begin{inwords}
If $f$ is squeezed between two multiples of $s^p$, where the lower bound holds
for all $s$ and the upper bound near $s=0$, then the lifespan law of
Theorem~\ref{thm:master} persists, with different constants. This covers
$s^p+s^r$ with $r\ge p$, $s^p\log(e+s)$, $s^pe^s$ and $s^p(1+s)^m$ with $m\ge0$.
\end{inwords}

\begin{theorem}\label{thm:sandwich}
Let $f$ be nondecreasing and locally Lipschitz with $f(0)=0$, and for some
$p>1$, $0<c_0\le C_f$, $\delta>0$,
\begin{equation}\label{eq:sandwich}
 f(s)\ge c_0s^p\ \ (s\ge0),\qquad f(s)\le C_f s^p\ \ (0\le s\le\delta).
\end{equation}
Let $c_{\rm fp},C_{\rm up},T_1,C$ be as in Theorem~\ref{thm:master} and
\eqref{eq:X-bound}, and $\eps_\delta=\delta/C$.
\begin{enumerate}[label=\textup{(\roman*)}]
\item If $\eps\le\eps_\delta$ and $\eps^{p-1}C_f\Hp(T)\le c_{\rm fp}$, then
$T^f_\eps\ge T$. Hence, if $T^f_\eps<\infty$ and $\eps\le\eps_\delta$,
$\Hp(T^f_\eps)\ge(c_{\rm fp}/C_f)\eps^{-(p-1)}$.
\item If $\Hp(\infty)=\infty$, then $T^f_\eps<\infty$ for every $\eps>0$ and
$\Hp(T^f_\eps)\le\Hp(T_1)+(C_{\rm up}/c_0)\eps^{-(p-1)}$.
\item If $\Hp(\infty)<\infty$, then $T^f_\eps=\infty$ for
$\eps\le\min\{\eps_\delta,(c_{\rm fp}/(C_f\Hp(\infty)))^{1/(p-1)}\}$.
\end{enumerate}
In particular $\Hp(T^f_\eps)\asymp\eps^{-(p-1)}$ for small $\eps$ when
$\Hp(\infty)=\infty$, and Theorems~\ref{thm:powerlog}--\ref{thm:fast} apply
verbatim to $f(s)=s^p+s^r$ $(r\ge p)$, $s^p\log(e+s)$, $s^pe^s$, $s^p(1+s)^m$
$(m\ge0)$.
\end{theorem}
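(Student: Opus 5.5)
The plan is to obtain the three items by rerunning the two halves of the proof of Theorem~\ref{thm:master} with $f$ in place of $u^p$. The lower bound uses $f\le C_fs^p$, which is available because the fixed point stays below $\delta$. The upper bound uses $f\ge c_0s^p$, which holds for all $s$ and so survives blow-up.

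\emph{(i) and (iii).} We run the contraction argument of Subsection~\ref{subsec:lower}. The natural setting is the ball of nonnegative $u\in\mathcal X_T$ with $\norm u_{\mathcal X_T}\le M_\eps=C\eps$, where $C=2C_{\rm L}\mathcal U_0$. There $\norm{u(t)}_\infty\le C\eps\le\delta$ when $\eps\le\eps_\delta$. It is convenient to replace $f$ by the truncation $\tilde f(s)=f(\min\{s,\delta\})$. On $[0,\delta]$ this is locally Lipschitz, and it satisfies $\tilde f(s)\le C_f s^p$ there. For the Lipschitz estimate we need $|f(\xi)-f(\zeta)|\lesssim(\xi^{p-1}+\zeta^{p-1})|\xi-\zeta|$ on $[0,\delta]$. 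This does not follow from \eqref{eq:sandwich} alone.

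So I would avoid contraction. Instead I build the solution by monotone iteration $v_{k+1}=\eps P_tu_0+\int_0^th P_{t-s}f(v_k)\dd s$, starting from $v_0=0$. This sequence is nondecreasing because $f$ is nondecreasing. Then I compare with the power problem with coefficient $C_fh$: let $U$ be its fixed point, given by Theorem~\ref{thm:master}-(a) applied to $C_fh$. That theorem requires $\eps^{p-1}C_f\Hp(T)\le c_{\rm fp}$, and it gives $U\le C\eps\le\delta$. By induction $v_k\le U$, since $f(v_k)\le C_fv_k^p\le C_fU^p$ as long as $v_k\le\delta$. The limit $v=\sup_kv_k$ is then a bounded mild solution on $[0,T)$; it lies in $C([0,T);L^1)$ by Lemma~\ref{lem:duhamel-cont}. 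Uniqueness (Lemma~\ref{lem:local}-(i) for $f$) then gives $T^f_\eps\ge T$. The consequence $\Hp(T^f_\eps)\ge(c_{\rm fp}/C_f)\eps^{-(p-1)}$ follows by continuity of $\Hp$, exactly as in the proof of Theorem~\ref{thm:master}-(ii). Item (iii) is the same argument with $T=\infty$.

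\emph{(ii).} Here I redo Subsection~\ref{subsec:upper} for a mild solution of \eqref{eq:main-f}. The identity \eqref{eq:FR-identity} holds with $u^p$ replaced by $f(u)\ge c_0u^p$. This gives $F_R'\ge c_0h\int u^pK_R$, and Jensen then yields \eqref{eq:FR-ode} with $h$ replaced by $c_0h$. I define the energy $E$ with $f(u)$ in place of $u^p$, or equivalently with $c_0u^p$; either way its lower bound \eqref{eq:F-lower} is unchanged. The increment \eqref{eq:E-increment} holds with $\Lambda$ built from $c_0h$, since the left side is an integral of $f(u)$ bounded above by the energy increment. The cap \eqref{eq:cap} uses only the ODE inequality.

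The key point is that every step uses only the differential inequality and the lower bound $f\ge c_0 s^p$. Hence Steps 4--5 go through verbatim with $h$ replaced by $c_0h$. This gives $\int_{T_1}^\tau c_0h\,t^{-\alpha}\le C_{\rm up}\eps^{-(p-1)}$, which is the claimed bound. In particular, if $\Hp(\infty)=\infty$ then $T^f_\eps<\infty$.

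\emph{The main obstacle.} The delicate step is the energy increment in the proof of (ii). There the right-hand side involves $u^p$ while the left-hand side involves $f(u)$. I handle it by defining $E$ directly with $c_0u^p$: the left-hand side $F_R(bR^2)-F_R(aR^2)=\int h f(u)K_R\ge c_0\int hu^pK_R$ is then bounded above by the increment of that energy only if the inequality runs the right way. It does not, so I would instead bound $F_R$ increments from below by the $c_0u^p$ version and above by nothing at all. The telescoping then has to be carried out on $F_R$ directly, as the sequence $Y_j$, and this is where I would check the argument carefully. The examples follow by checking \eqref{eq:sandwich}, and Theorems~\ref{thm:powerlog}--\ref{thm:fast} follow by \eqref{eq:G-transfer}.
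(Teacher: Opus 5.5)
Your items (i) and (iii) are the paper's argument: monotone iteration for $f$, dominated by the solution of the power problem with coefficient $C_fh$, which Theorem~\ref{thm:master}-(a) keeps below $C\eps\le\delta$.

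For (ii) you take a different route, and your write-up argues against itself. The version in your (ii) paragraph is correct provided the energy is built from the actual reaction density,
\[
 E^f(R)=\int_0^{R^2}\!\!\int h\,f(u)\,e^{-A\CC(0,x)^2/R^2}\dd x\dd t .
\]
With this choice, the identity \eqref{eq:FR-identity}, the lower bound \eqref{eq:F-lower} and the upper half $F_R(bR^2)-F_R(aR^2)\le C_{\rm e}[E^f(\ell R)-E^f(rR)]$ of the increment all involve the same integrand $hf(u)$, so they hold verbatim. The power enters only through $F_R'=h\int f(u)K_R\ge c_0hR^{-q(p-1)}F_R^p$, which comes from $f\ge c_0s^p$ and Jensen. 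That inequality gives $F_R(bR^2)-F_R(aR^2)\ge c_0c_F^p\Lambda(R)[\eps+E^f(rR)]^p$ and the cap. Hence \eqref{eq:E-increment} and \eqref{eq:cap} hold with $\Lambda$ replaced by $c_0\Lambda$. Steps 4--5 then give $c_0\int_{T_1}^\tau h\,t^{-\alpha}\dd t\le C_{\rm up}\eps^{-(p-1)}$, which is (ii).

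So there is no mismatch to resolve, and your ``main obstacle'' paragraph is a wrong turn. Putting $c_0u^p$ into $E$ is not equivalent: it destroys the upper half of the increment, as you then notice. The fallback you propose, telescoping on $F_R$ itself with no upper bound, does not work. Windows at different scales are different functionals. The link between them that the envelope bounds provide, $F_{\mu R}(t')\ge (c_{\rm e}/C_{\rm e})F_R(t)$ for $t\le t'$ (from $u(t')\ge P_{t'-t}u(t)$), carries a fixed factor $c_{\rm e}/C_{\rm e}<1$ per scale. A recursion $Y_{j+1}\ge\kappa Y_j(1+z_j)$ with $\kappa<1$ gives nothing after many scales. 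The monotone energy exists precisely to carry the accumulated reaction from one scale to the next without loss, so commit to $E^f$.

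The paper avoids reopening Subsection~\ref{subsec:upper} altogether. Since $f\ge c_0s^p$, the monotone iterates of $v_t-\Lcal v=c_0hv^p$, $v(0)=\eps u_0$, stay below $u^f$. So this power solution exists as long as $u^f$ does, i.e.\ $T^f_\eps\le T^\sharp$. Theorem~\ref{thm:master}-(ii) for $c_0h$ gives $T^\sharp<\infty$ when $\Hp(\infty)=\infty$, and \eqref{eq:upper-large} for $v$ at $\tau<T^f_\eps$ gives the bound. This two-line comparison uses Theorem~\ref{thm:master}-(b) as a black box. Your direct route yields the same constants and shows that the upper-bound machinery needs only the differential inequality for $F_R$, but for this statement it buys nothing over the comparison.
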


\begin{proof}
(i) Apply Theorem~\ref{thm:master}-(a) to the coefficient $C_f h$: there is a
mild solution $v$ of $v_t-\Lcal v=C_f hv^p$ on $[0,T)$ with
$\norm v_\infty\le C\eps\le\delta$. The monotone iterates $w_k$ of
\eqref{eq:main-f} satisfy $w_k\le v$, because $f(v)\le C_f v^p$ for
$v\le\delta$ and $f$ is nondecreasing. Their limit is a mild solution of
\eqref{eq:main-f} on $[0,T)$ (as in Lemma~\ref{lem:comparison}).

(ii) Let $v$ solve $v_t-\Lcal v=c_0hv^p$, $v(0)=\eps u_0$, with lifespan
$T^\sharp<\infty$ (Theorem~\ref{thm:master}-(ii) for $c_0h$). Since
$f\ge c_0s^p$, the iterates satisfy $v_k\le w_k\le u^f$ as long as $u^f$ exists,
so $T^f_\eps\le T^\sharp$. Applying \eqref{eq:upper-large} to $v$, with the
coefficient $c_0h$ and $\tau<T^f_\eps$, and then letting $\tau\uparrow T^f_\eps$,
gives the bound in (ii).

(iii) is (i) with $T=\infty$.
\end{proof}

For continuous $h>0$, qualitative blow-up and global-existence criteria for
sums of powers and for exponential nonlinearities are given in
\cite[Sect.~7]{MeglioliPunzo2026}; Theorem~\ref{thm:sandwich} adds the
two-sided lifespan law, for an arbitrary $h\ge0$ in $L^1_{\rm loc}$.

\subsection{A lower bound for general \texorpdfstring{$f$}{f}}

The heuristic of Subsection~\ref{subsec:heuristic} suggests the right quantity.
At time $s$ the linear flow has size $\eps(1+s)^{-q/2}$ on a set of volume
$(1+s)^{q/2}$, so the Duhamel integrand has mass about
$(1+s)^{q/2}f(\eps(1+s)^{-q/2})$; for $f=s^p$ this is $\eps^p(1+s)^{-\alpha}$.

\begin{theorem}\label{thm:Hf-lower}
Assume (F). There are $A,\kappa>0$ depending only on $X$ and $\mathcal U_0$ such
that if
\begin{equation}\label{eq:Hf-cond}
 \int_0^Th(s)(1+s)^{q/2}f\bigl(A\eps(1+s)^{-q/2}\bigr)\dd s\le\kappa\eps,
\end{equation}
then $T^f_\eps\ge T$ and $\sup_{t<T}\{\norm{u(t)}_1+(1+t)^{q/2}\norm{u(t)}_\infty\}\le A\eps$.
\end{theorem}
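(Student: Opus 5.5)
We follow the fixed-point argument of Subsection~\ref{subsec:lower}, now run on a ball instead of through a contraction estimate with a power. Let $\mathcal X_T$ and $C_{\rm L}$ be as there, set $A:=2C_{\rm L}\mathcal U_0$, and consider the closed set
\[
 \mathcal B=\{u\in\mathcal X_T:\ u\ge0,\ \norm u_{\mathcal X_T}\le A\eps\}.
\]
Rather than a contraction (which would require a Lipschitz bound on $f$ of power type, unavailable for general $f$), we use the monotone iteration $w_0=0$, $w_{k+1}=\eps P_tu_0+\int_0^th(s)P_{t-s}f(w_k(s))\dd s$. Since $f$ is nondecreasing under (F) and $P_t$ is positive, $0\le w_k\le w_{k+1}$. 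It therefore suffices to show that $\Phi$ maps $\mathcal B$ into itself. The limit $w=\sup_kw_k$ then lies in $\mathcal B$ by monotone convergence and solves the Duhamel equation. It lies in $C([0,T);L^1)$ by Lemma~\ref{lem:duhamel-cont}, because $f(w)$ is bounded in $L^1$ (see below), so it is the maximal mild solution on $[0,T)$ by Lemma~\ref{lem:local}-(i).

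The key pointwise bound comes from the monotonicity of $f(s)/s$. For $u\in\mathcal B$ write $m(s):=A\eps(1+s)^{-q/2}\ge u(s,x)$. Then
\[
 f(u(s,x))=\frac{f(u)}{u}\,u\le\frac{f(m(s))}{m(s)}\,u(s,x)\quad\text{and}\quad f(u(s,x))\le f(m(s)).
\]
The first inequality gives
\[
 \norm{f(u(s))}_1\le\frac{f(m(s))}{m(s)}\,A\eps=(1+s)^{q/2}f(m(s)).
\]
The second gives $\norm{f(u(s))}_\infty\le f(m(s))=(1+s)^{-q/2}\cdot(1+s)^{q/2}f(m(s))$. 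These are exactly the analogues of \eqref{eq:up-bounds}, with $\norm u^p_{\mathcal X_T}(1+s)^{-\alpha}$ replaced by $g(s):=(1+s)^{q/2}f(m(s))$. The same bounds show that $f(w_k)$ is bounded in $L^1$ and $L^\infty$.

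Next we repeat Steps 2--3 of Subsection~\ref{subsec:lower} verbatim. In $L^1$, contractivity gives $\norm{\mathcal N(u)(t)}_1\le\int_0^Thg$. In weighted $L^\infty$, for $t>2$ we split at $t/2$. On $[0,t/2]$ we use \eqref{eq:L1Linfty} from $L^1$ to get $Ct^{-q/2}\int hg$. On $[t/2,t]$ we use $L^\infty$-contractivity, giving $(1+t/2)^{-q/2}\int hg$. The case $t\le2$ is trivial. Altogether
\[
 \norm{\Phi u}_{\mathcal X_T}\le C_{\rm L}\mathcal U_0\eps+C_{\rm N}\int_0^Th g\dd s\le\tfrac A2\eps+C_{\rm N}\kappa\eps ,
\]
by \eqref{eq:Hf-cond}. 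Choosing $\kappa=A/(2C_{\rm N})$, which depends only on $X$ and $\mathcal U_0$, gives $\Phi(\mathcal B)\subset\mathcal B$, and inductively $w_k\in\mathcal B$.

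The only delicate point is the one just handled. Without a Lipschitz estimate on the ball we cannot use Banach's theorem, and the monotone iteration together with uniqueness from Lemma~\ref{lem:local}-(i) replaces it. We must also make sure that the bound $f(u)\le (f(m)/m)\,u$ uses only $u\le m$ pointwise, which holds a.e.\ by the weighted $L^\infty$ part of the norm. The case $u(s,x)=0$ is covered by $f(0)=0$.
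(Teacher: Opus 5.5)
Your proposal is correct and follows the paper's proof essentially step for step. You use the same $A=2C_{\rm L}\mathcal U_0$ and the same pointwise bound $f(w)\le w\,f(m)/m$ from the monotonicity of $f(s)/s$ as a substitute for \eqref{eq:up-bounds}, the same $\kappa=A/(2C_{\rm N})$, and the same monotone iteration in place of a contraction, closed by Lemma~\ref{lem:local}-(i).
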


\begin{proof}
Let $A=2C_{\rm L}\mathcal U_0$, $B=A\eps$, $m(s)=B(1+s)^{-q/2}$ and
$g(s)=(1+s)^{q/2}f(m(s))$. If $\norm w_{\mathcal X_T}\le B$ then, as
$f(\xi)/\xi$ is nondecreasing, $f(w)\le w\,f(m)/m$, so
$\norm{f(w(s))}_1\le g(s)$ and $\norm{f(w(s))}_\infty\le(1+s)^{-q/2}g(s)$. These
bounds replace \eqref{eq:up-bounds}, and the computation of
Subsection~\ref{subsec:lower} gives
$\norm{\mathcal D(w)}_{\mathcal X_T}\le C_{\rm N}\int_0^Thg\dd s$ for
$\mathcal D(w)=\int_0^th\,P_{t-s}f(w)\dd s$. With $\kappa=A/(2C_{\rm N})$ the
monotone iteration $w_0=0$, $w_{k+1}=\eps P_tu_0+\mathcal D(w_k)$ stays in the
ball of radius $B$ and increases to a mild solution.
\end{proof}

For $f=s^p$, \eqref{eq:Hf-cond} is Theorem~\ref{thm:master}-(a). It should be
compared with the integral condition on $f$ in
\cite[Thm.~4.2]{ChatzakouKassymovRuzhansky2026}.
If $\lim_{\lambda\to0}\sup_\sigma
f(\lambda\sigma)/(\lambda f(\sigma))=0$ and
$\int_0^\infty h(1+s)^{q/2}f(\nu_0(1+s)^{-q/2})\dd s<\infty$ for some $\nu_0>0$,
then the left side of \eqref{eq:Hf-cond} is $o(\eps)$ at $T=\infty$ \textup{(}take
$\lambda=A\eps/\nu_0$ and $\sigma=\nu_0(1+s)^{-q/2}$\textup{)}, so small data
are global.

\begin{example}\label{ex:logf}
Let $h\equiv1$, $p=p_F$ (so $\alpha=1$) and $f(s)=s^p[\log(e+1/s)]^{-\gamma}$,
$\gamma>0$, which satisfies (F) but is not comparable to a power at $0$. With
$\ell=\log\frac1{A\eps}$ the left side of \eqref{eq:Hf-cond} is at most
$(A\eps)^p\int_0^{\log(1+T)}(\ell+\frac q2y)^{-\gamma}\dd y$. Hence, as
$\eps\to0$: for $\gamma>1$ small data are global (while $\gamma=0$ has finite
lifespan); for $\gamma=1$, $\log\log T_\eps\ge c\,\eps^{-(p-1)}$; for
$0<\gamma<1$, $\log T_\eps\ge c\,\eps^{-(p-1)/(1-\gamma)}$. Only lower bounds are
proved: this $f$ satisfies \eqref{eq:sandwich} for no exponent, so
Theorem~\ref{thm:sandwich} does not apply.
\end{example}

\subsection{A single-window upper bound for convex \texorpdfstring{$f$}{f}}

Let $J(s)=\int_s^\infty d\sigma/f(\sigma)$, which is finite exactly under
Osgood's condition; $J(y_0)$ is the blow-up time of the ordinary differential
equation $y'=f(y)$, $y(0)=y_0$. For continuous $h>0$ and continuous data, the
next proposition follows from the single-time criterion
\cite[Thm.~3.3]{MeglioliPunzo2026} (see also \cite{LenzSchmidtZimmermann2023}
for $h\equiv1$), $\norm{P_Tu_0}_\infty\ge J^{-1}\bigl(\int_0^Th\bigr)$, combined
with the lower Gaussian bound for $P_Tu_0$; we include the short proof since it
allows $h\in L^1_{\rm loc}$.

\begin{proposition}\label{prop:window}
Let $f$ be convex, $f(0)=0$, $f>0$ on $(0,\infty)$, $\int^\infty ds/f<\infty$.
There is $c_1>0$ such that for all $T,\eps>0$
\[
 J\bigl(c_1\eps(1+T)^{-q/2}\bigr)\le\int_0^Th\dd t\ \Longrightarrow\ T^f_\eps\le T .
\]
\end{proposition}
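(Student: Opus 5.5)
The plan is to rerun the single-window argument from the proof of Theorem~\ref{thm:threshold}-(iii), with $u^p$ replaced by $f(u)$ and Jensen's inequality applied to the convex $f$. Fix $T>0$ and suppose, for contradiction, that $T^f_\eps>T$. Take the backward-semigroup functional with the window closing exactly at $T$:
\[
 G(t):=[P_{T-t}u(t)](0)=\int\Gamma_{T-t}(0,x)u(t,x)\dd x,\qquad 0\le t<T .
\]
This functional needs no extra scale parameter. As in \eqref{eq:FR-identity}, we apply $P_{T-t}$ to the mild formula (with $u^p$ replaced by $f(u)$), evaluate at $0$, and use the semigroup law. This gives
\[
 G(t)=\eps[P_Tu_0](0)+\int_0^th(s)\int\Gamma_{T-s}(0,x)f(u(s,x))\dd x\dd s .
\]
So $G$ is absolutely continuous and nondecreasing on $[0,T)$. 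Since $\Gamma_{T-s}(0,\cdot)$ is a probability density, Jensen's inequality for convex $f$ gives $G'\ge h\,f(G)$ a.e. We use evaluation at $0$ only for $t<T$, where the kernel is bounded; this is the same justification as in Theorem~\ref{thm:threshold}-(iii).

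Next we integrate the differential inequality. Because $f>0$ on $(0,\infty)$ and $G\ge G(0)>0$, we have $\frac{\dd}{\dd t}J(G(t))=-G'/f(G)\le-h$ a.e. Integrating over $[0,t]$ gives $J(G(t))\le J(G(0))-\int_0^th$. Since $J>0$, this yields $\int_0^th<J(G(0))$ for every $t<T$. Letting $t\uparrow T$ gives $\int_0^Th\le J(G(0))$. To get a strict contradiction we will instead extend the window slightly, replacing $T$ by $T'>T$ with $T'<T^f_\eps$. Then $\int_0^{T}h< J(G_{T'}(0))$, and by monotonicity of $J$ we can pass $T'\downarrow T$ in the lower bound for $G_{T'}(0)$.

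It remains to bound $G(0)=\eps[P_Tu_0](0)$ from below. By the lower Gaussian bound \eqref{eq:origin-gaussian},
\[
 [P_Tu_0](0)\ge c_{\rm o}T^{-q/2}\int e^{-A_{\rm o}\CC(0,y)^2/T}u_0(y)\dd y .
\]
For $T\ge1$ the integral is at least $\int e^{-A_{\rm o}\CC(0,y)^2}u_0\dd y>0$, a constant depending only on $u_0$ and $X$. For $T\le1$ we use instead $(T+1)$-type monotonicity: it is cleaner to note that for $T\le1$ the bound $T^{-q/2}\ge 1$ together with the same integral estimate at time $T$ still gives a positive lower bound, since $e^{-A_{\rm o}\CC^2/T}\ge e^{-A_{\rm o}\CC^2/T}$ is only a problem for small $T$. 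The honest fix for small $T$ is to use lower semicontinuity and positivity: $T\mapsto T^{q/2}[P_Tu_0](0)$ is continuous and positive on $(0,1]$. If it degenerates as $T\to0$, we bound it below using $\int_{B_X(0,1)}u_0>0$ after translating the centre, as in the remark following the proof of Theorem~\ref{thm:threshold}.

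Either way we obtain $G(0)\ge c_1\eps(1+T)^{-q/2}$, and since $J$ is decreasing, $J(G(0))\le J(c_1\eps(1+T)^{-q/2})$. Hence $T^f_\eps>T$ forces $\int_0^Th<J(c_1\eps(1+T)^{-q/2})$, which is the contrapositive of the claim.

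The main obstacle is exactly this uniform lower bound on $(1+T)^{q/2}[P_Tu_0](0)$ for all $T>0$, including small $T$, where the value at the single point $0$ need not be comparable to $\eps$ if $u_0$ vanishes near the origin. If this fails, I would choose the centre $x_0$ as a Lebesgue point where $u_0(x_0)>0$ and run the argument around $x_0$, taking $c_1$ to depend on $u_0$.
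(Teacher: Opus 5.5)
Your overall scheme is the paper's: the backward-semigroup functional, Jensen's inequality for convex $f$, and integrating $(J(G))'\le-h$. The gap is the step you flag as the main obstacle. With the centre at $0$ and the window closing at $T$ (or at some $T'$ slightly above $T$), this step is false in general. Here $G(0)=\eps[P_Tu_0](0)$ has no lower bound $c_1\eps(1+T)^{-q/2}$ uniform in small $T$. For instance, if $u_0=0$ on $\BX(0,2)$, the upper bound in \eqref{eq:origin-gaussian} gives $[P_Tu_0](0)\le C_{\rm o}T^{-q/2}e^{-4a_{\rm o}/T}\norm{u_0}_1\to0$ as $T\to0$.

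Your paragraph on $T\le1$ does not repair this. The inequality $e^{-A_{\rm o}\CC^2/T}\ge e^{-A_{\rm o}\CC^2/T}$ is vacuous, and continuity and positivity on $(0,1]$ give no bound as $T\to0$. Moving the centre to a density point $x_0$ of $\{u_0>0\}$ could be made to work. It would need the lower Gaussian bound, doubling, Lebesgue differentiation on $\CC$-balls, and a separate large-$T$ volume bound at $x_0$. But you leave this as a contingency, not an argument.

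The missing idea is that the closing time of the window is a free parameter and need not be near $T$. Jensen only uses that $\Gamma_{T'-t}(0,\cdot)$ is a probability density, and $G$ is evaluated only on $[0,T]$, where $u$ exists. So any $T'>T$ is admissible, and your restriction $T'<T^f_\eps$ is unnecessary.

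The paper takes $T'=\omega R^2$ with $R=\max\{R_0,\sqrt{2T/\omega}\}$. For small $T$ the window then closes at the fixed time $\omega R_0^2$. In all cases $G(0)\ge c_{\rm s}\eps R^{-q}\ge c_1\eps(1+T)^{-q/2}$, because $R\le C_0(1+T)^{1/2}$. This is just your $T\ge1$ computation, carried out at a closing time $T'\ge1$ with $T'\le C(1+T)$.

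The same choice gives the strict inequality for free. Since $G(T)<\infty$, we have $J(G(T))>0$, so $\int_0^Th\le J(G(0))-J(G(T))<J(G(0))$. Your limit $T'\downarrow T$ returns only $\int_0^Th\le J(c_1\eps(1+T)^{-q/2})$, since $J(c_1\eps(1+T')^{-q/2})$ decreases to $J(c_1\eps(1+T)^{-q/2})$. That point is minor and could be fixed by shrinking $c_1$.
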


\begin{inwords}
The solution blows up before time $T$ as soon as the total amount of reaction
$\int_0^Th$ exceeds the ODE blow-up time starting from the height
$c_1\eps(1+T)^{-q/2}$ of the linear flow at time $T$.
\end{inwords}

\begin{proof}
If $T^f_\eps>T$, take $R=\max\{R_0,\sqrt{2T/\omega}\}\le C_0(1+T)^{1/2}$ and
$G(t)=P_{\omega R^2-t}u(t)(0)$. As in Step~1 of Subsection~\ref{subsec:upper},
$G'=h\int\Gamma_{\omega R^2-t}(0,y)f(u)\dd y\ge h\,f(G)$ by Jensen's inequality,
and $G(0)\ge c_{\rm s}\eps R^{-q}>0$. Integrating $(-J(G))'\ge h$ gives
$J(G(0))>\int_0^Th$, and $J$ is decreasing with
$c_{\rm s}R^{-q}\ge c_1(1+T)^{-q/2}$, $c_1=c_{\rm s}C_0^{-q}$.
\end{proof}

For $f=s^p$ this reads $\int_0^Th\ge c\,\eps^{-(p-1)}(1+T)^\alpha$, which is
sharp when the weighted integral is dominated by late times (e.g.\ $h\equiv1$,
$p<p_F$), but says nothing at the critical power, where the nonlinear effect
accumulates over all dyadic scales.

\section*{Acknowledgments}
MR was supported by the Methusalem programme of the Ghent University Special Research Fund (BOF) (Grant number 01M01021), by the FWO Senior Research Grant G083525N and the UKRI EPSRC grant UKRI3645.
BT was supported by the Science Committee of the Ministry of Science and Higher Education of the Republic of Kazakhstan (Grant No. BR31714735).

\section*{Declaration on the use of generative AI.}
The authors used Claude (Anthropic) for language editing, manuscript presentation, and checking mathematical arguments. All AI-assisted content was independently verified, and the authors take full responsibility for the manuscript.

\section*{Declaration of competing interest}
The authors declare that there is no conflict of interest.

\section*{Data Availability Statements}
The manuscript has no associated data.

\end{document}